\newcommand{\email}[1]{\href{mailto:#1}{\nolinkurl{#1}}}
\newlength{\mySubFigSize}
\definecolor{labelkey}{rgb}{0,0.08,0.45}
\definecolor{refkey}{rgb}{0,0.6,0.0}
\definecolor{Brown}{rgb}{0.45,0.0,0.05}
\definecolor{dgreen}{rgb}{0.00,0.49,0.00}
\definecolor{dblue}{rgb}{0,0.08,0.75}
\newcommand{\Argmind}[2]{\ensuremath{\underset{\substack{{#1}}}%
{\text{Argmin}}\;\;#2}}
\renewcommand{\leq}{\ensuremath{\leqslant}}
\renewcommand{\geq}{\ensuremath{\geqslant}}
\newcommand{\minimize}[2]{\ensuremath{\underset{\substack{{#1}}}%
{\text{minimize}}\;\;#2 }}
\newcommand{\scal}[2]{{\langle{{#1}\mid{#2}}\rangle}}
\newcommand{\menge}[2]{\big\{{#1}~\big |~{#2}\big\}}
\newcommand{\HHH}{\ensuremath{\boldsymbol{\mathcal H}}}
\newcommand{\Argmin}{\ensuremath{\text{Argmin}}}
\newcommand{\HH}{\ensuremath{{\mathcal H}}}
\newcommand{\GG}{\ensuremath{{\mathcal G}}}
\newcommand{\KK}{\ensuremath{{\mathcal K}}}
\newcommand{\emp}{\ensuremath{{\varnothing}}}
\newcommand{\Id}{\ensuremath{\operatorname{Id}}\,}
\newcommand{\Idi}{\ensuremath{\operatorname{Id}_i}\,}
\newcommand{\RR}{\ensuremath{\mathbb{R}}}
\newcommand{\BL}{\ensuremath{\EuScript B}}
\newcommand{\PL}{\ensuremath{\EuScript P}}
\newcommand{\SL}{\ensuremath{\EuScript S}}
\newcommand{\KL}{\ensuremath{\EuScript K}}
\newcommand{\VL}{\ensuremath{\EuScript V}}
\newcommand{\TL}{\ensuremath{\EuScript T}}
\newcommand{\ML}{\ensuremath{\EuScript M}}
\newcommand{\JL}{\ensuremath{\EuScript J}}
\newcommand{\NL}{\ensuremath{\EuScript N}}
\newcommand{\FL}{\ensuremath{\EuScript F}}
\newcommand{\RPP}{\ensuremath{\left]0,+\infty\right[}}
\newcommand{\RX}{\ensuremath{\left]-\infty,+\infty\right]}}
\newcommand{\RXX}{\ensuremath{\left[-\infty,+\infty\right]}}
\newcommand{\NN}{\ensuremath{\mathbb N}}
\newcommand{\intdom}{\ensuremath{\text{int\,dom}\,}}
\newcommand{\exi}{\ensuremath{\exists\,}}
\newcommand{\ran}{\ensuremath{\text{\rm ran}\,}}
\newcommand{\zer}{\ensuremath{\text{\rm zer}\,}}
\newcommand{\pinf}{\ensuremath{{+\infty}}}
\newcommand{\minf}{\ensuremath{{-\infty}}}
\newcommand{\dom}{\ensuremath{\text{\rm dom}\,}}
\newcommand{\prox}{\ensuremath{\text{\rm prox}}}
\newcommand{\proj}{\ensuremath{\text{\rm proj}}}
\newcommand{\Fix}{\ensuremath{\text{\rm Fix}\,}}
\newcommand{\gra}{\ensuremath{\text{\rm gra}\,}}
\newcommand{\inte}{\ensuremath{\text{\rm int}}}
\newcommand{\sri}{\ensuremath{\text{\rm sri}\,}}
\newcommand{\infconv}{\ensuremath{\mbox{\small$\,\square\,$}}}
\newcommand{\pushfwd}{\ensuremath{\mbox{\Large$\,\triangleright\,$}}}
\newcommand{\zeroun}{\ensuremath{\left]0,1\right[}}   
\newcommand{\rzeroun}{\ensuremath{\left]0,1\right]}}
\newtheorem{theorem}{Theorem}[section]
\newtheorem{lemma}[theorem]{Lemma}
\newtheorem{corollary}[theorem]{Corollary}
\newtheorem{proposition}[theorem]{Proposition}
\theoremstyle{plain}{\theorembodyfont{\rmfamily}%
}
\theoremstyle{plain}{\theorembodyfont{\rmfamily}%
\newtheorem{example}[theorem]{Example}}
\theoremstyle{plain}{\theorembodyfont{\rmfamily}%
\newtheorem{remark}[theorem]{Remark}}
\theoremstyle{plain}{\theorembodyfont{\rmfamily}%
}
\theoremstyle{plain}{\theorembodyfont{\rmfamily}%
}
\theoremstyle{plain}{\theorembodyfont{\rmfamily}%
}
\theoremstyle{plain}{\theorembodyfont{\rmfamily}%
}
\theoremstyle{plain}{\theorembodyfont{\rmfamily}%
\newtheorem{problem}[theorem]{Problem}}
\numberwithin{equation}{section}
\begin{document}

\title{\sffamily\huge\vskip -13mm Monotone Operator Theory in Convex
Optimization\thanks{Contact 
author: P. L. Combettes, \email{plc@math.ncsu.edu},
phone:+1 (919) 515 2671. This work was supported by 
the National Science Foundation under grant CCF-1715671.}}

\author{Patrick L. Combettes\\
\small North Carolina State University,
Department of Mathematics,
Raleigh, NC 27695-8205, USA\\
}

\date{~}

\maketitle

\vskip -3mm

{\bfseries Abstract.} 
Several aspects of the interplay between monotone operator theory
and convex optimization are presented. The crucial role played by
monotone operators in the analysis and the numerical solution of
convex minimization problems is emphasized. We review the
properties of subdifferentials as maximally monotone operators
and, in tandem, investigate those of proximity operators as
resolvents. In particular, we study new transformations which map
proximity operators to proximity operators, and establish
connections with self-dual classes of firmly nonexpansive
operators. In addition, new insights and developments are 
proposed on the algorithmic front.

{\bfseries Keywords.}
Firmly nonexpansive operator,
monotone operator,
operator splitting, 
proximal algorithm,
proximity operator,
proximity-preserving transformation,
self-dual class,
subdifferential.

\section{Introduction and historical overview}
\label{sec:1}
In this paper, we examine various facets of the role of monotone
operator theory in convex optimization and of the interplay
between the two fields. Throughout, $\HH$ is a real Hilbert space
with scalar product $\scal{\cdot}{\cdot}$, associated norm
$\|\cdot\|$, and identity operator $\Id\!$. To put our
discussion in proper perspective, we first provide an
historical account and highlight some key results (see
Section~\ref{sec:2} for notation).

Monotone operator theory is a fertile area of nonlinear analysis
which emerged in 1960 in independent papers by 
Ka{\v{c}}urovski{\u\i}, Minty, and Zarantonello.  
Let $D$ be a nonempty
subset of $\HH$, let $A\colon D\to\HH$, and let $B\colon
D\to\HH$. Extending the ordering of functions on the
real line which results from the comparison of their increments, 
Zarantonello \cite{Zara60} declared $B$ is slower than $A$ if
\begin{equation}
\label{e:zarantonello1}
(\forall x\in D)(\forall y\in D)\quad
\scal{x-y}{Ax-Ay}\geq\scal{x-y}{Bx-By},
\end{equation}
which is denoted by $A\succcurlyeq B$. He then called $A$ 
(isotonically) monotone if $A\succcurlyeq 0$, that is,
\begin{equation}
\label{e:zarantonello5}
(\forall x\in D)(\forall y\in D)\quad
\scal{x-y}{Ax-Ay}\geq 0,
\end{equation}
and supra-unitary if $A\succcurlyeq\Id$. An instance of the 
latter notion can be found in \cite{Golo35}. In modern 
language, it corresponds to that of $1$-strong monotonicity.
An important result of \cite{Zara60} is the following.

\begin{theorem}[Zarantonello]
\label{t:zarantonello1}
Let $A\colon\HH\to \HH$ be monotone and Lipschitzian. Then 
$\ran(\Id+A)=\HH$.
\end{theorem}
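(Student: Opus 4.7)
The plan is to prove surjectivity by a direct fixed-point argument. Fix $z\in\HH$; it suffices to exhibit $x\in\HH$ with $x+Ax=z$. Introducing a relaxation parameter $\gamma>0$ (to be tuned), this is equivalent to $x$ being a fixed point of
\begin{equation*}
T_\gamma\colon\HH\to\HH\colon x\mapsto x-\gamma(x+Ax-z)=(1-\gamma)x-\gamma Ax+\gamma z.
\end{equation*}
Since $\HH$ is a complete metric space, the Banach contraction principle will deliver such a fixed point as soon as $T_\gamma$ is strictly contractive for some admissible $\gamma$.

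To produce the contraction, I would expand, for arbitrary $x,y\in\HH$,
\begin{equation*}
\|T_\gamma x-T_\gamma y\|^2=(1-\gamma)^2\|x-y\|^2-2\gamma(1-\gamma)\scal{x-y}{Ax-Ay}+\gamma^2\|Ax-Ay\|^2,
\end{equation*}
then invoke the two hypotheses on $A$: monotonicity \eqref{e:zarantonello5} makes the middle inner product nonnegative (so the cross term is $\leq 0$ whenever $0\leq\gamma\leq 1$), and the Lipschitz property $\|Ax-Ay\|\leq L\|x-y\|$ bounds the last term. This yields
\begin{equation*}
\|T_\gamma x-T_\gamma y\|^2\leq\bigl((1-\gamma)^2+\gamma^2 L^2\bigr)\|x-y\|^2.
\end{equation*}
The quadratic $\gamma\mapsto(1-\gamma)^2+\gamma^2 L^2$ equals $1$ at $\gamma=0$ and has derivative $-2$ there, so for any $\gamma\in\bigl(0,\min\{1,2/(1+L^2)\}\bigr)$ the bracketed factor is strictly less than $1$, and $T_\gamma$ is a strict contraction on $\HH$. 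Its unique fixed point $x^\star$ satisfies $x^\star+Ax^\star=z$, so $z\in\ran(\Id+A)$; since $z$ was arbitrary, $\ran(\Id+A)=\HH$.

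The argument is essentially mechanical once the map $T_\gamma$ is written down; the only delicate point is choosing $\gamma$ so that both the sign condition $\gamma(1-\gamma)\geq 0$ (needed to discard the monotonicity term) and the contraction condition $(1-\gamma)^2+\gamma^2L^2<1$ are simultaneously in force, which is the reason for taking the minimum of $1$ and $2/(1+L^2)$. No continuation or degree-theoretic machinery is required, because the domain of $A$ is the whole space and Banach's theorem applies globally.
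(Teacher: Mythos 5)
Your argument is correct and complete: the expansion of $\|T_\gamma x-T_\gamma y\|^2$ is right, monotonicity kills the cross term for $\gamma\in[0,1]$, the Lipschitz bound controls the quadratic term, and the choice $\gamma\in\bigl]0,\min\{1,2/(1+L^2)\}\bigr[$ makes $T_\gamma$ a strict contraction on the complete space $\HH$, so Banach's theorem yields the required preimage of each $z$. The paper itself states this theorem without proof, citing Zarantonello's 1960 report; your proof is in fact the classical one — it is precisely the ``contractive averaging'' device after which that report is named — so there is nothing to reconcile between your route and the paper's.
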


Monotonicity captures two well-known concepts. First, if
$\HH=\RR$, a function $A\colon D\to\RR$ is monotone if and only
if it is increasing, that is,
\begin{equation}
\label{e:zarantonello2}
(\forall x\in D)(\forall y\in D)\quad x<y\quad\Rightarrow\quad
Ax\leq Ay.
\end{equation}
The second connection is with linear functional analysis: if 
$A\colon\HH\to\HH$ is linear and bounded, then it is monotone if
and only if it is positive, that is,
\begin{equation}
\label{e:zarantonello3}
(\forall x\in\HH)\quad \scal{x}{Ax}\geq 0. 
\end{equation}
In particular, if a bounded linear operator $A\colon\HH\to\HH$ is 
skew, that is $A^*=-A$, then it is monotone since 
\begin{equation}
\label{e:zarantonello7}
(\forall x\in\HH)\quad \scal{x}{Ax}= 0. 
\end{equation}
Regarding \eqref{e:zarantonello2}, a standard fact about
a differentiable convex function on an open interval of $\RR$ is
that its derivative is increasing. This property, which is 
already mentioned in Jensen's 1906 foundational paper
\cite{Jens06}, was extended in 1960 by
Ka{\v{c}}urovski{\u\i} \cite{Kach60}, who came up with the notion
of monotonicity \eqref{e:zarantonello5}, discussed strong
monotonicity, and observed that the
gradient of a differentiable convex function $f\colon\HH\to\RR$ 
is monotone (see also \cite{Vain59}).
In a paper submitted in 1960, Minty \cite{Mint62} also called 
$A\colon D\to\HH$ monotone if it satisfies
\eqref{e:zarantonello5}, and maximally monotone if it cannot be
extended to a strictly larger domain while preserving
\eqref{e:zarantonello5}. Although, strictly speaking, his
definitions dealt with single-valued operators, he established
results on monotone relations that naturally cover extensions
to what we now call set-valued operators. According to Browder
\cite{Brow65}, who initiated the study of set-valued
monotone operators in Banach spaces, the Hilbertian setting was
developed by Minty in unpublished notes. A set-valued operator
$A\colon\HH\to 2^{\HH}$ is maximally monotone if
\begin{equation} 
\label{e:maxmon2}
(\forall x\in\HH)(\forall u\in\HH)\quad\big[\:
(x,u)\in\gra A\quad\Leftrightarrow\quad(\forall (y,v)\in\gra A)\;\;
\scal{x-y}{u-v}\geq 0\;\big].
\end{equation}
In other words, $A$ is monotone and there exists no monotone 
operator $B\colon\HH\to 2^{\HH}$ distinct from $A$ such that
$\gra A\subset\gra B$. A key result of \cite{Mint62} is the
following theorem, which can be viewed as an extension of 
Theorem~\ref{t:zarantonello1} since a continuous monotone
operator $A\colon\HH\to\HH$ is maximally monotone.

\begin{theorem}[Minty]
\label{t:minty1}
Let $A\colon\HH\to 2^{\HH}$ be a monotone operator.
Then $A$ is maximally monotone if and only if $\ran(\Id+A)=\HH$.
\end{theorem}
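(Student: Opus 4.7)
The direction $(\Leftarrow)$ is short and I verify \eqref{e:maxmon2} directly from the surjectivity hypothesis. The direction $(\Rightarrow)$ is the substantive content and I reduce it to Theorem~\ref{t:zarantonello1} by Yosida regularization combined with a connectedness argument in the resolvent parameter.

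\textbf{Direction $(\Leftarrow)$.} Assume $\ran(\Id+A)=\HH$ and let $(y,v)\in\HH\times\HH$ satisfy $\scal{x-y}{u-v}\geq 0$ for every $(x,u)\in\gra A$. By surjectivity, pick $(x,u)\in\gra A$ with $x+u=y+v$, equivalently $u-v=y-x$. Plugging into the monotonicity inequality gives $-\|x-y\|^2\geq 0$, forcing $x=y$ and hence $v=u\in Ay$. So $(y,v)\in\gra A$, which is \eqref{e:maxmon2}.

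\textbf{Direction $(\Rightarrow)$.} Fix $z\in\HH$; after replacing $A$ by $x\mapsto Ax-z$ (which is still maximally monotone) it suffices to find $x$ with $-x\in Ax$. Monotonicity alone ensures that wherever $J_\lambda:=(\Id+\lambda A)^{-1}$ is defined it is single-valued and firmly nonexpansive, and the Yosida approximant $A_\lambda:=\lambda^{-1}(\Id-J_\lambda)$ is then monotone and $\lambda^{-1}$-Lipschitzian on $\ran(\Id+\lambda A)$. The scheme is to introduce
$$\Omega=\menge{\lambda>0}{\ran(\Id+\lambda A)=\HH}$$
and show it is nonempty, relatively open, and relatively closed in $(0,\infty)$, so that $\Omega=(0,\infty)$ by connectedness and in particular $1\in\Omega$. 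Openness follows from a Banach contraction argument: for $\lambda_0\in\Omega$ and $\lambda$ close to $\lambda_0$, the inclusion $w\in(\Id+\lambda A)x$ is recast as a fixed-point equation for $J_{\lambda_0}$ composed with an affine perturbation that is strictly contractive when $|\lambda-\lambda_0|$ is small. Nonemptiness is where Theorem~\ref{t:zarantonello1} enters: applied to a Lipschitz regularization (or to the Cayley transform of $J_{\lambda_0}$ on a bootstrap subspace) it produces at least one admissible $\lambda$. Closedness is handled by showing that if $\lambda_n\to\lambda\in(0,\infty)$ with $\lambda_n\in\Omega$ and $x_n$ solves $0\in x_n+\lambda_n Ax_n$, then testing against a fixed $(y,v)\in\gra A$ yields boundedness of $(x_n)$, so a weak cluster point $\overline x$ can be identified as a solution.

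\textbf{Main obstacle.} The decisive step is the closedness of $\Omega$, because the graph of a merely monotone operator is not sequentially weakly closed. This is exactly where maximality is indispensable: it permits passing from the inequalities $\scal{x_n-y}{u_n-v}\geq 0$, valid for every $(y,v)\in\gra A$, to the membership $(\overline x,\overline u)\in\gra A$ via the characterization \eqref{e:maxmon2}. Nonemptiness is the secondary hurdle and is precisely where Zarantonello's theorem is invoked as a black box, confirming that Minty's theorem should be seen as a genuine extension of Theorem~\ref{t:zarantonello1} from the continuous monotone setting to the full maximally monotone (possibly set-valued) one.
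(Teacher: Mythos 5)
The paper does not prove this statement; it cites it from \cite{Mint62} as classical background, so your attempt can only be judged on its own merits. Your ($\Leftarrow$) direction is correct and is the standard argument. Your openness step is also correct: writing $w\in x+\lambda Ax$ as $x=J_{\lambda_0}\big((\lambda_0/\lambda)w+(1-\lambda_0/\lambda)x\big)$ gives a contraction with constant $|1-\lambda_0/\lambda|<1$ for every $\lambda>\lambda_0/2$, so in fact $\lambda_0\in\Omega$ implies $\left]\lambda_0/2,+\infty\right[\subset\Omega$ and, by iteration, $\Omega=\left]0,+\infty\right[$. This makes your closedness step superfluous, and it also shows that your assessment of where the difficulty lies is inverted: the entire content of the theorem is the nonemptiness of $\Omega$, which you dispatch in one sentence.

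That sentence is where the proof breaks. You propose to obtain some $\lambda\in\Omega$ by applying Theorem~\ref{t:zarantonello1} ``to a Lipschitz regularization,'' but the only natural Lipschitz monotone regularization of a general $A\in\ML(\HH)$ is the Yosida approximation $A_\lambda=\lambda^{-1}(\Id-J_\lambda)$, whose domain is $\ran(\Id+\lambda A)$ --- precisely the set whose fullness you are trying to establish. Since $\dom A$ may be tiny (e.g., $A=\partial\iota_{\{0\}}$), there is no everywhere-defined Lipschitzian monotone operator available to feed into Zarantonello's theorem, and the parenthetical ``Cayley transform of $J_{\lambda_0}$ on a bootstrap subspace'' presupposes a $\lambda_0\in\Omega$ that you do not yet have. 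The argument is circular as stated. To close the gap you need a genuinely different ingredient: for instance, extend the nonexpansive map $2(\Id+A)^{-1}-\Id$ from $\ran(\Id+A)$ to all of $\HH$ by the Kirszbraun--Valentine theorem, observe that the extended firmly nonexpansive map corresponds via Theorem~\ref{t:minty2}\ref{t:minty2i}$\Leftrightarrow$\ref{t:minty2ii} to a monotone operator whose graph contains $\gra A$, and invoke maximality to conclude that the extension changes nothing; alternatively, use the Debrunner--Flor lemma / finite-dimensional approximation with Brouwer's fixed point theorem, or the convex-analytic proof via the Fitzpatrick function given in \cite[Thm.~21.1]{Livre1} or \cite{Brez73}. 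Any of these supplies the missing surjectivity for one $\lambda$, after which your contraction argument finishes the proof; without one of them, the ($\Rightarrow$) direction is not established.
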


The paper \cite{Mint62} also establishes an important connection
between monotonicity and nonexpansiveness, which we state in the
following form.

\begin{theorem}{\rm\cite[Prop.~4.4 and Cor.~23.9]{Livre1}}
\label{t:minty2}
Let $T\colon\HH\to\HH$. Then the following are equivalent:
\begin{enumerate}
\item
\label{t:minty2i}
$T$ is firmly nonexpansive, i.e., {\rm \cite{Brow67}}
\begin{equation}
\label{e:firm}
(\forall x\in\HH)(\forall y\in\HH)\quad\|Tx-Ty\|^2+
\|(\Id-T)x-(\Id-T)y\|^2\leq\|x-y\|^2.
\end{equation}
\item
\label{t:minty2ii}
$R=2T-\Id$ is nonexpansive, i.e.,
$(\forall x\in\HH)(\forall y\in\HH)\quad\|Rx-Ry\|\leq\|x-y\|$.
\item
There exists a maximally monotone operator 
$A\colon\HH\to 2^{\HH}$ such that $T$ is the resolvent of $A$,
i.e.,
\begin{equation}
\label{e:dresolvent}
T=J_A,\quad\text{where}\quad J_A=(\Id+A)^{-1}.
\end{equation}
\end{enumerate}
\end{theorem}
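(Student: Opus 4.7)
The plan is to prove the equivalences by the cycle (i)$\Leftrightarrow$(ii) and (i)$\Leftrightarrow$(iii), leveraging Minty's theorem (Theorem~\ref{t:minty1}), which was stated immediately before this result.

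For (i)$\Leftrightarrow$(ii), the key is to observe that both conditions are equivalent to the single inequality $\|Tx-Ty\|^2\leq\scal{x-y}{Tx-Ty}$. First I would expand the firm-nonexpansiveness inequality \eqref{e:firm} by writing $\|(\Id-T)x-(\Id-T)y\|^2=\|x-y\|^2-2\scal{x-y}{Tx-Ty}+\|Tx-Ty\|^2$ and simplifying. Then, substituting $R=2T-\Id$ into $\|Rx-Ry\|^2\leq\|x-y\|^2$ and expanding $\|2(Tx-Ty)-(x-y)\|^2$ yields the same inequality. This is straightforward algebra and poses no difficulty.

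For (iii)$\Rightarrow$(i), take $A$ maximally monotone with $T=J_A$. Setting $u=Tx$ and $v=Ty$, the identity $T=(\Id+A)^{-1}$ gives $x-u\in Au$ and $y-v\in Av$. The monotonicity of $A$ then reads $\scal{u-v}{(x-u)-(y-v)}\geq 0$, which rearranges to $\|Tx-Ty\|^2\leq\scal{x-y}{Tx-Ty}$, i.e., firm nonexpansiveness. For (i)$\Rightarrow$(iii), I would construct $A$ directly from its graph by
\begin{equation*}
\gra A=\menge{(Tx,x-Tx)}{x\in\HH},
\end{equation*}
so that $\gra(\Id+A)=\{(Tx,x)\mid x\in\HH\}$ and hence $(\Id+A)^{-1}=T$ as required. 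Monotonicity of $A$ follows by picking $(u_i,w_i)=(Tx_i,x_i-Tx_i)$ for $i\in\{1,2\}$ and computing
\begin{equation*}
\scal{u_1-u_2}{w_1-w_2}=\scal{Tx_1-Tx_2}{x_1-x_2}-\|Tx_1-Tx_2\|^2\geq 0,
\end{equation*}
which is exactly firm nonexpansiveness. Since $T$ is defined on all of $\HH$, we have $\ran(\Id+A)=\dom T=\HH$, and Theorem~\ref{t:minty1} then upgrades monotonicity of $A$ to maximal monotonicity.

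The main obstacle, modest as it is, lies in the (i)$\Rightarrow$(iii) direction: one must correctly identify the candidate operator $A$ and then appeal to Minty's theorem for maximality rather than attempting to verify \eqref{e:maxmon2} directly. A subtle point worth flagging is that the construction must be single-graph-based (so as to avoid any circular use of $J_A$), and the verification that distinct preimages under $T$ produce consistent additions to $\gra A$ requires no separate argument since the definition is tautologically by the graph.
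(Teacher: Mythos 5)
Your proof is correct, and since the paper states this theorem only with a citation to \cite[Prop.~4.4 and Cor.~23.9]{Livre1} rather than proving it, there is nothing internal to compare against; your argument is precisely the standard one from that reference. The algebraic reduction of \ref{t:minty2i} and \ref{t:minty2ii} to $\|Tx-Ty\|^2\leq\scal{x-y}{Tx-Ty}$, the graph-based construction $\gra A=\menge{(Tx,x-Tx)}{x\in\HH}$, and the appeal to Theorem~\ref{t:minty1} for maximality are all sound and complete.
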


From the onset, monotone operator theory impacted areas such as
partial differential equations, evolution equations and
inclusions, and nonlinear equations; see for instance
\cite{Brez73,Bro63a,Bro63b,Ghou09,Kach68,Komu67,Lera65,Mint63,%
Show97,Vain72,Zara71,Zei90B}. In particular, in such problems, it
turned out to provide efficient tools to derive existence 
results. Standard references on the modern theory of monotone
operators are \cite{Livre1,Brez73,Phel93,Simo98}.
From a modeling standpoint, monotone operator theory
constitutes a powerful framework that reduces many problems in
nonlinear analysis to the simple formulation 
\begin{equation}
\label{e:zero}
\text{find}\;\;x\in\zer A=\menge{x\in\HH}{0\in Ax}=\Fix J_A,
\quad\text{where}\quad
A\colon\HH\to 2^{\HH}\;\text{is maximally monotone.}
\end{equation}
The most direct connection between monotone operator theory
and optimization is obtained through the subdifferential
of a proper function $f\colon\HH\to\RX$, i.e., the 
operator \cite{Mor63a,Mor63c,Rock63}
\begin{equation}
\label{e:subdiff}
\partial f\colon\HH\to 2^{\HH}\colon x\mapsto\menge{u\in\HH}
{(\forall y\in\HH)\;\;\scal{y-x}{u}+f(x)\leq f(y)}.
\end{equation}
This operator is easily seen to be monotone. In addition, 
from the standpoint of minimization, a straightforward yet 
fundamental consequence of \eqref{e:subdiff} is Fermat's rule.
It states that, for every proper function $f\colon\HH\to\RX$,
\begin{equation}
\label{e:fermat}
\Argmin f=\zer\partial f.
\end{equation}
The maximality of the subdifferential was first investigated by
Minty \cite{Mint64} for certain classes of convex functions, and
then by Moreau \cite{More65} in full generality. 

\begin{theorem}[Moreau]
\label{t:moreau1}
Let $f\colon\HH\to\RX$ be a proper lower semicontinuous convex
function. Then $\partial f$ is maximally monotone. 
\end{theorem}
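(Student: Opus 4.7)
The plan is to invoke Minty's characterization (Theorem~\ref{t:minty1}) and thereby reduce the claim to showing that $\ran(\Id+\partial f)=\HH$. Monotonicity of $\partial f$ is a direct consequence of \eqref{e:subdiff}: for any $u\in\partial f(x)$ and $v\in\partial f(y)$, adding the inequalities $f(y)\geq f(x)+\scal{y-x}{u}$ and $f(x)\geq f(y)+\scal{x-y}{v}$ yields $\scal{x-y}{u-v}\geq 0$. The crux of the argument is therefore the range identity.

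Fix an arbitrary $z\in\HH$ and introduce the Moreau-type regularization
\[
\varphi\colon\HH\to\RX\colon x\mapsto f(x)+\frac{1}{2}\|x-z\|^2.
\]
I first verify that $\varphi$ admits a unique minimizer $x^*\in\HH$. Since $f$ is proper, lsc, and convex, the Hahn--Banach theorem applied to $\epi f$ furnishes $\alpha\in\HH$ and $\beta\in\RR$ with $f\geq\scal{\cdot}{\alpha}+\beta$, whence $\varphi\geq\scal{\cdot}{\alpha}+\beta+\tfrac{1}{2}\|\cdot-z\|^2$ is coercive. Combined with the properness, lower semicontinuity, and strict convexity of $\varphi$, a routine weak-compactness argument on a minimizing sequence delivers existence and uniqueness of $x^*$.

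Next, I combine Fermat's rule \eqref{e:fermat} with the sum rule
\[
\partial\varphi(x^*)=\partial f(x^*)+(x^*-z),
\]
valid because the quadratic term $x\mapsto\tfrac{1}{2}\|x-z\|^2$ is everywhere G\^ateaux differentiable with gradient $x\mapsto x-z$. From $0\in\partial\varphi(x^*)$ I deduce $z\in x^*+\partial f(x^*)\subset\ran(\Id+\partial f)$, and since $z\in\HH$ was arbitrary, Minty's theorem concludes.

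The main obstacle is the sum rule $\partial(f+g)=\partial f+\nabla g$ where $g(x)=\tfrac{1}{2}\|x-z\|^2$. The inclusion $\supset$ is immediate from \eqref{e:subdiff}, but the reverse inclusion is not automatic: given $w\in\partial\varphi(x)$, one writes the subgradient inequality of $\varphi$ at the convex combination $x+t(y-x)$ for $t\in\zeroun$, invokes convexity of $f$ to split the resulting estimate, divides by $t$, and lets $t\downarrow 0$ to absorb the quadratic remainder, obtaining $w-(x-z)\in\partial f(x)$. This is the only delicate step; everything else is standard convex analysis combined with the Minty range theorem recalled above.
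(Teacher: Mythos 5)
Your proof is correct and follows exactly the route the paper itself indicates: the paper gives no self-contained proof of this theorem but points to the argument via Minty's surjectivity criterion (Theorem~\ref{t:minty1}), i.e., establishing $\ran(\Id+\partial f)=\HH$ by minimizing $x\mapsto f(x)+\tfrac{1}{2}\|x-z\|^2$ for each $z\in\HH$, which is precisely what you carry out. The one delicate step you flag, the sum rule $\partial\big(f+\tfrac{1}{2}\|\cdot-z\|^2\big)=\partial f+(\Id-z)$, is handled correctly by your limiting argument, so there is no gap.
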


One way to prove Moreau's theorem is to use
Theorem~\ref{t:minty1}; see \cite[Theorem~21.2]{Livre1} or
\cite[Exemple~2.3.4]{Brez73}. Interestingly, Moreau's proof in
\cite{More65} did not rely on Theorem~\ref{t:minty1} but on 
proximal calculus. The proximity operator of a function 
$f\in\Gamma_0(\HH)$ is \cite{Mor62b}
\begin{equation}
\label{e:dprox}
\prox_{f}\colon\HH\to\HH\colon
x\mapsto\underset{y\in\HH}{\text{argmin}}\;
\bigg(f(y)+\frac{1}{2}\|x-y\|^2\bigg).
\end{equation}
This operator is intimately linked to the subdifferential
operator. Indeed, let $f\in\Gamma_0(\HH)$. Then 
\begin{equation}
\label{e:moreau1}
(\forall x\in\HH)(\forall u\in\HH)\quad
u\in\partial f(x)\quad\Leftrightarrow\quad x=\prox_f(x+u).
\end{equation}
Alternatively,
\begin{equation}
\label{e:moreau2}
(\forall x\in\HH)(\forall p\in\HH)\quad
p=\prox_fx\quad\Leftrightarrow\quad x-p\in\partial f(p),
\end{equation}
which entails, using \eqref{e:subdiff}, that $\prox_f$ is
firmly nonexpansive. Furthermore, \eqref{e:fermat} and
\eqref{e:moreau2} imply that $\Argmin f=\Fix\prox_f$. Since fixed
points of firmly nonexpansive operators can be constructed by
successive approximations \cite{Brow67,Opia67}, a conceptual 
algorithm for finding a minimizer of $f$ is
\begin{equation}
\label{e:martinet1}
x_0\in\HH\quad\text{and}\quad(\forall n\in\NN)\quad
x_{n+1}=\prox_f x_n.
\end{equation}
This scheme was first studied by Martinet in the early 1970s
\cite{Mart70,Mart72}, and a special case in the context of
quadratic programming appeared in \cite[Sect.~5.8]{Bell66}. 
Though of limited practical use, this so-called proximal point
algorithm occupies nonetheless a central place in convex
minimization schemes because it embraces many fundamental
ideas and connections that have inspired much more efficient 
and broadly applicable minimization algorithms in the form 
of proximal splitting methods
\cite{Livre1,Banf11,Glow16}. The methodology underlying these
algorithms is to solve structured convex minimization problems
using only the proximity operators of the individual functions
present in the model. 

Moreau's motivations for introducing the proximity operator
\eqref{e:dprox} came from nonsmooth mechanics 
\cite{Mor63m,Mor64m,More66}. In recent years proximity operators 
have become prominent in convex optimization theory. For
instance, they play a central theoretical role in \cite{Livre1}.
On the application side, their increasing presence is
particularly manifest in the broad area of data 
processing, where they were introduced in \cite{Smms05}
and have since proven very effective in the modeling and the 
numerical solution of a vast array of problems in disciplines such 
as signal processing, image recovery, machine learning, and
computational statistics; see for instance
\cite{Beck09,Botr14,Byrn14,Chau13,Icip14,Jmaa18,Siop07,%
Banf11,Save18,Duch09,Xiao12,Jena11,Papa14,Vait18,Wrig12,Zeng14}.

At first glance, it may appear that the theory of subdifferentials
and proximity operators forms a self-contained corpus of
theoretical and algorithmic tools which is sufficient to deal
with convex optimization problems, and that the broader concepts
of monotone operators and resolvents play only a peripheral role
in such problems. A goal of this paper is to
show that monotone operator theory occupies a central position
in convex optimization, and that many advances in the latter would 
not have been possible without it. Conversely, we shall see
that some algorithmic developments in monotonicity methods have 
directly benefited from convex minimization methodologies. We
shall also examine certain aspects of the gap that separates the
two theories. Section~\ref{sec:2} covers notation and background.
Section~\ref{sec:3} studies subdifferentials as maximally monotone
operators and proximity operators as resolvents, discussing
characterizations, new proximity-preserving transformations, and 
self-dual classes. Section~\ref{sec:4} focuses on the use of
monotone operator theory in analyzing and solving convex
optimization problems, and it proposes new insights and 
developments. 

\section{Notation and background}
\label{sec:2}
We follow the notation of \cite{Livre1}, where one will find a
detailed account of the following notions.
The direct Hilbert sum of $\HH$ and a real Hilbert space $\GG$ is 
denoted by $\HH\oplus\GG$.
Let $A\colon\HH\to 2^{\HH}$ be a set-valued operator. We denote by 
$\gra A=\menge{(x,u)\in\HH\times\HH}{u\in Ax}$ the graph of $A$,
by $\dom A=\menge{x\in\HH}{Ax\neq\emp}$ the domain of $A$, by 
$\ran A=\menge{u\in\HH}{(\exi x\in\HH)\;u\in Ax}$ the range of $A$,
by $\zer A=\menge{x\in\HH}{0\in Ax}$ the set of zeros of $A$,
and by $A^{-1}$ the inverse of $A$, i.e., the set-valued operator 
with graph $\menge{(u,x)\in\HH\times\HH}{u\in Ax}$. 
The parallel sum of $A$ and 
$B\colon\HH\to 2^{\HH}$, and the parallel composition of $A$ by 
$L\in\BL(\HH,\GG)$ are, respectively,
\begin{equation}
\label{e:parasum}
A\infconv B=(A^{-1}+B^{-1})^{-1}\quad\text{and}\quad
L\pushfwd A=\big(L\circ A^{-1}\circ L^*)^{-1}.
\end{equation}
The resolvent of $A$ is $J_A=(\Id+A)^{-1}=A^{-1}\infconv\Id$. 
The set of fixed points of an operator $T\colon\HH\to\HH$
is $\Fix T=\menge{x\in\HH}{Tx=x}$.
The set of global minimizers of a function 
$f\colon\HH\to\RX$ is denoted by $\Argmin f$ and, if it is a
singleton, its unique element is denoted by $\text{argmin}\,f$.
We denote by $\Gamma_0(\HH)$ the class of lower semicontinuous 
convex functions $f\colon\HH\to\RX$ such that 
$\dom f=\menge{x\in\HH}{f(x)<\pinf}\neq\emp$. Now let
$f\in\Gamma_0(\HH)$. The conjugate of $f$ is the function 
$f^*\in\Gamma_0(\HH)$ defined by 
$f^*\colon u\mapsto\sup_{x\in\HH}(\scal{x}{u}-f(x))$.
The subdifferential of $f$ is defined in \eqref{e:subdiff}, and
its inverse is $(\partial f)^{-1}=\partial f^*$. The proximity 
operator $\prox_f$  of $f$ is defined in \eqref{e:dprox}.
We say that $f$ is $\nu$-strongly convex for some $\nu\in\RPP$
if $f-\nu\|\cdot\|^2/2$ is convex.
The infimal convolution of $f$ and $g\in\Gamma_0(\HH)$ is
\begin{equation}
\label{e:infconv1}
f\infconv g\colon\HH\to\RXX\colon x\mapsto
\inf_{y\in\HH}\big(f(y)+g(x-y)\big).
\end{equation}
Let $C$ be a convex subset of $\HH$. The interior of $C$ is denoted
by $\inte\,C$, the boundary of $C$ by $\text{bdry}\,C$, 
the indicator function of
$C$ by $\iota_C$, the distance function to $C$ by
$d_C$, the support function of $C$ by $\sigma_C$ and, 
if $C$ is nonempty and closed,
the projection operator onto $C$ by $\proj_C$, i.e.,
$\proj_C=\prox_{\iota_C}$. A point $x\in C$ is in
the strong relative interior of $C$, denoted by $\sri C$, if
the cone generated by $C-x$ is a closed vector subspace of $\HH$.
We define
\begin{itemize}
\item
$\BL(\HH,\GG)=\menge{T\colon\HH\to\GG}
{T\;\text{is linear and bounded}}$ and $\BL(\HH)=\BL(\HH,\HH)$.
\item
$\NL(\HH)=\menge{T\colon\HH\to\HH}{T\;\text{is nonexpansive}}$.
\item
$\FL(\HH)=\menge{T\colon\HH\to\HH}{T\;
\text{is firmly nonexpansive}}$.
\item
$\ML(\HH)=\menge{A\colon\HH\to 2^{\HH}}{A\;
\text{is maximally monotone}}$.
\item
$\SL(\HH)=\menge{A\colon\HH\to 2^{\HH}}
{(\exi f\in\Gamma_0(\HH))\;A=\partial f}$.
\item
$\JL(\HH)=\menge{T\colon\HH\to\HH}{(\exi A\in\ML(\HH))\;T=J_A}$.
\item
$\PL(\HH)=\menge{T\colon\HH\to\HH}{(\exi f\in\Gamma_0(\HH))\;
T=\prox_f}$.
\item
$\KL(\HH)=\menge{T\colon\HH\to\HH}{T=\proj_K\;\text{for some
nonempty closed convex cone}\;K\subset\HH}$.
\item
$\VL(\HH)=\menge{T\colon\HH\to\HH}{T=\proj_V\;\text{for some
closed vector space}\;V\subset\HH}$.
\end{itemize}
Facts mentioned in Section~\ref{sec:1} are summarized by the
inclusions
\begin{equation}
\label{e:incl}
\SL(\HH)\subset\ML(\HH),\;
\VL(\HH)\subset\KL(\HH)\subset\PL(\HH)\subset\JL(\HH)
=\FL(\HH)\subset\NL(\HH),\;
\text{and}\;\FL(\HH)\subset\ML(\HH).
\end{equation}
We have
\begin{equation}
\label{e:ja}
\big(\forall A\in\ML(\HH)\big)\quad A^{-1}\in\ML(\HH)
\quad\text{and}\quad
J_{A^{-1}}+J_A=A\infconv\Id+A^{-1}\infconv\Id=\Id.
\end{equation}

\begin{theorem}[Moreau's decomposition \cite{Mor62b,Mor63a,More65}]
\label{t:jjm0}
Let $f\in\Gamma_0(\HH)$ and set $q=\|\cdot\|^2/2$. Then 
$f\infconv q+f^*\infconv q=q$ and 
$\prox_f=J_{\partial f}=\nabla(f+q)^*=
\nabla(f^*\infconv q)=(\partial f^*)\infconv\Id=\Id-\prox_{f^*}$.
\end{theorem}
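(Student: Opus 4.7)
The plan is to chain the equalities by handling the routine algebraic pieces first and then anchoring everything on a single squared-norm identity. The identity $\prox_f=J_{\partial f}$ is just a rewriting of \eqref{e:moreau2}: $p=\prox_f x\Leftrightarrow x-p\in\partial f(p)\Leftrightarrow x\in (\Id+\partial f)(p)\Leftrightarrow p=J_{\partial f}x$. Since $(\partial f)^{-1}=\partial f^*$ and $\Id^{-1}=\Id$, the definition of the parallel sum in \eqref{e:parasum} immediately yields $(\partial f^*)\infconv\Id=\big((\partial f^*)^{-1}+\Id\big)^{-1}=(\partial f+\Id)^{-1}=J_{\partial f}$.

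For the Moreau decomposition $\Id=\prox_f+\prox_{f^*}$, set $p=\prox_f x$ and $r=x-p$. Then $r\in\partial f(p)$, which via $(\partial f)^{-1}=\partial f^*$ is equivalent to $p\in\partial f^*(r)$, i.e., $x-r\in\partial f^*(r)$; by \eqref{e:moreau2} this says exactly $r=\prox_{f^*}x$, whence $\prox_f=\Id-\prox_{f^*}$. Combining this with the Fenchel-Young equality at the primal-dual pair, $f(p)+f^*(r)=\scal{p}{r}$ (available because $r\in\partial f(p)$), the sum of Moreau envelopes telescopes:
\begin{equation*}
(f\infconv q)(x)+(f^*\infconv q)(x)
=f(p)+\tfrac12\|r\|^2+f^*(r)+\tfrac12\|p\|^2
=\scal{p}{r}+\tfrac12\|p\|^2+\tfrac12\|r\|^2
=\tfrac12\|p+r\|^2=q(x).
\end{equation*}

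For the conjugate-side equalities, completing the square in the definition gives $(f+q)^*(x)=q(x)-\inf_{y\in\HH}\big(f(y)+\tfrac12\|y-x\|^2\big)=q(x)-(f\infconv q)(x)$, which by the envelope identity just established equals $(f^*\infconv q)(x)$. Since $f+q$ is $1$-strongly convex, $(f+q)^*$ is Fréchet differentiable on $\HH$ and the supremum in its defining formula is uniquely attained at $y=\prox_f x$; hence $\nabla(f+q)^*=\prox_f$, and consequently $\nabla(f^*\infconv q)=\prox_f$ as well.

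The main obstacle is the squared-norm envelope identity: one must recognize the primal-dual pair $(p,r)$ produced by the Moreau decomposition and invoke Fenchel-Young in the \emph{equality} case so that the cross term $\scal{p}{r}$ recombines with $\tfrac12\|p\|^2+\tfrac12\|r\|^2$ into $\tfrac12\|x\|^2=q(x)$. Once this identity is in hand, every other equality in the statement reduces to mechanical manipulation: the definitions of parallel sum and resolvent, completion of squares inside the conjugate, and differentiability of the conjugate of a strongly convex function.
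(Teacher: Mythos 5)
The paper does not prove this theorem: it is quoted as background with a citation to Moreau's original papers, so there is no in-text argument to compare against. Your reconstruction is correct and complete. The chain of reductions is sound: \eqref{e:moreau2} gives $\prox_f=J_{\partial f}$; the identities $(\partial f)^{-1}=\partial f^*$ and $\Id^{-1}=\Id$ give $(\partial f^*)\infconv\Id=J_{\partial f}$; the equivalence $r\in\partial f(p)\Leftrightarrow p\in\partial f^*(r)$ applied to $p=\prox_f x$, $r=x-p$ gives $\Id=\prox_f+\prox_{f^*}$; the Fenchel--Young equality $f(p)+f^*(r)=\scal{p}{r}$ (valid precisely because $r\in\partial f(p)$) makes the two envelopes recombine into $q$; and completing the square yields $(f+q)^*=q-f\infconv q=f^*\infconv q$, whose gradient is the argmax $\prox_f x$ by strong convexity of $f+q$. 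This is essentially Moreau's own route (proximal calculus plus conjugacy), consistent with the paper's remark in Section~\ref{sec:1} that Moreau's arguments relied on proximal calculus rather than on Theorem~\ref{t:minty1}; the only steps you take as given --- \eqref{e:moreau2}, the equality case of Fenchel--Young, and differentiability of the conjugate of a $1$-strongly convex function --- are all standard and are either stated in the paper or immediate from the definitions it records.
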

The next result brings together ideas from 
\cite{Bail77} and \cite{More65}.

\begin{theorem}[\rm\cite{Joca10}]
\label{Trio9-}
Let $h\colon\HH\to\RR$ be continuous and convex, and set 
$f=h^*-q$, where $q=\|\cdot\|^2/2$. 
Then the following are equivalent:
\begin{enumerate}
\item
\label{Trio9-i}
$h$ is Fr\'echet differentiable on $\HH$ and 
$\nabla h\in\NL(\HH)$.
\item
\label{Trio9-v}
$h$ is Fr\'echet differentiable on $\HH$ and 
$\nabla h\in\FL(\HH)$.
\item
\label{Trio9-vi}
$q-h$ is convex.
\item
\label{Trio9-vii}
$h^*-q$ is convex.
\item
\label{Trio9-viii}
$f\in\Gamma_0(\HH)$ and
$h=f^*\infconv q=q-f\infconv q$.
\item
\label{Trio9-ix}
$f\in\Gamma_0(\HH)$ and
$\prox_{f}=\nabla h=\Id-\prox_{f^*}$.
\end{enumerate}
\end{theorem}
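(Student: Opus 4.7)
My plan is to prove the six statements are equivalent via the cycle
\[
\text{(iv)}\Rightarrow\text{(v)}\Rightarrow\text{(vi)}\Rightarrow\text{(ii)}\Rightarrow\text{(i)}\Rightarrow\text{(iii)}\Rightarrow\text{(iv)},
\]
where the first four implications rest on Moreau's decomposition (Theorem~\ref{t:jjm0}) and the return uses the Baillon--Haddad theorem \cite{Bail77}. For (iv) $\Rightarrow$ (v), set $f:=h^*-q$; since $h\in\Gamma_0(\HH)$ (being continuous real-valued convex) forces $h^*\in\Gamma_0(\HH)$, the hypothesis of (iv) promotes $f$ to $\Gamma_0(\HH)$, and then $h=h^{**}=(f+q)^*=f^*\infconv q^*=f^*\infconv q$ via the conjugate-of-a-sum formula (valid because $q$ is continuous everywhere), which Moreau's decomposition rewrites as $q-f\infconv q$. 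The implications (v) $\Rightarrow$ (vi) and (vi) $\Rightarrow$ (ii) follow directly from Theorem~\ref{t:jjm0}, and (ii) $\Rightarrow$ (i) is the inclusion $\FL(\HH)\subset\NL(\HH)$ recorded in \eqref{e:incl}. Finally, (i) $\Rightarrow$ (iii) is elementary: Cauchy--Schwarz gives $\scal{x-y}{\nabla h(x)-\nabla h(y)}\leq\|x-y\|^2$, so $\Id-\nabla h=\nabla(q-h)$ is monotone, which on a Fr\'echet differentiable function characterises convexity.

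The heart of the proof is (iii) $\Rightarrow$ (iv). Set $k:=q-h\in\Gamma_0(\HH)$. Since $h+k=q$ and $\partial q(x)=\{x\}$, the subdifferential sum rule forces $\partial h(x)+\partial k(x)=\{x\}$; because both subdifferentials are nonempty at each $x$ (by continuity of $h$ and $k$), each must be a singleton, giving G\^ateaux differentiability with $\nabla h+\nabla k=\Id$. Convexity of $k$ produces the descent lemma $h(y)\leq h(x)+\scal{y-x}{\nabla h(x)}+\tfrac12\|y-x\|^2$, and applying it to the shifted convex function $\phi\colon v\mapsto h(v)-\scal{v}{\nabla h(u)}$ (globally minimised at $u$ since $\nabla\phi(u)=0$) at $y=v-\nabla\phi(v)$ yields $\tfrac12\|\nabla h(v)-\nabla h(u)\|^2\leq h(v)-h(u)-\scal{v-u}{\nabla h(u)}\leq\tfrac12\|v-u\|^2$. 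This is the $1$-Lipschitz property of $\nabla h$, and combined with G\^ateaux differentiability it upgrades to Fr\'echet, so (i) holds. Baillon--Haddad \cite{Bail77} then promotes $\nabla h$ to firmly nonexpansive, and Moreau's characterisation of proximity operators (a firmly nonexpansive gradient of a convex function is a proximity operator) identifies $\nabla h=\prox_f$ for some $f\in\Gamma_0(\HH)$, i.e.\ (vi), whence (iv) closes the cycle.

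The main obstacle is (iii) $\Rightarrow$ (iv): one must bootstrap the single convexity hypothesis on $q-h$ into Fr\'echet differentiability of $h$ with firmly nonexpansive gradient. The two essentially Hilbertian ingredients -- the subdifferential-sum singleton-forcing argument and the Baillon--Haddad theorem -- do the bulk of the work, and any cleaner proof would presumably short-circuit these via a direct conjugation-based derivation of (iv) from (iii).
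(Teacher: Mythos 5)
The paper itself offers no proof of Theorem~\ref{Trio9-}: it is quoted from \cite{Joca10} with the remark that it brings together ideas from \cite{Bail77} and \cite{More65}, so there is no in-paper argument to measure you against; I can only assess your proof on its own terms, and it is correct. The chain (iv)$\Rightarrow$(v)$\Rightarrow$(vi)$\Rightarrow$(ii)$\Rightarrow$(i) via Fenchel--Moreau, the exact conjugation identity $(f+q)^*=f^*\infconv q$ (valid since $q$ is finite and continuous), and Theorem~\ref{t:jjm0} is sound, as is (i)$\Rightarrow$(iii) by Cauchy--Schwarz and the monotone-gradient characterization of convexity. Your treatment of the hard step (iii)$\Rightarrow$(iv) is also correct: the inclusion $\partial h(x)+\partial(q-h)(x)\subset\partial q(x)=\{x\}$, with both summands nonempty by continuity, forces singletons and hence G\^ateaux differentiability without any sum-rule qualification, and the descent lemma then delivers the Lipschitz bound and the upgrade to Fr\'echet differentiability. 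Two remarks. First, your appeal to the Baillon--Haddad theorem is dispensable: the inequality $\tfrac12\|\nabla h(v)-\nabla h(u)\|^2\leq h(v)-h(u)-\scal{v-u}{\nabla h(u)}$ that you derive, added to its counterpart with $u$ and $v$ interchanged, gives $\|\nabla h(v)-\nabla h(u)\|^2\leq\scal{v-u}{\nabla h(v)-\nabla h(u)}$, i.e., firm nonexpansiveness outright --- this computation \emph{is} the standard proof of Baillon--Haddad, so citing the theorem on top of reproving it is redundant (and, since (i)$\Leftrightarrow$(ii) is part of the statement being established, it is cleaner not to lean on it as a black box). Second, the cycle in fact closes from (i) alone: Theorem~\ref{t:jjm1} applied to $T=\nabla h\in\NL(\HH)$ yields $\nabla h=\prox_f$ with $f=h^*-q\in\Gamma_0(\HH)$, which is (iv), so the detour through (ii) is optional. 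A last cosmetic point: in (vi)$\Rightarrow$(ii) you should say a word on why the differentiability implicit in $\nabla h=\prox_f$ is Fr\'echet rather than merely G\^ateaux; this follows from the continuity of $\prox_f$ (see, e.g., \cite[Cor.~17.42]{Livre1}).
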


\begin{lemma}{\rm\cite[Prop.~2.58]{Livre1}}
\label{l:1}
Let $f\colon\HH\to\RR$ be G\^ateaux differentiable, let 
$L\in\BL(\HH)$, and suppose that $\nabla f=L$. 
Then $L=L^*$, $f\colon x\mapsto f(0)+(1/2)\scal{Lx}{x}$, 
and $f$ is twice Fr\'echet differentiable.
\end{lemma}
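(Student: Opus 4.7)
The plan is to exploit the linearity of $\nabla f = L$ to show that $f$ must be a quadratic form plus a constant; once this explicit formula is in hand, self-adjointness of $L$ and twice Fr\'echet differentiability both drop out with almost no extra work.

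First, I would reduce to one-variable calculus along lines. For fixed $x,y\in\HH$, set $g\colon\RR\to\RR\colon t\mapsto f(x+ty)$. The hypothesis that $f$ is G\^ateaux differentiable with derivative $L$ yields, for every $t\in\RR$,
\begin{equation*}
g'(t)=\lim_{s\to 0}\frac{f((x+ty)+sy)-f(x+ty)}{s}=\scal{L(x+ty)}{y}.
\end{equation*}
Since $L\in\BL(\HH)$, the right-hand side is continuous in $t$, so $g$ is of class $C^1$ and the classical fundamental theorem of calculus gives
\begin{equation*}
f(x+y)-f(x)=\int_{0}^{1}\scal{L(x+ty)}{y}\,dt=\scal{Lx}{y}+\tfrac{1}{2}\scal{Ly}{y}.
\end{equation*}
Specializing this to $x=0$ (so $L0=0$) produces the announced quadratic form $f(y)=f(0)+\tfrac{1}{2}\scal{Ly}{y}$.

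Second, I would extract self-adjointness by substituting this expression back into the previous identity. Expanding $\tfrac{1}{2}\scal{L(x+y)}{x+y}$ and cancelling the terms that are independent of the cross product yields
\begin{equation*}
\tfrac{1}{2}\scal{Lx}{y}+\tfrac{1}{2}\scal{Ly}{x}=\scal{Lx}{y},\quad\text{hence}\quad\scal{Ly}{x}=\scal{Lx}{y}\quad(\forall x,y\in\HH),
\end{equation*}
and by the definition of the adjoint this is precisely $L=L^*$.

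Finally, twice Fr\'echet differentiability follows by direct computation. For every $x,h\in\HH$, expanding the quadratic expression for $f$ and using $L=L^*$ gives
\begin{equation*}
f(x+h)-f(x)-\scal{Lx}{h}=\tfrac{1}{2}\scal{Lh}{h},
\end{equation*}
whose absolute value is bounded by $\tfrac{1}{2}\|L\|\|h\|^{2}=o(\|h\|)$. Thus $f$ is Fr\'echet differentiable at each $x$ with derivative $h\mapsto\scal{Lx}{h}$, i.e., $\nabla f=L$ in the Fr\'echet sense. Since $\nabla f=L$ is itself a bounded linear map, it is trivially Fr\'echet differentiable with constant derivative $L$, so $f$ is twice Fr\'echet differentiable on $\HH$. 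The only mild subtlety is the opening step: G\^ateaux differentiability only provides pointwise directional limits, so one must verify that the restriction $g$ to a line is $C^1$ rather than merely differentiable before invoking the fundamental theorem of calculus. This is immediate from the boundedness of $L$, so no real obstacle remains; the conceptual heart of the argument is the polarization-style identity that forces the symmetry of $L$.
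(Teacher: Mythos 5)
Your proof is correct. The paper does not prove this lemma itself---it simply cites \cite[Prop.~2.58]{Livre1}---and your argument (restrict to lines, use the fundamental theorem of calculus to get the quadratic representation, then polarize to extract $L=L^*$ and read off the second Fr\'echet derivative) is essentially the standard one given in that reference, with the one genuine subtlety, namely upgrading pointwise G\^ateaux directional derivatives to a $C^1$ restriction along each line before integrating, correctly identified and handled.
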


\section{Subdifferentials as monotone operators}
\label{sec:3}
As seen in Section~\ref{sec:1}, from a convex optimization
perspective, the subdifferential and the proximity operators of 
a function in $\Gamma_0(\HH)$ constitute, respectively, prime 
examples of maximally monotone and firmly nonexpansive operators.
In this section with discuss some structural differences between
$\SL(\HH)$ and $\ML(\HH)$, and between $\PL(\HH)$ and $\JL(\HH)$.

\subsection{Characterization of subdifferentials}
\label{sec:31}
If $\HH=\RR$, then $\ML(\HH)=\SL(\HH)$; see
\cite[Sect.~24]{Rock70} or \cite[Cor.~22.23]{Livre1}. 
In general, however, this singular
situation no longer manifests itself. For instance if 
$0\neq A\in\BL(\HH)$ is skew (see \eqref{e:zarantonello7}), then 
$A\in\ML(\HH)$ since $A$ is monotone and continuous, 
but Lemma~\ref{l:1} asserts that it is not a gradient since it 
is not self-adjoint; it can therefore not be in $\SL(\HH)$.
A complete characterization of subdifferentials
as maximally monotone operators was given by Rockafellar in
\cite{Rock66}. An operator $A\colon\HH\to 2^\HH$ is 
cyclically monotone if, for every integer $n\geq 2$, every 
$(x_1,\ldots,x_{n+1})\in\HH^{n+1}$, and every 
$(u_1,\ldots,u_{n})\in\HH^{n}$,
\begin{equation} 
\label{e:mcyclic}
\big[\,(x_1,u_1)\in\gra A,\ldots,
(x_n,u_n)\in\gra A,\, x_{n+1}=x_1\,\big]
\quad\Rightarrow\quad 
\sum_{i=1}^n\scal{x_{i+1}-x_{i}}{u_{i}}\leq 0.
\end{equation}
In this case, $A$ is called maximally cyclically monotone if 
there exists no cyclically monotone operator 
$B\colon\HH\to 2^{\HH}$ such that $\gra B$ properly contains 
$\gra A$.

\begin{theorem}[Rockafellar] 
\label{t:mcyclic}
$\SL(\HH)=\menge{A\in\ML(\HH)}{A\;
\text{is maximally cyclically monotone}}$.
\end{theorem}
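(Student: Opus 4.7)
The plan is to prove the two inclusions of this classical theorem of Rockafellar separately.

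\textbf{Forward inclusion.} For $f \in \Gamma_0(\HH)$ and $A = \partial f$: given any cycle $(x_1,u_1),\ldots,(x_n,u_n) \in \gra A$ with $x_{n+1} = x_1$, the subgradient inequality in \eqref{e:subdiff} yields $\scal{x_{i+1}-x_i}{u_i} \leq f(x_{i+1}) - f(x_i)$, and summing these telescopes to $0$, so $A$ is cyclically monotone. Moreau's Theorem~\ref{t:moreau1} gives maximal monotonicity, and any strict cyclically monotone extension would \emph{a fortiori} be a strict monotone extension, contradicting that maximality.

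\textbf{Reverse inclusion.} Assuming $A$ is maximally cyclically monotone, fix any $(x_0, u_0) \in \gra A$ and define
\[
f(x) = \sup\Menge{\scal{x-x_n}{u_n} + \sum_{i=1}^{n}\scal{x_i - x_{i-1}}{u_{i-1}}}{n \in \NN,\; (x_1,u_1),\ldots,(x_n,u_n)\in\gra A}.
\]
As a pointwise supremum of continuous affine functions, $f$ is convex and lower semicontinuous. The empty chain ($n = 0$) yields $f(x_0) \geq 0$, while cyclic monotonicity applied to $(x_0,u_0),(x_1,u_1),\ldots,(x_n,u_n)$ with $x_{n+1} := x_0$ yields $f(x_0) \leq 0$, so $f(x_0) = 0$ and $f \in \Gamma_0(\HH)$. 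To embed $\gra A$ in $\gra \partial f$, fix $(x,u) \in \gra A$ and $y \in \HH$: for any admissible chain attaining a value $v > f(x) - \varepsilon$, append $(x,u)$ as a new terminal pair and evaluate the resulting expression at $y$ to obtain $f(y) \geq v + \scal{y-x}{u} > f(x) - \varepsilon + \scal{y-x}{u}$; letting $\varepsilon \downarrow 0$ gives $u \in \partial f(x)$. The forward inclusion, now applied to $f$, shows that $\partial f$ is itself cyclically monotone and contains $\gra A$; maximality of $A$ as a cyclically monotone operator therefore forces $A = \partial f$, i.e., $A \in \SL(\HH)$.

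\textbf{Main obstacle.} The delicate part is the construction in the reverse inclusion. Proving $f(x_0) < +\infty$ is the unique step where cyclic (and not merely ordinary) monotonicity is genuinely invoked, and it is precisely what makes the characterization rigid --- an analogous construction from a maximally monotone operator that is not cyclically monotone (e.g., a nonzero skew $A \in \BL(\HH)$) must fail here, and this is where one sees \emph{why}. The chain-extension argument for $\gra A \subset \gra \partial f$ also requires careful bookkeeping so that the appended pair produces exactly the subgradient inequality in the limit.
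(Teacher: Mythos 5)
Your argument is correct and is precisely the classical proof of Rockafellar cited in \cite{Rock66}: the forward inclusion via the telescoping subgradient inequalities together with Theorem~\ref{t:moreau1}, and the reverse inclusion via the ``antiderivative'' construction $f(x)=\sup\big(\scal{x-x_n}{u_n}+\sum_{i=1}^{n}\scal{x_i-x_{i-1}}{u_{i-1}}\big)$, with cyclic monotonicity used exactly where you say it is, namely to get $f(x_0)\leq 0$ and hence properness. The paper itself states the theorem without proof, so there is nothing to contrast; your write-up matches the standard argument and has no gaps.
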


The question of representing a maximally monotone operator as the
sum of a subdifferential and a remainder component is a
challenging one. In the case of a monotone matrix $A$ such a
decomposition is obtained by writing $A$ as the sum of its
symmetric part (hence a gradient) and its antisymmetric part.
This observation motivated Asplund \cite{Aspl70} to investigate
the decomposition of $A\in\ML(\HH)$ as 
\begin{equation}
\label{e:asplund}
A=G+B,\;\,\text{where}\; 
G\in\SL(\HH)\;\text{and}\;
B\in\ML(\HH)\;\text{is acyclic.}
\end{equation}
Here, acyclic means that if $B=\partial g+C$ for some 
$g\in\Gamma_0(\HH)$ and some 
$C\in\ML(\HH)$, then $g$ is affine on $\dom A$. 
A sufficient condition for Alspund's cyclic+acyclic decomposition
\eqref{e:asplund} to exist for 
$A\in\ML(\HH)$ is that $\intdom A\neq\emp$ \cite{Borw06}.
Acyclic operators are not easy to apprehend, which shows that the
notion of a maximally monotone operator remains only partially
understood. A simpler decomposition was investigated in
\cite{Bor07a,Bor07b} by imposing that $B$ in \eqref{e:asplund} 
be the restriction of a skew operator in $\BL(\HH)$. Thus, the 
so-called Borwein-Wiersma decomposition of 
$A\in\ML(\HH)$ is 
\begin{equation}
\label{e:B-W}
A=G+B,\;\,\text{where}\; 
G\in\SL(\HH)\;\text{and}\;B=S|_{\dom B},\;\text{with}\;
S\in\BL(\HH)\;\text{and}\;S^*=-S.
\end{equation}
If $A\in\ML(\HH)$ and $\gra A$ is a vector subspace, then $A$
admits a Borwein-Wiersma decomposition if and only if $\dom
A\subset\dom A^*$ \cite[Thm.~5.1]{Baus10}. Another viewpoint on
the distinction between a general maximally monotone operator and a
subdifferential is presented in \cite{Zara88}.

\subsection{Characterizations of proximity operators}
\label{sec:32}

Exploring a different facet of the discussion of 
Section~\ref{sec:31}, we focus in this section on some properties
of the class of proximity operators as a subset of that of firmly
nonexpansive operators. We first review characterization
results and then study the closedness of
$\PL(\HH)$ under various transformations.

A first natural question that arises is how to characterize those
firmly nonexpansive operators which are
proximity operators. As mentioned in Section~\ref{sec:31}, on the
real line things are straightforward: since
$\ML(\RR)=\SL(\RR)$, Theorem~\ref{t:minty2} tells us
$\FL(\RR)=\PL(\RR)$. Alternatively, 
$T\colon\RR\to\RR$ belongs to $\PL(\RR)$ if and only if it is
nonexpansive and increasing \cite{Siop07}. In general, the
characterization of subdifferential operators given in
Theorem~\ref{t:mcyclic}, together with Theorem~\ref{t:minty2},
suggests introducing a cyclic version of \eqref{e:firm} to achieve
this goal. This leads to the following characterization.

\begin{proposition}{\rm\cite{Bart07}}
\label{p:2007}
Let $T\in\FL(\HH)$. Then $T\in\PL(\HH)$ if and only if, for every
integer $n\geq 2$ and every $(x_1,\ldots,x_{n+1})\in\HH^{n+1}$
such that $x_{n+1}=x_1$, we have
$\sum_{i=1}^n\scal{x_i-Tx_i}{Tx_i-Tx_{i+1}}\geq 0$.
\end{proposition}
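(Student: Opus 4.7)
The plan is to reduce the statement to Rockafellar's characterization (Theorem~\ref{t:mcyclic}) by means of the resolvent correspondence of Theorem~\ref{t:minty2}. Since $T\in\FL(\HH)$, Theorem~\ref{t:minty2} supplies a unique $A\in\ML(\HH)$ with $T=J_A$, and $T\in\PL(\HH)$ is equivalent to $A\in\SL(\HH)$ (because $\prox_f=J_{\partial f}$ and the underlying monotone operator is uniquely determined by its resolvent). Theorem~\ref{t:mcyclic} then lets me replace the condition ``$A\in\SL(\HH)$'' by ``$A$ is maximally cyclically monotone.'' Since $A$ is already maximally monotone, any cyclically monotone proper extension of $A$ would be a monotone proper extension, contradicting maximality; hence it suffices to show that cyclic monotonicity of $A$ is equivalent to the inequality stated in the proposition.

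The key step is then the change of variables $x\mapsto(Tx,x-Tx)$. From $T=(\Id+A)^{-1}$ one obtains the parametrization $\gra A=\{(Tx,x-Tx)\mid x\in\HH\}$: the inclusion $(y,v)\in\gra A$ is equivalent to $T(y+v)=y$, so setting $x=y+v$ yields $y=Tx$ and $v=x-Tx$, while conversely every $x\in\HH$ produces a graph element. Substituting $(y_i,v_i)=(Tx_i,x_i-Tx_i)$ into the cyclic monotonicity inequality $\sum_{i=1}^n\scal{y_{i+1}-y_i}{v_i}\leq 0$ turns it verbatim into $\sum_{i=1}^n\scal{x_i-Tx_i}{Tx_i-Tx_{i+1}}\geq 0$, and the closure condition $y_{n+1}=y_1$ becomes $Tx_{n+1}=Tx_1$.

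The last point to settle is the reconciliation of the two cycle conditions ``$x_{n+1}=x_1$'' (as stated in the proposition) and ``$Tx_{n+1}=Tx_1$'' (as produced by cyclic monotonicity of $A$). This is harmless because the sum depends on $x_{n+1}$ only through $Tx_{n+1}$: in the nontrivial direction, given any cycle $(y_1,v_1),\ldots,(y_n,v_n)$ in $\gra A$ with $y_{n+1}=y_1$, I set $x_i=y_i+v_i$ for $i\in\{1,\ldots,n\}$ and simply take $x_{n+1}=x_1$, which forces $Tx_{n+1}=y_1=y_{n+1}$ and makes the proposition's hypothesis applicable. The main obstacle is really this bookkeeping around the bijection and the cycle closure; once it is laid out, both implications drop out of Theorems~\ref{t:minty2} and~\ref{t:mcyclic}.
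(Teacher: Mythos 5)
Your proof is correct and follows exactly the route the paper indicates: Proposition~\ref{p:2007} is stated there without proof (it is quoted from the cited reference), but the surrounding text explicitly presents it as the combination of Rockafellar's characterization (Theorem~\ref{t:mcyclic}) with the resolvent correspondence (Theorem~\ref{t:minty2}), which is precisely the reduction you carry out. The three points that need care---the parametrization $\gra A=\menge{(Tx,x-Tx)}{x\in\HH}$, the fact that maximal monotonicity of $A$ upgrades cyclic monotonicity to maximal cyclic monotonicity (via the $n=2$ case), and the reconciliation of the closure conditions $x_{n+1}=x_1$ versus $Tx_{n+1}=Tx_1$---are all handled correctly.
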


For our purposes, a more readily exploitable characterization is
the following result due to Moreau (see also 
Theorem~\ref{Trio9-}).

\begin{theorem}{\rm\cite{More65}}
\label{t:jjm1}
Let $T\in\NL(\HH)$ and let
$q=\|\cdot\|^2/2$. Then $T\in\PL(\HH)$ if and 
only if there exists a differentiable convex function 
$h\colon\HH\to\RR$ such that $T=\nabla h$. 
In this case, $T=\prox_{f}$, where $f=h^*-q$. 
\end{theorem}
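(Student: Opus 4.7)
The plan is to pivot on the two tools already available in the paper: Moreau's decomposition (Theorem~\ref{t:jjm0}) for the forward direction, and the six-way equivalence of Theorem~\ref{Trio9-} for the reverse direction, which packages most of the analytic content needed here.

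For the forward implication, assume $T\in\PL(\HH)$, so $T=\prox_f$ for some $f\in\Gamma_0(\HH)$. Set $h=f^*\infconv q$. Theorem~\ref{t:jjm0} states that $\prox_f=\nabla(f^*\infconv q)=\nabla h$, so $T=\nabla h$, and I only need to check that $h$ is a convex function on all of $\HH$. Convexity is immediate since $h=(f+q)^*$ is the conjugate of a proper convex function. Finiteness on $\HH$ follows from supercoercivity of $f+q$: any $f\in\Gamma_0(\HH)$ admits a continuous affine minorant, and adding $q$ yields a function whose infimum subtracted by $\scal{\cdot}{u}$ is bounded below for every $u\in\HH$, so $\dom h^{**}=\dom(f+q)^*=\HH$. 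Differentiability then comes for free from the identity $T=\nabla h$ together with the fact that $T$ is everywhere defined on $\HH$.

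For the reverse implication, suppose $T\in\NL(\HH)$ and $T=\nabla h$ with $h\colon\HH\to\RR$ convex and differentiable. Since $h$ is a finite-valued convex function on a Hilbert space, it is automatically continuous, so the standing hypotheses of Theorem~\ref{Trio9-} are satisfied. Condition~\ref{Trio9-i} of that theorem holds by assumption, and its equivalence with condition~\ref{Trio9-ix} yields that $f:=h^*-q$ lies in $\Gamma_0(\HH)$ and that $\prox_f=\nabla h=T$. Hence $T\in\PL(\HH)$, and the explicit formula $T=\prox_{h^*-q}$ claimed in the statement is exactly the content of~\ref{Trio9-ix}.

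The main obstacle, such as it is, is the bookkeeping needed to make sure the candidate $h$ in the forward direction really is a finite-valued convex function (so that Theorem~\ref{Trio9-} applies symmetrically in both directions), and dually that $f=h^*-q$ inherits properness and lower semicontinuity in the reverse direction. Both are routine once one invokes supercoercivity of $f+q$ and the fact that $q-h$ is convex (condition~\ref{Trio9-vi} of Theorem~\ref{Trio9-}), which forces $h^*$ to majorize $q$ and thus $f=h^*-q$ to be well defined with values in $\RX$. After these verifications, Theorems~\ref{t:jjm0} and~\ref{Trio9-} close the argument with essentially no additional computation.
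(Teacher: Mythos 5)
The paper states this theorem without proof, simply citing Moreau's 1965 paper, so there is no in-text argument to compare against. Your derivation from Theorem~\ref{t:jjm0} and Theorem~\ref{Trio9-} is a legitimate way to obtain it from the background results the paper does supply, and the overall architecture (Moreau's decomposition for necessity, the equivalence \ref{Trio9-i}$\Leftrightarrow$\ref{Trio9-ix} for sufficiency) is sound. The forward direction is fine: $h=f^*\infconv q=(f+q)^*$ is real-valued by the supercoercivity argument you sketch (or, more directly, because Theorem~\ref{t:jjm0} gives $h=q-f\infconv q$ and the Moreau envelope $f\infconv q$ is real-valued), it is convex as a conjugate, and Theorem~\ref{t:jjm0} hands you $\nabla h=\prox_f$ together with $h^*-q=(f+q)^{**}-q=f$.

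The one genuine flaw is in the reverse direction, where you justify the continuity of $h$ (a standing hypothesis of Theorem~\ref{Trio9-}) by asserting that a finite-valued convex function on a Hilbert space is automatically continuous. That is true in finite dimensions but false in general: a discontinuous linear functional on an infinite-dimensional $\HH$ is convex, finite everywhere, and nowhere continuous. The same oversight affects the silent upgrade from ``differentiable'' in the hypothesis to the Fr\'echet differentiability demanded by condition \ref{Trio9-i}. Both points are repaired by using the hypothesis $\nabla h=T\in\NL(\HH)$ rather than mere finiteness of $h$: the gradient inequalities $\scal{y-x}{\nabla h(x)}\leq h(y)-h(x)\leq\scal{y-x}{\nabla h(y)}$ show that
\begin{equation*}
0\leq h(x+z)-h(x)-\scal{z}{\nabla h(x)}\leq\scal{z}{\nabla h(x+z)-\nabla h(x)}\leq\|z\|^2,
\end{equation*}
so $h$ is locally Lipschitz (hence continuous) and in fact Fr\'echet differentiable with the required nonexpansive gradient. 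With that substitution the appeal to \ref{Trio9-i}$\Leftrightarrow$\ref{Trio9-ix} goes through and yields $f=h^*-q\in\Gamma_0(\HH)$ and $T=\prox_f$, completing the proof.
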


\begin{corollary}{\rm\cite{More65}}
\label{c:Moreau1965!}
Let $T\in\BL(\HH)$ be such that $\|T\|\leq 1$. Then 
$T\in\PL(\HH)$ if and only if $T$ is positive and self-adjoint.
\end{corollary}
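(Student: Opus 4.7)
The plan is to reduce everything to Theorem~\ref{t:jjm1} and Lemma~\ref{l:1}. Since $T\in\BL(\HH)$ with $\|T\|\leq 1$, $T$ is automatically nonexpansive, so the hypothesis of Theorem~\ref{t:jjm1} is met, and we only need to show that the condition ``$T=\nabla h$ for some differentiable convex $h\colon\HH\to\RR$'' is equivalent to ``$T$ is positive and self-adjoint.'' This cleanly splits the corollary into two implications.

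For the forward implication, suppose $T\in\PL(\HH)$. By Theorem~\ref{t:jjm1}, there is a differentiable convex $h\colon\HH\to\RR$ with $\nabla h=T$. Lemma~\ref{l:1} then applies verbatim: it forces $T=T^*$ and yields the explicit formula $h\colon x\mapsto h(0)+(1/2)\scal{Tx}{x}$. Convexity of $h$ (equivalently, of the quadratic form $x\mapsto\scal{Tx}{x}$) is then exactly the statement that $T$ is positive, via \eqref{e:zarantonello3}.

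For the reverse implication, assume $T=T^*$ and $\scal{Tx}{x}\geq 0$ for every $x\in\HH$. I would set
\begin{equation*}
h\colon\HH\to\RR\colon x\mapsto\tfrac{1}{2}\scal{Tx}{x},
\end{equation*}
verify Fr\'echet differentiability with $\nabla h=T$ using self-adjointness (the standard expansion $h(x+y)-h(x)=\scal{Tx}{y}+(1/2)\scal{Ty}{y}$ is $o(\|y\|)$-close to the linear map $y\mapsto\scal{Tx}{y}$), and note that positivity of $T$ makes $h$ convex. Since $T$ is nonexpansive by $\|T\|\leq 1$, Theorem~\ref{t:jjm1} concludes that $T=\prox_f$ with $f=h^*-q$, hence $T\in\PL(\HH)$.

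There is no real obstacle: the entire argument is a direct application of the two tools already established, with Lemma~\ref{l:1} supplying the rigidity that a bounded linear gradient must be self-adjoint and quadratic, while positivity is the precise linear-algebraic translation of convexity of that quadratic form. The only tiny bookkeeping issue is to check that $\nabla h=T$ genuinely requires self-adjointness (without it one only gets $\nabla h=(T+T^*)/2$), but this is handled uniformly by Lemma~\ref{l:1}.
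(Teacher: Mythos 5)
Your argument is correct and is exactly the derivation the paper intends: the corollary is stated without proof immediately after Theorem~\ref{t:jjm1}, and the natural route is precisely to combine that theorem with Lemma~\ref{l:1}, using the rigidity of bounded linear gradients for the forward direction and the explicit quadratic potential $x\mapsto\frac{1}{2}\scal{Tx}{x}$ for the converse. Both implications check out, including the observation that convexity of this quadratic is equivalent to positivity of $T$.
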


\subsection{Proximity-preserving transformations}
\label{sec:33}
A transformation which preserves firm nonexpansiveness may not be
proximity-preserving in the sense that it may not produce a 
proximity operator when applied to proximity operators. Here are
two examples.

\begin{example}[composition-based transformations]
\label{ex:20}
Transformations involving compositions are unlikely to be
proximity-preserving for a simple reason: in the linear case,
Corollary~\ref{c:Moreau1965!}
imposes that such a transformation preserve self-adjointness.
However, a product of symmetric matrices may not be symmetric.
A standard example is the Douglas-Rachford splitting operator 
$T_{A,B}$ associated with two operators $A$ and $B$ in $\ML(\HH)$ 
\cite{Livre1}, which will arise in \eqref{e:dr}. In general,
\begin{equation}
\label{e:dr7}
T_{A,B}=J_A\circ(2J_B-\Id)+\Id-J_B
=\frac{(2J_A-\Id)\circ(2J_B-\Id)+\Id}{2}
\in\JL(\HH)\smallsetminus\PL(\HH).
\end{equation}
The fact that $T_{A,B}\in\JL(\HH)$ follows from the equivalence
\ref{t:minty2i}$\Leftrightarrow$\ref{t:minty2ii} in 
Theorem~\ref{t:minty2}. On the other hand, examples when 
$T_{A,B}\notin\PL(\HH)$ for $A\in\SL(\HH)$ and $B\in\SL(\HH)$ 
can be easily constructed when $J_A$ and $J_B$ are $2\times 2$ 
matrices as explained above. In fact, when
$A$ and $B$ are linear relations in $\SL(\RR^N)$ 
($N\geq 2$), the genericity of \eqref{e:dr7} is established in 
\cite{Baus18}.
\end{example}

\begin{example}[Spingarn's partial inverse]
\label{ex:21}
Let $A\in\ML(\HH)$ and let $V$ be a closed vector subspace of
$\HH$. The partial inverse of $A$ with respect to $V$ is the 
operator $A_V\colon\HH\to 2^{\HH}$ with graph
\begin{equation}
\label{e:fiwejfweo}
\gra A_V=\menge{(\proj_Vx+\proj_{V^\bot}u,
\proj_Vu+\proj_{V^\bot}x)}{(x,u)\in\gra A}.
\end{equation}
This operator, which was introduced by Spingarn in 
\cite{Spin83}, can be regarded as an intermediate object between
$A$ and $A^{-1}$. As shown in \cite{Spin83}, 
$A\in\ML(\HH)$ $\Leftrightarrow$ $A_V\in\ML(\HH)$.
Therefore, by Theorem~\ref{t:minty2}, 
$A\in\ML(\HH)$ $\Leftrightarrow$ $J_{A_V}\in\JL(\HH)$. 
However,
\begin{equation}
\label{e:spingarn7}
A\in\SL(\HH)\quad\not\Rightarrow\quad J_{A_V}\in\PL(\HH).
\end{equation}
To see this suppose that $\HH=\RR^2$, let
$V=\menge{(\xi_1,\xi_2)\in\HH}{\xi_1=\xi_2}$, and let
$A=\partial f$, where
$f\colon(\xi_1,\xi_2)\mapsto\xi_1^2/2+\xi_1\xi_2+\xi_2^2$.
Then, for every $(\xi_1,\xi_2)\in\HH$,
$\partial f(\xi_1,\xi_2)=(\xi_1+\xi_2,\xi_1+2\xi_2)$, and
we obtain
\begin{equation}
\label{e:spingarn8}
A_V(\xi_1,\xi_2)=(2\xi_1+\xi_2,-\xi_1+2\xi_2)\quad\text{and}\quad
J_{A_V}(\xi_1,\xi_2)=\frac{1}{10}(3\xi_1-\xi_2,\xi_1+3\xi_2).
\end{equation}
Thus $J_{A_V}^*\neq J_{A_V}$ and Corollary~\ref{c:Moreau1965!} 
implies that $J_{A_V}\notin\PL(\HH)$.
\end{example}

Let us start with some simple proximity-preserving  
transformations.
\begin{proposition}
\label{p:jjm12}
Let $T\in\PL(\HH)$. Then the following hold:
\begin{enumerate}
\item
\label{p:jjm12i}
$\Id-T\in\PL(\HH)$.
\item
\label{p:jjm12ii}
Let $z\in\HH$. Then $z+T(\cdot-z)\in\PL(\HH)$.
\item
\label{p:jjm12iii}
$-T\circ(-\Id)\in\PL(\HH)$.
\item
\label{p:jjm12iv}
$J_T\in\PL(\HH)$. 
\end{enumerate}
\end{proposition}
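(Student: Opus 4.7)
The plan is to treat parts (i)--(iii) as short symbolic computations in terms of an underlying function $f\in\Gamma_0(\HH)$ with $T=\prox_f$, and to reserve the substantive argument for part (iv), where the key tool will be Theorem~\ref{t:jjm1}.

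For part (i), I would appeal directly to Moreau's decomposition (Theorem~\ref{t:jjm0}): since $T=\prox_f$ with $f\in\Gamma_0(\HH)$, we have $\Id-T=\Id-\prox_f=\prox_{f^*}$, and $f^*\in\Gamma_0(\HH)$, so $\Id-T\in\PL(\HH)$.

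For part (ii), let $g=f(\cdot-z)\in\Gamma_0(\HH)$. The substitution $y\mapsto y+z$ in the variational definition \eqref{e:dprox} gives, for every $x\in\HH$,
\begin{equation*}
\prox_g x=\underset{y\in\HH}{\text{argmin}}\bigg(f(y-z)+\frac{1}{2}\|x-y\|^2\bigg)=z+\underset{y'\in\HH}{\text{argmin}}\bigg(f(y')+\frac{1}{2}\|(x-z)-y'\|^2\bigg)=z+\prox_f(x-z),
\end{equation*}
so $z+T(\cdot-z)=\prox_g\in\PL(\HH)$. For part (iii), I would apply the same idea to $g=f\circ(-\Id)\in\Gamma_0(\HH)$ via the substitution $y\mapsto-y$, which yields $\prox_g x=-\prox_f(-x)$, hence $-T\circ(-\Id)=\prox_g\in\PL(\HH)$.

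Part (iv) is the main obstacle because $J_T=(\Id+T)^{-1}$ is an inversion rather than an algebraic manipulation. The plan is to verify the hypotheses of Theorem~\ref{t:jjm1}. First, since $T\in\PL(\HH)\subset\FL(\HH)\subset\ML(\HH)$, the resolvent $J_T$ lies in $\FL(\HH)\subset\NL(\HH)$. It remains to exhibit a differentiable convex function $h\colon\HH\to\RR$ with $J_T=\nabla h$. Using Theorem~\ref{t:jjm0} write $T=\nabla\varphi$ with $\varphi=f^*\infconv q$ convex and differentiable on $\HH$, and set $\psi=q+\varphi$. Then $\psi$ is continuous and convex on $\HH$, differentiable with $\nabla\psi=\Id+T$, and $\psi-q=\varphi$ is convex, so $\psi$ is $1$-strongly convex. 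Strong convexity (together with $\dom\psi=\HH$) makes $\psi$ supercoercive, hence $\psi^*\colon\HH\to\RR$ is finite-valued, convex, and Fr\'echet differentiable with $\nabla\psi^*=(\nabla\psi)^{-1}=(\Id+T)^{-1}=J_T$. Taking $h=\psi^*$, Theorem~\ref{t:jjm1} concludes that $J_T\in\PL(\HH)$ (explicitly, $J_T=\prox_{\psi-q}=\prox_{f^*\infconv q}$). The subtle point, where I would be most careful, is the justification that $\psi^*$ is truly everywhere differentiable and that $\nabla\psi^*$ coincides with the set-valued inverse $(\Id+T)^{-1}$ on all of $\HH$; this relies on the $1$-strong convexity of $\psi$ together with $\ran(\Id+T)=\HH$ from Minty's theorem (Theorem~\ref{t:minty1}).
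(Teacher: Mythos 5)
Your proposal is correct, and for parts \ref{p:jjm12i}--\ref{p:jjm12iii} it coincides with the paper's proof (same choices $g=f^*$, $g=f(\cdot-z)$, $g=f(-\cdot)$; the paper simply cites \cite{Smms05} for the two identities you verify by substitution, so your version is a bit more self-contained). For part \ref{p:jjm12iv} you take a genuinely different, though closely related, route. The paper performs a direct operator-calculus computation with the parallel sum: setting $g=f^*\infconv q=(f+q)^*$, it writes $J_T=T^{-1}\infconv\Id=(\Id+\partial f)\infconv\Id=\partial(f+q)\infconv\Id=(\partial g^*)\infconv\Id=\prox_g$, using only Theorem~\ref{t:jjm0} and the identity $J_A=A^{-1}\infconv\Id$. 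You instead conjugate the $1$-strongly convex function $\psi=q+f^*\infconv q$, use supercoercivity and the Fr\'echet differentiability of the conjugate of a strongly convex function to get $\nabla\psi^*=(\Id+T)^{-1}=J_T$, and then invoke Theorem~\ref{t:jjm1}; both arguments land on the same function $f^*\infconv q$, since $\psi-q=f^*\infconv q=g$. What each buys: the paper's chain is shorter and stays entirely within the set-valued calculus of \eqref{e:parasum}, never needing differentiability of a conjugate; your version makes explicit why $J_T$ is a full-domain gradient (via strong convexity and Minty surjectivity, Theorem~\ref{t:minty1}) and exhibits the mechanism of Moreau's characterization Theorem~\ref{t:jjm1} at work, at the cost of importing the standard but nontrivial fact that $\nabla\psi^*=(\partial\psi)^{-1}$ for strongly convex $\psi$. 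No gaps.
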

\begin{proof}
Let $f\in\Gamma_0(\HH)$ be such that $T=\prox_f$, and set
$q=\|\cdot\|^2/2$.

\ref{p:jjm12i}: Set $g=f^*$. Then $g\in\Gamma_0(\HH)$ and 
Theorem~\ref{t:jjm0} states that $\prox_{g}=\Id-T$.

\ref{p:jjm12ii}: Set $g=f(\cdot-z)$. Then $g\in\Gamma_0(\HH)$ and 
$\prox_{g}=z+T(\cdot-z)$ \cite{Smms05}.

\ref{p:jjm12iii}: Set $g=f(-\cdot)$. Then $g\in\Gamma_0(\HH)$ and 
$\prox_{g}=-T\circ(-\Id)$ \cite{Smms05}.

\ref{p:jjm12iv}: Since $\PL(\HH)\subset\JL(\HH)\subset\ML(\HH)$, 
$J_T$ is well defined. Now set $g=f^*\infconv q$. Then 
$g\in\Gamma_0(\HH)$ and $g=(f+q)^*$. Thus, by
Theorem~\ref{t:jjm0},
$J_T=T^{-1}\infconv\Id=(\Id+\partial f)\infconv\Id
=\partial(f+q)\infconv\Id=(\partial g^*)\infconv\Id=\prox_g$.
\end{proof}

\begin{proposition}
\label{p:jjm24}
Let $f\in\Gamma_0(\HH)$, let $\GG$ be a real Hilbert space,
and let $M\in\BL(\HH,\GG)$ be such that 
$0\in\sri(\ran M^*-\dom f)$ and $MM^*-\Id_{\!\!\GG}$ is positive.
Set $q_{\HH}=\|\cdot\|_{\HH}^2/2$ and 
$q_{\GG}=\|\cdot\|_{\GG}^2/2$. Then 
$M\pushfwd\prox_{f}\in\PL(\GG)$. More specifically,
$M\pushfwd\prox_{f}=\prox_{\varphi}$,
where $\varphi=(f+q_{\HH})\circ M^*-q_{\GG}$.
\end{proposition}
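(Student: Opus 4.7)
The plan is to identify $\varphi$ as an element of $\Gamma_0(\GG)$ and then verify the equality $\prox_\varphi = M\pushfwd\prox_f$ by matching graphs through the subdifferential. The key observation is that by Theorem~\ref{t:jjm0}, $\prox_f = J_{\partial f}$, so $\prox_f^{-1} = \Id_{\HH}+\partial f$ as set-valued operators. Unfolding the definition of the parallel composition in \eqref{e:parasum} then yields
\[
M\pushfwd\prox_{f}=\bigl(M\circ\prox_{f}^{-1}\circ M^{*}\bigr)^{-1}
=\bigl(MM^{*}+M\circ\partial f\circ M^{*}\bigr)^{-1}.
\]
Accordingly, if I can show that $\varphi\in\Gamma_0(\GG)$ and that $\Id_{\GG}+\partial\varphi=MM^{*}+M\circ\partial f\circ M^{*}$, then taking inverses will finish the proof, since $\prox_{\varphi}=(\Id_{\GG}+\partial\varphi)^{-1}$ by Theorem~\ref{t:jjm0}.

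First I would rewrite $\varphi$ in a form that exposes its convexity. A direct expansion using $q_{\HH}\circ M^{*}=\tfrac12\sscal{MM^{*}\cdot}{\cdot}$ gives
\[
\varphi=f\circ M^{*}+\tfrac12\sscal{(MM^{*}-\Id_{\GG})\cdot}{\cdot}.
\]
The quadratic term is continuous, convex, and finite-valued on $\GG$ precisely because $MM^{*}-\Id_{\GG}$ is positive, and $f\circ M^{*}$ lies in $\Gamma_0(\GG)$ since $f\in\Gamma_0(\HH)$ and the qualification $0\in\sri(\ran M^{*}-\dom f)$ guarantees in particular that $\ran M^{*}\cap\dom f\neq\emp$, so $\dom\varphi\neq\emp$. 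Hence $\varphi\in\Gamma_0(\GG)$ and $\prox_{\varphi}$ is well defined.

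Next, since the quadratic term is finite-valued and continuous with gradient $MM^{*}-\Id_{\GG}$, the subdifferential sum rule applies without any condition and gives $\partial\varphi=\partial(f\circ M^{*})+MM^{*}-\Id_{\GG}$. For the composition, the standard chain rule $\partial(f\circ M^{*})=M\circ\partial f\circ M^{*}$ holds under exactly the hypothesis $0\in\sri(\ran M^{*}-\dom f)$ (see \cite{Livre1}), which is assumed. Combining these,
\[
\Id_{\GG}+\partial\varphi=MM^{*}+M\circ\partial f\circ M^{*}
=M\circ(\Id_{\HH}+\partial f)\circ M^{*}
=M\circ\prox_{f}^{-1}\circ M^{*},
\]
and inverting both sides yields $\prox_{\varphi}=M\pushfwd\prox_{f}$, which establishes both claims.

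The main obstacle is the first step: making sure that $\varphi$ is convex despite the subtracted $q_{\GG}$. Subtracting a quadratic from a convex function generally destroys convexity, and the positivity hypothesis on $MM^{*}-\Id_{\GG}$ is engineered precisely to absorb this subtraction into the quadratic form $\tfrac12\sscal{(MM^{*}-\Id_{\GG})\cdot}{\cdot}$, keeping the sum in $\Gamma_0(\GG)$. The strong relative interior condition then plays its usual role of validating the chain rule, but this is a routine invocation once convexity and properness of $\varphi$ are in hand.
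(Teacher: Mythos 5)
Your proof is correct and follows essentially the same route as the paper: establish $\varphi\in\Gamma_0(\GG)$ by isolating the quadratic part $\tfrac12\sscal{(MM^*-\Id_{\GG})\cdot}{\cdot}$ (where positivity of $MM^*-\Id_{\GG}$ is used), invoke the chain rule under the $\sri$ qualification, and identify $M\pushfwd\prox_f$ with $(\Id_{\GG}+\partial\varphi)^{-1}=\prox_\varphi$. The only (cosmetic) difference is that the paper applies the chain rule to $(f+q_{\HH})\circ M^*$ as a whole, whereas you apply it to $f\circ M^*$ and then add the continuous quadratic term by the unconditional sum rule.
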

\begin{proof}
Set $\varphi=(f+q)\circ M^*-q$.
The assumptions imply that $f\circ M^*\in\Gamma_0(\GG)$ and
that $q_{\HH}\circ M^*-q_{\GG}\colon\GG\to\RR$ is convex and 
continuous. Consequently, $\varphi\in\Gamma_0(\GG)$ and, using 
\eqref{e:parasum} and \cite[Cor.~16.53(i)]{Livre1}, 
\begin{align}
M\pushfwd\prox_f
&=\big(M\circ(\Id_{\!\!\HH}+\partial f)\circ M^*\big)^{-1}
\nonumber\\
&=\big(M\circ\partial(f+q_{\HH})\circ M^*\big)^{-1}\nonumber\\
&=\Big(\partial\big((f+q_{\HH})\circ M^*\big)\Big)^{-1}\nonumber\\
&=\Big(\Id_{\!\!\GG}+\partial\big((f+q_{\HH})\circ M^*
-q_{\GG}\big)\Big)^{-1}
\nonumber\\
&=\prox_{\varphi},
\end{align}
as claimed.
\end{proof}

We now describe a composite proximity-preserving transformation.

\begin{proposition}
\label{p:jjm2}
Let $I$ be a nonempty finite set and put $q=\|\cdot\|_{\HH}^2/2$.
For every $i\in I$, let $\omega_i\in\RPP$,
let $\GG_i$ be a real Hilbert space with identity operator
$\Idi$, put $q_i=\|\cdot\|_{\GG_i}^2/2$,
let $\KK_i$ be a real Hilbert space,
let $L_i\in\BL(\HH,\GG_i)\smallsetminus\{0\}$,
let $M_i\in\BL(\KK_i,\GG_i)\smallsetminus\{0\}$,
let $f_i\in\Gamma_0(\GG_i)$, 
let $g_i\in\Gamma_0(\GG_i)$, and let 
$h_i\in\Gamma_0(\KK_i)$. 
Suppose that $\sum_{i\in I}\omega_i\|L_i\|^2\leq 1$ and that, for
every $i\in I$, 
\begin{equation}
\label{e:jjm41}
0\in\sri\big(\dom h_i^*-M_i^*(\dom f_i\cap\dom g_i^*)\big)
\quad\text{and}\quad
0\in\sri(\dom f_i-\dom g_i^*).
\end{equation}
Set
\begin{equation}
\label{p:jjm1}
T=\sum_{i\in I}\omega_iL_i^*\circ\Big(\prox_{f_i}\infconv
\big(\partial g_i\infconv(M_i\pushfwd\partial h_i)\big)\Big)
\circ L_i.
\end{equation}
Then $T\in\PL(\HH)$. More specifically, 
\begin{equation}
\label{p:jjm3}
T=\prox_f,\quad\text{where}\quad f=
\Bigg(\sum_{i\in I}\omega_i\Big(\big(f_i+g_i^*+h_i^*\circ
M_i^*\big)^*\infconv q_i\Big)\circ L_i\Bigg)^*-q.
\end{equation}
\end{proposition}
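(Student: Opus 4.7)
My plan is to reduce the claim to an application of Theorem~\ref{Trio9-}. The strategy has three stages: first, identify each inner parallel-sum operator on $\GG_i$ as a proximity operator $\prox_{\psi_i}$ for an explicit $\psi_i\in\Gamma_0(\GG_i)$; second, use Theorem~\ref{t:jjm0} to realize $T$ as the gradient of an explicit convex function $H$ on $\HH$; third, verify that $q-H$ is convex, so that Theorem~\ref{Trio9-} yields $T\in\PL(\HH)$ together with the formula $f=H^*-q$ claimed in \eqref{p:jjm3}.

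For the first stage, fix $i\in I$ and set $\psi_i=f_i+g_i^*+h_i^*\circ M_i^*$. Since $\prox_{f_i}=(\Id_{\!\!\GG_i}+\partial f_i)^{-1}$, $(\partial g_i)^{-1}=\partial g_i^*$, and, by definition of parallel composition, $(M_i\pushfwd\partial h_i)^{-1}=M_i\circ\partial h_i^*\circ M_i^*$, the identity $A\infconv B=(A^{-1}+B^{-1})^{-1}$ gives
\begin{equation*}
\prox_{f_i}\infconv\bigl(\partial g_i\infconv(M_i\pushfwd\partial h_i)\bigr)
=\bigl(\Id_{\!\!\GG_i}+\partial f_i+\partial g_i^*
+M_i\circ\partial h_i^*\circ M_i^*\bigr)^{-1}.
\end{equation*}
The first sri condition in \eqref{e:jjm41} licenses the chain rule $\partial(h_i^*\circ M_i^*)=M_i\circ\partial h_i^*\circ M_i^*$, and, together with the second one, the sum rule $\partial\psi_i=\partial f_i+\partial g_i^*+\partial(h_i^*\circ M_i^*)$ (standard subdifferential calculus in $\Gamma_0(\GG_i)$ as in \cite[Ch.~16]{Livre1}). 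Hence the inner operator equals $J_{\partial\psi_i}=\prox_{\psi_i}$.

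For the second stage, Theorem~\ref{t:jjm0} produces $\prox_{\psi_i}=\nabla\eta_i$, where $\eta_i=\psi_i^*\infconv q_i$ is continuous, convex, and Fr\'echet differentiable on $\GG_i$. The chain rule for Fr\'echet gradients then yields $L_i^*\circ\prox_{\psi_i}\circ L_i=\nabla(\eta_i\circ L_i)$, whence $T=\nabla H$ with
\begin{equation*}
H=\sum_{i\in I}\omega_i\,\eta_i\circ L_i
=\sum_{i\in I}\omega_i\bigl((f_i+g_i^*+h_i^*\circ M_i^*)^*\infconv q_i\bigr)\circ L_i\in\Gamma_0(\HH),
\end{equation*}
which is the function whose conjugate appears in \eqref{p:jjm3}.

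For the third stage, I decompose
\begin{equation*}
q-H=\biggl(q-\sum_{i\in I}\omega_i\,q_i\circ L_i\biggr)
+\sum_{i\in I}\omega_i\,(q_i-\eta_i)\circ L_i.
\end{equation*}
The first bracketed term is the quadratic form associated with $\Id-\sum_{i\in I}\omega_i L_i^*L_i$, which is positive because $\sum_{i\in I}\omega_i L_i^*L_i\preceq\bigl(\sum_{i\in I}\omega_i\|L_i\|^2\bigr)\Id\preceq\Id$ by hypothesis. Each summand of the second term is convex: since $\prox_{\psi_i}=\nabla\eta_i\in\FL(\GG_i)$, the equivalence \ref{Trio9-v}$\Leftrightarrow$\ref{Trio9-vi} of Theorem~\ref{Trio9-} forces $q_i-\eta_i$ to be convex, and post-composition with the linear map $L_i$ preserves convexity. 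Thus $q-H$ is convex, so the implication \ref{Trio9-vi}$\Rightarrow$\ref{Trio9-ix} of Theorem~\ref{Trio9-} delivers $T=\nabla H=\prox_f$ with $f=H^*-q$, as required. The main obstacle is the bookkeeping for the sri qualifications in \eqref{e:jjm41} needed to invoke the sum and chain rules for subdifferentials in the first stage; once the inner operator is recognized as $\prox_{\psi_i}$, the remainder is a chain of applications of Theorems~\ref{t:jjm0} and \ref{Trio9-}.
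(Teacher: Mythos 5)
Your proposal is correct and follows essentially the same route as the paper: the same subdifferential calculus (sum and chain rules licensed by \eqref{e:jjm41}) identifies the inner parallel sum as $\prox_{f_i+g_i^*+h_i^*\circ M_i^*}$, and $T$ is then exhibited as the gradient of the same convex function $H=\sum_{i\in I}\omega_i\big(\big(f_i+g_i^*+h_i^*\circ M_i^*\big)^*\infconv q_i\big)\circ L_i$. The only divergence is the closing step: the paper checks that $\nabla H$ has Lipschitz constant $\sum_{i\in I}\omega_i\|L_i\|^2\leq 1$ and concludes via Theorem~\ref{t:jjm1}, whereas you check that $q-H$ is convex and conclude via the implication \ref{Trio9-vi}$\Rightarrow$\ref{Trio9-ix} of Theorem~\ref{Trio9-}; these criteria are equivalent by Theorem~\ref{Trio9-} itself, and both rest on the same bound, namely the positivity of $\Id-\sum_{i\in I}\omega_iL_i^*L_i$.
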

\begin{proof}
The fact that $f\in\Gamma_0(\HH)$ follows from standard convex
analysis \cite{Livre1}. Now let $i\in I$. We derive from
\eqref{e:parasum}, \eqref{e:jjm41}, 
\cite[Cor.~16.30 and Thm.~16.47(i)]{Livre1}, and
Theorem~\ref{t:jjm1} that
\begin{align}
\label{e:jjm40}
\prox_{f_i}\infconv
\big(\partial g_i\infconv(M_i\pushfwd\partial h_i)\big)
&=\Big(\Idi+\partial f_i+\big(\partial g_i\infconv
(M_i\pushfwd\partial h_i)\big)^{-1}\Big)^{-1}\nonumber\\
&=\Big(\Idi+\partial f_i+(\partial g_i)^{-1}+
M_i\circ(\partial h_i)^{-1}\circ M_i^*\Big)^{-1}\nonumber\\
&=\Big(\Idi+\partial f_i+\partial g_i^*+
M_i\circ\partial h_i^*\circ M_i^*\Big)^{-1}\nonumber\\
&=\Big(\Idi+\partial\big(f_i+g_i^*+
h_i^*\circ M_i^*\big)\Big)^{-1}\nonumber\\
&=\prox_{f_i+g_i^*+h_i^*\circ M_i^*}\nonumber\\
&=\nabla\Big(\big(f_i+g_i^*+h_i^*\circ M_i^*\big)^*\infconv q_i
\Big).
\end{align}
Since $\prox_{f_i+g_i^*+h_i^*\circ M_i^*}\in\NL(\HH)$, 
\begin{equation}
\label{e:jjm42}
\nabla\Big(\Big(\big(f_i+g_i^*+h_i^*\circ 
M_i^*\big)^*\infconv q_i\Big)\circ L_i\Big)=
L_i^*\circ\prox_{f_i+g_i^*+h_i^*\circ M_i^*}\circ L_i
\end{equation}
has Lipschitz constant $\|L_i\|^2$. Altogether, 
\begin{equation}
\label{e:jjm7}
T=\sum_{i\in I}\omega_iL_i^*\circ\Big(\prox_{f_i}\infconv
\big(\partial g_i\infconv(M_i\pushfwd\partial h_i)\big)\Big)
\circ L_i 
=\nabla\Bigg(\sum_{i\in I}\omega_i\Big(\big(f_i+g_i^*+
h_i^*\circ M_i^*\big)^*\infconv q_i\Big)\circ L_i\Bigg)
\end{equation}
has Lipschitz constant $\sum_{i\in I}\omega_i\|L_i\|^2\leq 1$. 
In view of Theorem~\ref{t:jjm1}, the proof is complete.
\end{proof}

\begin{remark}
Let us highlight some special cases of 
Proposition~\ref{p:jjm2}. $\GG$ is a real Hilbert
space.
\label{r:jjm1}
\begin{enumerate}
\item
\label{r:jjm1i}
Let $(T_i)_{i\in I}$ be a finite family in $\PL(\HH)$
and let $(\omega_i)_{i\in I}$ be a finite family in $\rzeroun$
such that $\sum_{i\in I}\omega_i=1$. Then 
$\sum_{i\in I}\omega_iT_i\in\PL(\HH)$. This result is due to
Moreau \cite{Mor63a}. Connections with the proximal average are
discussed in \cite{Livre1}.
\item
\label{r:jjm1ix}
In \ref{r:jjm1i}, taking $I=\{1,2\}$, $T_1=T$, $T_2=\Id$, and
$\omega_1=\lambda\in\zeroun$ yields
$\Id+\lambda(T-\Id)\in\PL(\HH)$. The fact that the
under-relaxation of a proximity operator is a proximity 
operator 
appears in \cite{Smms05}. More precisely, it is shown there
that if $T=\prox_h$ for some $h\in\Gamma_0(\HH)$,
then $\Id+\lambda(T-\Id)=\prox_{f}$, where
$f=h\infconv(\lambda q)/(1-\lambda)$, which can now be seen 
as a consequence of \eqref{p:jjm3}.
\item
\label{r:jjm1viii}
Let $T_1$ and $T_2$ be in $\PL(\HH)$. Then
$(T_1-T_2+\Id)/2\in\PL(\HH)$. Indeed,
Proposition~\ref{p:jjm12}\ref{p:jjm12i} asserts that
$\Id-T_2\in\PL(\HH)$ and then \ref{r:jjm1i} that the average 
of $T_1$ and $\Id-T_2$ is also in $\PL(\HH)$.
\item
\label{r:jjm1vi}
Let $T\in\PL(\GG)$ and let $L\in\BL(\HH,\GG)$ be such that
$\|L\|\leq 1$. Then $L^*\circ T\circ L\in\PL(\HH)$.
\item
\label{r:jjm1vii}
Let $T\in\PL(\HH)$ and let $V$ be a closed vector subspace of
$\HH$. Then it follows from \ref{r:jjm1vi} that
$\proj_V\circ T\circ \proj_V\in\PL(\HH)$.
\item
\label{r:jjm1xii}
Suppose that $u\in\HH$ satisfies $0<\|u\|\leq 1$ and let
$R\in\PL(\RR)$. Set
$L\colon\HH\to\RR\colon x\mapsto\scal{x}{u}$ and
$T\colon\HH\to\HH\colon x\mapsto(R\scal{x}{u})u$. Then
\ref{r:jjm1vi} yields $T\in\PL(\HH)$.
\item
\label{r:jjm1v}
Let $M\in\BL(\GG,\HH)\smallsetminus\{0\}$,
let $f\in\Gamma_0(\HH)$, 
let $g\in\Gamma_0(\HH)$, and let 
$h\in\Gamma_0(\GG)$ be such that 
$0\in\sri(\dom h^*-M^*(\dom f\cap\dom g^*))$
and $0\in\sri(\dom f-\dom g^*)$.
Then $\prox_{f}\infconv(\partial g\infconv
(M\pushfwd\partial h))\in\PL(\HH)$. More specifically,
$\prox_{f}\infconv(\partial g\infconv
(M\pushfwd\partial h))=\prox_{f+g^*+h^*\circ M^*}$.
\item
\label{r:jjm1xi}
In \ref{r:jjm1v}, suppose that, in addition,
$g=\varphi^*\infconv q_{\HH}$ and $h=\psi^*\infconv q_{\GG}$, 
where $\varphi\in\Gamma_0(\HH)$ and $\psi\in\Gamma_0(\GG)$. Then
$\partial g=\{\prox_\varphi\}$, $\partial h=\{\prox_\psi\}$, and
we conclude that 
$\prox_{f}\infconv(\prox_\varphi\infconv
(M\pushfwd\prox_\psi))\in\PL(\HH)$. More specifically,
\begin{equation}
\prox_{f}\infconv\big(\prox_\varphi\infconv
(M\pushfwd\prox_\psi)\big)=\prox_{f+\varphi+(\psi+q_{\GG})
\circ M^*+q_{\HH}}
=\prox_{(f+\varphi+(\psi+q_{\GG})\circ M^*)/2}(\cdot/2)
\end{equation}
has Lipschitz constant $1/2$.
\item
In \ref{r:jjm1v}, suppose that, in addition, $\GG=\HH$,
$M=\Id$, $g=\varphi^*\infconv q$, and $h=\iota_{\{0\}}$, where
$\varphi\in\Gamma_0(\HH)$. Then
$\partial g=\{\prox_\varphi\}$, $\partial h=\{0\}^{-1}$, and we
conclude that 
$\prox_{f}\infconv\prox_\varphi\in\PL(\HH)$. More specifically,
\begin{equation}
\label{e:jjm44}
\prox_{f}\infconv\prox_\varphi
=\prox_{f+\varphi+q}
=\prox_{(f+\varphi)/2}(\cdot/2)
\end{equation}
has Lipschitz constant $1/2$. This result
appears in \cite[Cor.~25.35]{Livre1}.
\end{enumerate}
\end{remark}

Proposition~\ref{p:jjm2} allows us to interpret some algorithms
as simple instances of the standard proximal point algorithm
\eqref{e:martinet1} for convex minimization. 

\begin{example}
\label{ex:jjm83}
Let $K$ be a closed convex cone in $\HH$ with polar cone 
$K^\ominus$, let $V$ be a closed vector subspace of $\HH$,
and set
\begin{equation}
\label{e:f}
f=\bigg(\frac{1}{2}d_{K^\ominus}^2\circ \proj_V\bigg)^*-
\dfrac{\|\cdot\|^2}{2}
\end{equation}
and $T=\proj_V\circ \proj_K\circ \proj_V$.
Then it follows from Proposition~\ref{p:jjm2}
(see also Remark~\ref{r:jjm1}\ref{r:jjm1vii}) that $T=\prox_f$.
Now let $x_0\in V$ and consider the proximal point iterations
$(\forall n\in\NN)$ $x_{n+1}=\prox_f x_n$. Then the sequence
$(x_n)_{n\in\NN}$ is identical to that produced by the alternating
projection
algorithm $(\forall n\in\NN)$ $x_{n+1}=(\proj_V\circ\proj_K)x_n$. 
In \cite{Hund04}, a specific choice of $x_0$, $K$, and $V$ (the
latter being a closed hyperplane) lead to a sequence
$(x_n)_{n\in\NN}$ that was shown to converge weakly but not
strongly to the unique point in $K\cap V$, namely $0$. In turn,
\eqref{e:f} is a new example of a function for which the proximal
point algorithm converges weakly but not strongly. Alternative
constructions can be found in \cite{Baus04,Gule91}.
\end{example}

\begin{example}
\label{ex:jjm23}
Let $(C_i)_{i\in I}$ be a finite family of nonempty closed convex
subsets of $\HH$. The convex feasibility problem is to find a
point in $\bigcap_{i\in I}C_i$. When this problem has no
solution, a situation that arises frequently in signal recovery
due to inaccurate prior knowledge or measurement errors
\cite{Sign94}, one must find a surrogate minimization problem. 
Let us note that the standard method of periodic projections used
in consistent problems is of little value here as the limit 
cycles it generates do not minimize any function \cite{Jfan12}.
Let $x_0\in\HH$ and $\varepsilon\in\zeroun$. 
In \cite{Sign94}, it was proposed to minimize 
$(1/2)\sum_{i\in I}\omega_id_{C_i}^2$, where $(\omega_i)_{i\in I}$ 
are in $\rzeroun$ and satisfy $\sum_{i\in I}\omega_i=1$, via the
parallel projection method
\begin{equation}
\label{e:ppm}
(\forall n\in\NN)\quad
x_{n+1}=x_n+\lambda_n\Bigg(\sum_{i\in I}\omega_i
\proj_{C_i}x_n-x_n\Bigg),\quad\text{where}\quad
\varepsilon\leq\lambda_n\leq(2-\varepsilon).
\end{equation}
Now set $f=(\sum_{i\in I}\omega_i(\sigma_{C_i}\infconv q))^*-q$
and apply Proposition~\ref{p:jjm2} with $(\forall i\in I)$
$\GG_i=\KK_i=\HH$, $L_i=M_i=\Id$, $f_i=\iota_{C_i}$,
and $g_i=h_i=\iota_{\{0\}}$.
Then $\prox_f=\sum_{i\in I}\omega_i \proj_{C_i}$, and 
\eqref{e:ppm} therefore turns out to be just a relaxed instance 
of Martinet's proximal point algorithm \eqref{e:martinet1}. 
\end{example}

As noted in Example~\ref{ex:20}, a composition of proximity
operators is usually not a proximity operator. Likewise, the sum
of two proximity operators may not be in $\NL(\HH)$ and therefore
not in $\PL(\HH)$. The following propositions provide some
exceptions. We start with the identity
$\prox_{f_1}\circ\prox_{f_2}=\prox_{f_1+f_2}$, which is also
discussed in special cases in \cite{Siop07,Jsts07,Smms05,Yuyl13}.

\begin{proposition}
\label{p:kj1}
Let $T_1$ and $T_2$ be in $\PL(\HH)$, say $T_1=\prox_{f_1}$ and 
$T_2=\prox_{f_2}$ for some $f_1$ and $f_2$ in $\Gamma_0(\HH)$.
Suppose that $\dom f_1\cap\dom f_2\neq\emp$ and that
one of the following holds:
\begin{enumerate}
\item
\label{p:kj1-i}
$\HH=\RR$.
\item
\label{p:kj1i}
$(\forall x\in\dom\partial f_2)$ 
$\partial f_2(x)\subset\partial f_2(T_1 x)$.
\item
\label{p:kj1ii}
$(\forall (x,u)\in\gra\partial {f_1})$ 
$\partial f_2(x+u)\subset\partial f_2(x)$.
\item
\label{p:kj1iv}
$0\in\sri(\dom {f_1}-\dom f_2)$ and 
$(\forall (x,u)\in\gra\partial {f_1})$ 
$\partial f_2(x)\subset\partial f_2(x+u)$.
\end{enumerate}
Then $T_1\circ T_2\in\PL(\HH)$. More specifically, in cases
{\rm\ref{p:kj1i}--\ref{p:kj1iv}}, $T_1\circ T_2=\prox_{f_1+f_2}$.
\end{proposition}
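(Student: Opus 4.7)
The plan is to use the Moreau characterization \eqref{e:moreau2}: for $f\in\Gamma_0(\HH)$ and $x,p\in\HH$, $p=\prox_f x\Leftrightarrow x-p\in\partial f(p)$. For each $x\in\HH$, set $y=T_2x$ and $p=T_1y=(T_1\circ T_2)(x)$, so that
\begin{equation}
\label{e:plan1}
x-y\in\partial f_2(y)\quad\text{and}\quad y-p\in\partial f_1(p).
\end{equation}
Proving $T_1\circ T_2=\prox_{f_1+f_2}$ in cases \ref{p:kj1i}--\ref{p:kj1iv} thus reduces to showing $x-p\in\partial(f_1+f_2)(p)$ for every $x\in\HH$. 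The natural route is to add the two inclusions in \eqref{e:plan1}; the obstacle is that $x-y$ is known to live in $\partial f_2(y)$ rather than $\partial f_2(p)$, and each of the hypotheses \ref{p:kj1i}--\ref{p:kj1iv} is engineered to bridge this gap.

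For case \ref{p:kj1i}, the first inclusion in \eqref{e:plan1} ensures $y\in\dom\partial f_2$, so the hypothesis yields $\partial f_2(y)\subseteq\partial f_2(T_1y)=\partial f_2(p)$. For case \ref{p:kj1ii}, the pair $(p,y-p)\in\gra\partial f_1$ combined with the hypothesis gives $\partial f_2(y)=\partial f_2(p+(y-p))\subseteq\partial f_2(p)$. In both cases $x-y\in\partial f_2(p)$, whence
\begin{equation}
x-p=(x-y)+(y-p)\in\partial f_2(p)+\partial f_1(p)\subseteq\partial(f_1+f_2)(p),
\end{equation}
and \eqref{e:moreau2} delivers $p=\prox_{f_1+f_2}x$, as desired.

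Case \ref{p:kj1iv} is the main difficulty because the inclusion $\partial f_2(x)\subseteq\partial f_2(x+u)$ points in the wrong direction to transport $x-y$ from $\partial f_2(y)$ into $\partial f_2(p)$. The remedy is to argue in reverse, starting from $q=\prox_{f_1+f_2}x$. The condition $0\in\sri(\dom f_1-\dom f_2)$ supplies the sum rule $\partial(f_1+f_2)=\partial f_1+\partial f_2$ from standard convex calculus, so $x-q=u_1+u_2$ with $u_1\in\partial f_1(q)$ and $u_2\in\partial f_2(q)$. Applying the hypothesis to $(q,u_1)\in\gra\partial f_1$ gives $u_2\in\partial f_2(q)\subseteq\partial f_2(q+u_1)$, and since $x-(q+u_1)=u_2$, \eqref{e:moreau2} forces $q+u_1=\prox_{f_2}x=y$. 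Hence $u_1=y-q\in\partial f_1(q)$, which by \eqref{e:moreau2} yields $q=\prox_{f_1}y=T_1(T_2x)$.

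Case \ref{p:kj1-i} is immediate: as recalled in Section~\ref{sec:32}, $\PL(\RR)$ coincides with the set of nonexpansive increasing self-maps of $\RR$, and this class is stable under composition, so $T_1\circ T_2\in\PL(\RR)$ (though no specific identification with $\prox_{f_1+f_2}$ is claimed in this case).
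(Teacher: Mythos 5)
Your proof is correct. For case \ref{p:kj1-i} it coincides with the paper's argument (composition of nonexpansive increasing functions on $\RR$). For cases \ref{p:kj1i}--\ref{p:kj1iv} the paper gives no argument at all, simply citing \cite[Prop.~24.18]{Livre1}; you have in effect reconstructed the proof of that cited result, and done so soundly. The forward argument for \ref{p:kj1i} and \ref{p:kj1ii} is clean: the characterization \eqref{e:moreau2} puts $x-y$ in $\partial f_2(y)$ and $y-p$ in $\partial f_1(p)$, each hypothesis transports $x-y$ into $\partial f_2(p)$, and the always-valid inclusion $\partial f_1(p)+\partial f_2(p)\subset\partial(f_1+f_2)(p)$ finishes. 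Your recognition that \ref{p:kj1iv} cannot be handled the same way --- because the hypothesis moves subgradients in the wrong direction --- and your reversal of the argument starting from $q=\prox_{f_1+f_2}x$, using the qualification $0\in\sri(\dom f_1-\dom f_2)$ precisely where the exact sum rule $\partial(f_1+f_2)=\partial f_1+\partial f_2$ is needed, is exactly the right mechanism. The only point worth making explicit is that $f_1+f_2\in\Gamma_0(\HH)$ (hence $\prox_{f_1+f_2}$ is well defined) because $\dom f_1\cap\dom f_2\neq\emp$; this is where that standing hypothesis enters.
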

\begin{proof}
\ref{p:kj1-i}: A function $T\colon\RR\to\RR$ belongs to 
$\PL(\RR)$ if and only if it is nonexpansive and increasing 
\cite{Siop07}. Since the composition of nonexpansive and
increasing functions is likewise, we obtain the claim. 

\ref{p:kj1i}--\ref{p:kj1iv}: \cite[Prop.~24.18]{Livre1}.
\end{proof}

\begin{proposition}
\label{p:maubert}
Let $C$ be a nonempty closed convex subset of $\HH$, let
$\phi\in\Gamma_0(\RR)$ be even, set 
$\varphi=\phi\circ\|\cdot\|+\sigma_C$, 
$T_1=\prox_{\phi\circ\|\cdot\|}$, and $T_2=\prox_{\sigma_C}$. 
Then $T_1\circ T_2\in\PL(\HH)$. More specifically,
$T_1\circ T_2=\prox_\varphi$. 
\end{proposition}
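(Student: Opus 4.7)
The plan is to invoke Proposition~\ref{p:kj1}\ref{p:kj1i} with $f_1=\phi\circ\|\cdot\|$ and $f_2=\sigma_C$. Both functions lie in $\Gamma_0(\HH)$, and $\dom f_1\cap\dom f_2\ni 0$, so the only substantive task is to verify the inclusion $\partial f_2(x)\subset\partial f_2(T_1x)$ for every $x\in\dom\partial f_2$.

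First I would establish a radial form for $T_1$. Evenness and convexity of $\phi$ force $\phi$ to be finite at $0$, minimized there, and nondecreasing on $\RP$, so $f_1\in\Gamma_0(\HH)$ and $f_1$ is invariant under every orthogonal transformation of $\HH$. Given $x\neq 0$, decomposing $y=t\,x/\|x\|+w$ with $w\perp x$ in the Moreau minimization that defines $\prox_{f_1}x$ and exploiting the monotonicity of $\phi$ on $\RP$, one checks that the optimal $w$ must vanish and that the optimal scalar $t$ is nonnegative. Combined with $T_1(0)=0$ (which follows from strict convexity of the quadratic $\|\cdot\|^2/2$ together with $\phi\geq\phi(0)$), this yields $T_1x=\alpha(x)\,x$ with $\alpha(x)\in\RP$.

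Next I would assemble two standard features of the support function. Positive homogeneity of $\sigma_C$ of degree one makes $\partial\sigma_C$ positively homogeneous of degree zero, so $\partial\sigma_C(\lambda x)=\partial\sigma_C(x)$ for every $\lambda>0$; and $\partial\sigma_C(0)=C$, while $\partial\sigma_C(x)\subset C$ for every $x\in\dom\partial\sigma_C$. Combining these with the radial identity: if $\alpha(x)>0$ then $\partial\sigma_C(T_1x)=\partial\sigma_C(x)$, and if $\alpha(x)=0$ then $\partial\sigma_C(T_1x)=\partial\sigma_C(0)=C\supset\partial\sigma_C(x)$. Either way, the required inclusion holds, and Proposition~\ref{p:kj1}\ref{p:kj1i} delivers $T_1\circ T_2=\prox_{f_1+f_2}=\prox_\varphi$.

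The only delicate step is the radial identity $T_1x=\alpha(x)\,x$; everything else is routine bookkeeping with the degree-zero homogeneity of $\partial\sigma_C$ and its containment in $C$.
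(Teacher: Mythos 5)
Your proof is correct, but it follows a genuinely different route from the paper. The paper proves the identity by direct computation: it quotes the explicit closed form \eqref{e:maubert1} for $\prox_{\phi\circ\|\cdot\|+\sigma_C}$ from \cite[Prop.~2.2]{Nmtm09}, specializes it to $C=\{0\}$ to get \eqref{e:maubert2} for $T_1$, and then substitutes $\prox_{\sigma_C}=\Id-\proj_C$ (so that $\|\prox_{\sigma_C}x\|=d_C(x)$) to match the two expressions; the case of constant $\phi$ is treated separately because the quoted formula excludes it. You instead reduce the statement to the general composition criterion of Proposition~\ref{p:kj1}\ref{p:kj1i} and verify the subdifferential inclusion $\partial\sigma_C(x)\subset\partial\sigma_C(T_1x)$ via two structural facts: the radial form $T_1x=\alpha(x)x$ with $\alpha(x)\in\RP$ (your perpendicular-decomposition argument for this is sound, as is $T_1(0)=0$), and the degree-zero positive homogeneity of $\partial\sigma_C$ together with $\partial\sigma_C(0)=C\supset\partial\sigma_C(x)$. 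Both cases $\alpha(x)>0$ and $\alpha(x)=0$ are handled correctly, and the hypotheses of Proposition~\ref{p:kj1} ($f_1,f_2\in\Gamma_0(\HH)$, $0\in\dom f_1\cap\dom f_2$) are easily met. What each approach buys: the paper's computation produces the explicit formula \eqref{e:maubert1}, which is what the subsequent remark actually exploits for the thresholding applications \eqref{e:ecp} and \eqref{e:ecp7}; your argument is more self-contained (no appeal to \cite{Nmtm09}, no case split on constant $\phi$), ties the proposition back to the general theory developed just before it, and exposes the structural reason the composition rule applies --- a radial proximity operator preserves directions, and $\partial\sigma_C$ depends only on direction --- but it does not by itself yield the closed-form expression for $\prox_\varphi$.
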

\begin{proof}
Let $x\in\HH$. If $\phi$ is constant, then $T_1=\Id$ and the result
is trivially true. We therefore assume otherwise, which allows us to 
derive from \cite[Prop.~2.2]{Nmtm09} that
\begin{equation}
\label{e:maubert1}
\prox_{\varphi}x=
\begin{cases}
\displaystyle{\frac{\prox_{\phi}d_C(x)}{d_C(x)}}
(x-\proj_Cx),&\text{if}\;\;d_C(x)>
\text{max\,\Argmin}\,\phi;\\
x-\proj_Cx,&\text{if}\;\;d_C(x)\leq{\text{max\,Argmin}}\,\phi.
\end{cases}
\end{equation}
For $C=\{0\}$, this yields 
\begin{equation}
\label{e:maubert2}
T_1x=
\begin{cases}
\displaystyle{\frac{\prox_{\phi}\|x\|}{\|x\|}}
x,&\text{if}\;\;\|x\|>
\text{max\,\Argmin}\,\phi;\\
x,&\text{if}\;\;\|x\|\leq{\text{max\,Argmin}}\,\phi.
\end{cases}
\end{equation}
Since Theorem~\ref{t:jjm0} yields $\Id-\proj_C=\prox_{\sigma_C}$, 
using \eqref{e:maubert1} and \eqref{e:maubert2}, we get
\begin{align}
\prox_{\varphi}x
&=
\begin{cases}
\displaystyle{\frac{\prox_{\phi}\|\prox_{\sigma_C}x\|}
{\|\prox_{\sigma_C}x\|}}
\prox_{\sigma_C}x,&\text{if}\;\;\|\prox_{\sigma_C}x\|>
\text{max\,\Argmin}\,\phi;\\
\prox_{\sigma_C}x,&\text{if}\;\;
\|\prox_{\sigma_C}x\|\leq{\text{max\,Argmin}}\,\phi
\end{cases}\nonumber\\
&=T_1(T_2x).
\end{align}
\end{proof}

\begin{remark}
Proposition~\ref{p:maubert} has important applications.
\begin{enumerate}
\item
It follows from \cite[Prop.~3.2(v)]{Siop07} that, if $\phi$ is
differentiable at $0$ with $\phi'(0)=0$, then $\prox_\varphi$,
which can be implemented explicitly via \eqref{e:maubert1}, is a
proximal thresholder on $C$: $(\forall x\in\HH)$ 
$\prox_\varphi x=0$ $\Leftrightarrow$ $x\in C$.
\item
Let $\gamma\in\RPP$, let $K$ be a nonempty closed convex 
cone in $\HH$, let $C$ be the polar cone of $K$, and let $x\in\HH$. 
Upon setting $\phi=\gamma|\cdot|$ in Proposition~\ref{p:maubert}
and using \cite[Examp.~24.20]{Livre1}, we obtain (see 
\cite[Lemma~2.2]{Jmaa18} for a different derivation)
\begin{equation}
\label{e:ecp}
\prox_{\gamma\|\cdot\|+\iota_K}x=
\big(\prox_{\gamma\|\cdot\|}\circ\proj_K\big)x=
\begin{cases}
\dfrac{\|\proj_Kx\|-\gamma}{\|\proj_Kx\|}\proj_Kx,
&\text{if}\;\;\|\proj_Kx\|>\gamma;\\
0,&\text{if}\;\;\|\proj_Kx\|\leq\gamma.
\end{cases}
\end{equation}
On the other hand, setting $\phi=\iota_{[-\gamma,\gamma]}$  in
Proposition~\ref{p:maubert} and using \cite[Examp.~3.18]{Livre1}, 
we obtain (see \cite[Sec.~7]{Buim17} for different derivations)
\begin{equation}
\label{e:ecp7}
\proj_{B(0;\gamma)\cap K}x=
\big(\proj_{B(0;\gamma)}\circ\proj_K\big)x=
\begin{cases}
\dfrac{\gamma}{\|\proj_Kx\|}\proj_Kx,
&\text{if}\;\;\|\proj_Kx\|>\gamma;\\
\proj_Kx,&\text{if}\;\;\|\proj_Kx\|\leq\gamma.
\end{cases}
\end{equation}
\end{enumerate}
\end{remark}

\begin{proposition}
\label{p:eduardo3}
Set $q=\|\cdot\|^2/2$, and let $T_1$ and $T_2$ be in $\PL(\HH)$,
say $T_1=\prox_{f_1}$ and 
$T_2=\prox_{f_2}$ for some $f_1$ and $f_2$ in $\Gamma_0(\HH)$.
Suppose that $0\in\sri(\dom f_1^*-\dom f_2^*)$ and that
\begin{equation}
\label{e:eduardo3}
(f_1^*+f_2^*)\infconv q=f_1^*\infconv q+f_2^*\infconv q.
\end{equation}
Then $T_1+T_2\in\PL(\HH)$. More specifically,
$T_1+T_2=\prox_{f_1\infconv f_2}$.
\end{proposition}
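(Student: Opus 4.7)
The plan is to express both sides of the claimed identity as gradients of Moreau envelopes and match them using the hypothesis \eqref{e:eduardo3}.

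First, I would invoke Theorem~\ref{t:jjm0} for each $f_i$ to rewrite $T_i=\prox_{f_i}=\nabla(f_i^*\infconv q)$ for $i\in\{1,2\}$. Adding and using additivity of the gradient yields $T_1+T_2=\nabla(f_1^*\infconv q+f_2^*\infconv q)$, which the hypothesis \eqref{e:eduardo3} immediately rewrites as
\begin{equation}
T_1+T_2=\nabla\big((f_1^*+f_2^*)\infconv q\big).
\end{equation}

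Second, I would identify this right-hand side with $\prox_{f_1\infconv f_2}$ by running Moreau's decomposition in reverse. This requires two ingredients: the conjugate identity $(f_1\infconv f_2)^*=f_1^*+f_2^*$ and the membership $f_1\infconv f_2\in\Gamma_0(\HH)$. The conjugate identity follows unconditionally from the definition of the Legendre--Fenchel transform applied to an infimal convolution and a change of summation variable. For the $\Gamma_0$ membership, I would invoke the standard Attouch--Br\'ezis-type conjugate duality available in \cite{Livre1}: under the qualification condition $0\in\sri(\dom f_1^*-\dom f_2^*)$, one has $(f_1^*+f_2^*)^*=f_1^{**}\infconv f_2^{**}=f_1\infconv f_2$, and therefore $f_1\infconv f_2\in\Gamma_0(\HH)$. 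A second application of Theorem~\ref{t:jjm0}, now to $f_1\infconv f_2$, then gives $\prox_{f_1\infconv f_2}=\nabla((f_1\infconv f_2)^*\infconv q)=\nabla((f_1^*+f_2^*)\infconv q)$, matching the display above, whence $T_1+T_2=\prox_{f_1\infconv f_2}\in\PL(\HH)$.

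The only delicate step is confirming that $f_1\infconv f_2$ genuinely lies in $\Gamma_0(\HH)$: without the strong relative interior hypothesis, the infimal convolution could fail to be lower semicontinuous or proper, and the backward direction of Moreau's decomposition would be inapplicable. Beyond this qualification check, the argument reduces to a short chain of substitutions built on Theorem~\ref{t:jjm0}, the additivity of the gradient, and the hypothesis \eqref{e:eduardo3}.
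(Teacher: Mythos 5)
Your argument is correct and follows essentially the same route as the paper: both express $T_1+T_2$ as $\nabla(f_1^*\infconv q)+\nabla(f_2^*\infconv q)$, invoke \eqref{e:eduardo3} to pass to $\nabla((f_1^*+f_2^*)\infconv q)=\nabla((f_1\infconv f_2)^*\infconv q)$, and use the strong relative interior condition (via \cite[Prop.~15.7(i)]{Livre1}) to guarantee $f_1\infconv f_2\in\Gamma_0(\HH)$ so that Theorem~\ref{t:jjm0} can be applied in reverse. Your explicit flagging of why the qualification condition is needed matches the role it plays in the paper's citation.
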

\begin{proof}
It follows from \cite[Prop.~15.7(i)]{Livre1} that
$f_1\infconv f_2\in\Gamma_0(\HH)$. In addition, we derive from 
Theorem~\ref{t:jjm0}, \eqref{e:eduardo3}, and 
\cite[Prop.~13.24(i)]{Livre1} that 
$T_1+T_2
=\nabla\big(f_1^*\infconv q\big)+\nabla\big(f_2^*\infconv q\big)
=\nabla\big(f_1^*\infconv q+f_2^*\infconv q\big)
=\nabla\big((f_1^*+f_2^*)\infconv q\big)
=\nabla\big((f_1\infconv f_2)^*\infconv q\big)
=\prox_{f_1\infconv f_2}$,
as claimed.
\end{proof}

\begin{remark}
\label{r:18}
Let $C_1$ and $C_2$ be nonempty closed convex subsets of $\HH$,
and set $f_1=\iota_{C_1}$ and $f_2=\iota_{C_2}$. Then the 
conclusion of Proposition~\ref{p:eduardo3} is that 
$\proj_{C_1}+\proj_{C_2}=\proj_{C_1+C_2}$. This property is
discussed in \cite{Zara74} (for cones), in 
\cite[Prop.~29.6]{Livre1}, and in the recently posted 
paper \cite{Buim18}.
\end{remark}

\subsection{Self-dual classes of firmly nonexpansive operators}

Let us call a subclass $\TL(\HH)$ of $\JL(\HH)$ 
\emph{self-dual} if
$(\forall T\in\TL(\HH))$ $\Id-T\in\TL(\HH)$. This property plays
an important role in our paper. 

It is clear from \eqref{e:firm} that $\JL(\HH)$ is self-dual. This
can also be recovered from Theorem~\ref{t:minty2} and \eqref{e:ja}.
As seen in Proposition~\ref{p:jjm12}\ref{p:jjm12i}, $\PL(\HH)$ 
is also self-dual. Now let $T\in\KL(\HH)$. Then there exists a 
nonempty closed convex cone $K\subset\HH$ such that 
$T=\proj_K$ and Moreau's conical decomposition expresses the
projector onto the polar cone $K^{\ominus}$ as 
$\proj_{K^\ominus}=\Id-\proj_K$ \cite{Mor62a}. This shows that
$\KL(\HH)$ is self-dual. 
Likewise, it follows from the standard Beppo Levi
orthogonal decomposition of $\HH$ \cite{Levi06} that the class
$\VL(\HH)$ of projectors onto closed vector subspaces of $\HH$ is
self-dual. We thus obtain the nested self-dual classes
\begin{equation}
\label{e:self-dual}
\VL(\HH)\subset\KL(\HH)\subset\PL(\HH)\subset\JL(\HH).
\end{equation}
Self-duality properties were investigated in \cite{Baus12}, where 
other classes were identified and studied in depth. 
In particular, let $A\colon\HH\to 2^{\HH}$ be maximally
monotone. Then $A$ is paramonotone if and only if $A^{-1}$ is
\cite[Prop.~22.2(i)]{Livre1}. As a result, it follows from 
\eqref{e:ja} that the class 
$\JL_{\text{para}}(\HH)$ of resolvents of paramonotone maximally 
monotone operators from $\HH$ to $2^{\HH}$ is self-dual
\cite{Baus12} and since subdifferentials are paramonotone,
we have $\PL(\HH)\subset\JL_{\text{para}}(\HH)\subset\JL(\HH)$.
Likewise, since $A$ is $3^*$~monotone if and only if $A^{-1}$ is
\cite[Prop.~25.19(i)]{Livre1}, and since subdifferentials 
are $3^*$~monotone, the class $\JL_{3^*}(\HH)$ of resolvents of
$3^*$~monotone maximally monotone operators from $\HH$ to $2^{\HH}$
is self-dual and satisfies
$\PL(\HH)\subset\JL_{3^*}(\HH)\subset\JL(\HH)$.

Although our primary objective in Section~\ref{sec:33} was to 
investigate transformations on the class $\PL(\HH)$, similar 
questions could be asked about other self-dual classes. In this 
spirit, Zarantonello \cite{Zara74} has studied some transformations
in $\KL(\HH)$. Let $T_1$ and $T_2$ be in $\KL(\HH)$, say
$T_1=\proj_{K_1}$ and $T_2=\proj_{K_2}$.
In connection with Proposition~\ref{p:kj1}, he has shown that 
$\{T_1\circ T_2,T_2\circ T_1\}\subset\KL(\HH)$ if and only if 
$T_2\circ T_1=T_1\circ T_2$, in which case 
$T_1\circ T_2=\proj_{K_1\cap K_2}$ \cite{Zara73}. On the other
hand, in this context, the conclusion of
Proposition~\ref{p:eduardo3}, which states that
$T_1+T_2=\proj_{K_1+K_2}\in\KL(\HH)$, is discussed
in \cite{Zara74}.

The proximity-preserving transformations studied in 
Section~\ref{sec:33} have natural resolvent-preserving
counterparts.  For instance, mimicking the pattern of
Remark~\ref{r:jjm1}\ref{r:jjm1v} and using
\cite[Thm.~25.3]{Livre1}, one shows that, if 
$T=J_{A}\in\JL(\HH)$, 
$B\in\ML(\HH)$, $M\in\BL(\GG,\HH)\smallsetminus\{0\}$, and 
$C\in\ML(\GG)$, then $T\infconv(B\infconv(M\pushfwd C))
=J_{A+B^{-1}+M\circ C^{-1}\circ M^*}\in\JL(\HH)$
provided that the cones generated by 
$\dom C^{-1}-M^*(\dom A\cap\dom B^{-1})$ and by
$\dom A-\dom B^{-1}$ are closed vector subspaces.

\section{Monotone operators in convex optimization}
\label{sec:4}

In this section we present several examples of maximally monotone
operators which are not subdifferentials and which play 
fundamental and indispensable roles in the analysis and 
the numerical solution of convex optimization problems.  We preface
these examples with a brief overview of classical splitting methods
\cite{Livre1} which depend less critically on monotone operator
theory.

\subsection{The interplay between splitting methods for convex
optimization and monotone inclusion problems}
\label{sec:split}

The proximal point algorithm \eqref{e:martinet1} was first
developed for convex optimization. It was extended 
in \cite{Roc76a} to solve the inclusion problem 
\eqref{e:zero} for an operator $A\in\ML(\HH)$ such that 
$\zer A\neq\emp$ via the iteration
\begin{equation}
\label{e:ppa}
x_0\in\HH\quad\text{and}\quad(\forall n\in\NN)\quad
x_{n+1}=J_{\gamma_n A}x_n, \quad\text{where}\quad
\gamma_n\in\RPP.
\end{equation}
However, the algorithmic theory for the case of monotone
inclusions does not subsume that for the case of convex
optimization. Thus, as shown in \cite{Brez78}, the weak
convergence of $(x_n)_{n\in\NN}$ to a point in $A$ holds when
$\sum_{n\in\NN}\gamma_n^2=\pinf$, and this condition can be
weakened to $\sum_{n\in\NN}\gamma_n=\pinf$ if $A\in\SL(\HH)$
(see also \cite{Gule91} for finer properties in the 
subdifferential case). This is explained by the fact that, 
given $z\in\zer A$, \eqref{e:ppa} and Theorem~\ref{t:minty2}
yield
\begin{equation}
(\forall n\in\NN)\quad\|x_{n+1}-z\|^2\leq\|x_n-z\|^2-
\|J_{\gamma_n A}x_n-x_n\|^2
\end{equation}
for a general $A\in\ML(\HH)$ while, when $A=\partial f$ for some
$f\in\Gamma_0(\HH)$, it can be sharpened to 
\begin{equation}
(\forall n\in\NN)\quad\|x_{n+1}-z\|^2\leq\|x_n-z\|^2-
\|J_{\gamma_n A}x_n-x_n\|^2-2\gamma_n
\big(f(x_{n+1})-\text{inf}\: f(\HH)\big).
\end{equation}
Going back to the discussion of Section~\ref{sec:3}, this sheds a
different light on the differences between $\SL(\HH)$ and
$\ML(\HH)$. Naturally, the applicability of \eqref{e:ppa}
depends on the ease of implementation of the resolvents
$(J_{A_n})_{n\in\NN}$. A more structured inclusion problem is
the following.

\begin{problem}
\label{prob:1}
Let $A\in\ML(\HH)$ and $B\in\ML(\HH)$ be such that 
$0\in\ran(A+B)$. Find a zero of $A+B$. 
\end{problem}

Generally speaking, when replacing monotone operators by
subdifferentials in certain inclusion problems, one
recovers a convex minimization problem provided some constraint
qualification holds \cite{Livre1}. In this regard, we shall also
consider the following convex optimization problem.

\begin{problem}
\label{prob:8}
Let $f$ and $g$ be functions in $\Gamma_0(\HH)$ such that
$0\in\ran(\partial f+\partial g)$. Find a
minimizer of $f+g$ over $\HH$.
\end{problem}

There are three classical methods for solving Problem~\ref{prob:1}, 
which we present here in simple forms (see
\cite{Siop11,Cham15,Jmaa15} and the references therein for
refinements). All three methods produce a sequence
$(x_n)_{n\in\NN}$ which converges weakly to a zero of $A+B$
\cite{Livre1,Tsen91,Tsen00}, but they involve different
assumptions on $B$. Let us stress that the importance of these
three splitting methods 
is not only historical: many seemingly different splitting 
methods are just, explicitly or implicitly, 
reformulations of these basic schemes in alternate settings
(e.g., product spaces, dual spaces, primal-dual spaces, 
renormed spaces, or a combination thereof); see 
\cite{Sico10,Atto18,Siop11,Joca09,Icip14,Svva10,Svva12,Opti14,%
Cond13,Ecks92,Gaba83,Leno17,Roc76b,Tsen00,Bang13} and the 
references therein for specific examples.

\begin{itemize}
\item
{\bfseries Forward-backward splitting.}
In Problem~\ref{prob:1}, suppose that $B\colon\HH\to\HH$ and that
$\beta^{-1} B\in\FL(\HH)$ for some $\beta\in\RPP$.
Let $x_0\in\HH$ and $\varepsilon\in\left]0,2/(\beta+1)\right[$,
and iterate
\begin{equation}
\label{e:fb1}
\begin{array}{l}
\text{for}\;n=0,1,\ldots\\
\left\lfloor
\begin{array}{l}
\varepsilon\leq\gamma_n\leq(2-\varepsilon)/\beta\\
y_n=x_n-\gamma_n Bx_n\\
x_{n+1}=J_{\gamma_n A}y_n.
\end{array}
\right.\\[2mm]
\end{array}
\end{equation}
In view of Theorem~\ref{Trio9-}, the assumptions on $B$ translate
into the fact that, in Problem~\ref{prob:8}, $f_2\colon\HH\to\RR$
is differentiable and that $B=\nabla f_2$ is
$\beta$-Lipschitzian, while $A=\partial f_1$.
\item
{\bfseries Tseng's forward-backward-forward splitting.}
In Problem~\ref{prob:1}, suppose that $B\colon\HH\to\HH$ 
is $\beta$-Lipschitzian for some $\beta\in\RPP$.
Let $x_0\in\HH$ and $\varepsilon\in\left]0,1/(\beta+1)\right[$,
and iterate
\begin{equation}
\label{e:fbf}
\begin{array}{l}
\text{for}\;n=0,1,\ldots\\
\left\lfloor
\begin{array}{l}
\varepsilon\leq\gamma_n\leq(1-\varepsilon)/\beta\\
y_n={x}_n-\gamma_n Bx_n\\
p_n=J_{\gamma_n A}{y}_n\\
q_n=p_n-\gamma_nBp_n\\
x_{n+1}=x_n-y_n+q_n.
\end{array}
\right.\\[2mm]
\end{array}
\end{equation}
\item
{\bfseries Douglas-Rachford splitting.}
Let $x_0\in\HH$ and $\gamma\in\RPP$, and iterate
\begin{equation}
\label{e:dr}
\begin{array}{l}
\text{for}\;n=0,1,\ldots\\
\left\lfloor
\begin{array}{l}
x_n=J_{\gamma B}y_n\\
z_n=J_{\gamma A}(2x_n-y_n)\\
y_{n+1}=y_n+z_n-x_n.
\end{array}
\right.\\[2mm]
\end{array}
\end{equation}
\end{itemize}

Historically, the forward-backward method grew out of the
projected gradient method in convex optimization \cite{Levi66},
and the first version for Problem~\ref{prob:1} was proposed in
\cite{Merc79}. Another example of a monotone operator splitting 
method that evolved from convex optimization is Dykstra's method
\cite{Paci08}, which was first devised for indicator functions
in \cite{Boyl86}. By contrast, the forward-backward-forward
\cite{Tsen00} and
Douglas-Rachford \cite{Lion79} methods were developed directly for
Problem~\ref{prob:1}, and then specialized to
Problem~\ref{prob:8}. In principle, however, even though 
monotone inclusions provide a more synthetic and natural 
framework, it is possible (at least {\em a posteriori}) 
to derive their convergence in the scenario of
Problem~\ref{prob:8} from optimization concepts only, without
invoking monotone operator theory. Nonetheless, 
non-subdifferential maximally monotone operators may still be 
at play. For instance, note that in the Douglas-Rachford 
algorithm \eqref{e:dr}, we have
\begin{equation}
\label{e:dr2}
(\forall n\in\NN)\quad y_{n+1}=Ty_n,\quad\text{where}\quad
T=\frac{(2J_{\gamma A}-\Id)\circ(2J_{\gamma B}-\Id)+\Id}{2}.
\end{equation}
Upon invoking \eqref{e:dr7} and Theorem~\ref{t:minty2}, we see 
that $T\in\JL(\HH)$ and that there exists 
$C\in\ML(\HH)$ such that $T=J_C$, namely 
$C=T^{-1}-\Id$. Hence
\begin{equation}
\label{e:dr4}
(\forall n\in\NN)\quad y_{n+1}=J_Cy_n,\quad\text{where}\quad
C=T^{-1}-\Id. 
\end{equation}
In other words, $(y_n)_{n\in\NN}$ is produced by an instance of
the proximal point algorithm \eqref{e:ppa} and, in this sense,
the dynamics of the Douglas-Rachford algorithm are implicitly 
governed by a maximally monotone operator (as seen in
Example~\ref{ex:20}, this operator is typically not in $\SL(\HH)$,
even if $A$ and $B$ are). This
observation, which was made in \cite{Ecks92}, has actually a much
more general scope. Indeed, as shown in \cite{Opti04}, several
operator splitting algorithms are driven by successive
approximations of an averaged operator $T\colon\HH\to\HH$, i.e., an
operator 
of the form $T=(1-\alpha)\Id+\alpha R$, where $R\colon\HH\to\HH$ 
is nonexpansive and $\alpha\in\zeroun$ (further examples are 
found in more recent papers such as \cite{Davi17} and 
\cite{Ragu13}). We derive from 
Theorem~\ref{t:minty2} that there exists 
$C\in\ML(\HH)$ (and $C\notin\SL(\HH)$ in general) such 
that $R=2J_C-\Id$, namely $C=((R+\Id)/2)^{-1}-\Id$. Therefore,
$T=\Id+2\alpha(J_C-\Id)$. In turn, a sequence 
$(x_n)_{n\in\NN}$ produced by the successive 
approximations of $T$ is generated implicitly by the
relaxed resolvent iteration
\begin{equation}
\label{e:G1973}
(\forall n\in\NN)\quad
x_{n+1}=x_n+\lambda(J_Cx_n-x_n),\quad\text{where}\quad
\lambda=2\alpha\quad\text{and}\quad
C=\bigg(\Id+\frac{1}{2\alpha}(T-\Id)\bigg)^{-1}-\Id.
\end{equation}
For example, let us consider the forward-backward algorithm
\eqref{e:fb1} with a fixed proximal parameter 
$\gamma\in\left]0,2/\beta\right[$. Then
\begin{equation}
\label{e:fb3}
(\forall n\in\NN)\quad
x_{n+1}=Tx_n\quad\text{where}\quad 
T=J_{\gamma A}\circ(\Id-\gamma B). 
\end{equation}
Furthermore, $T$ is averaged with constant
$\alpha=2/(4-\beta\gamma)$ \cite{Jmaa15}. Altogether, the
forward-backward iteration \eqref{e:fb3} is an instance of the
relaxed proximal point algorithm
\begin{multline}
\label{e:fb4}
(\forall n\in\NN)\quad
x_{n+1}=x_n+\lambda(J_Cx_n-x_n),\quad\text{where}\quad
\lambda=\frac{4}{4-\beta\gamma}\\
\text{and}\quad
C=\bigg(\frac{\beta\gamma}{4}\Id+\frac{4-\beta\gamma}{4}
\big(J_{\gamma A}\circ(\Id-\gamma B)\big)\bigg)^{-1}-\Id.
\end{multline}

\subsection{Rockafellar's saddle function operator}

The following result is due to Rockafellar \cite{Rock70,Roc71d}
(he actually used a somewhat more general notion of 
closedness, made precise in these papers, for the function 
$\mathcal{L}$).

\begin{theorem}[Rockafellar]
\label{t:rocky70}
Let $\HH_1$ and $\HH_2$ be real Hilbert spaces, let 
$\mathcal{L}\colon\HH_1\oplus\HH_2\to\RXX$ be such that, 
for every $x_1\in\HH_1$
and every $x_2\in\HH_2$, 
$-\mathcal{L}(x_1,\cdot)\in\Gamma_0(\HH_2)$ and
$\mathcal{L}(\cdot,x_2)\in\Gamma_0(\HH_1)$. Set
\begin{equation}
\label{e:rocky70}
(\forall x_1\in\HH_1)(\forall x_2\in\HH_2)\quad 
\boldsymbol{A}(x_1,x_2)=
\partial\mathcal{L}(\cdot,x_2)(x_1)\times
\partial(-\mathcal{L}(x_1,\cdot))(x_2).
\end{equation}
Then $\boldsymbol{A}\in\ML(\HH_1\oplus\HH_2)$ and
\begin{equation}
\zer\boldsymbol{A}=\menge{(x_1,x_2)\in\HH_1\oplus\HH_2}
{\mathcal{L}(x_1,x_2)=\inf \mathcal{L}(\HH_1,x_2)=\sup
\mathcal{L}(x_1,\HH_2)}
\end{equation}
is the set of saddle points of $\mathcal{L}$.
\end{theorem}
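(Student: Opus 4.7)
My plan is to verify monotonicity directly from subgradient inequalities, apply Minty's criterion (Theorem~\ref{t:minty1}) to upgrade to maximality, and finally identify $\zer\boldsymbol{A}$ with the saddle points via Fermat's rule \eqref{e:fermat}.

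For monotonicity, take $((x_1,x_2),(u_1,u_2))$ and $((y_1,y_2),(v_1,v_2))$ in $\gra\boldsymbol{A}$, and write the four subgradient inequalities furnished by \eqref{e:subdiff}: convexity of $\mathcal{L}(\cdot,x_2)$ at $x_1$ with slope $u_1$ yields $\mathcal{L}(y_1,x_2)\geq \mathcal{L}(x_1,x_2)+\scal{y_1-x_1}{u_1}$; convexity of $\mathcal{L}(\cdot,y_2)$ at $y_1$ with slope $v_1$ yields $\mathcal{L}(x_1,y_2)\geq \mathcal{L}(y_1,y_2)+\scal{x_1-y_1}{v_1}$; and the analogous two inequalities for $-\mathcal{L}(x_1,\cdot)$ at $x_2$ and $-\mathcal{L}(y_1,\cdot)$ at $y_2$ give $-\mathcal{L}(x_1,y_2)\geq -\mathcal{L}(x_1,x_2)+\scal{y_2-x_2}{u_2}$ and $-\mathcal{L}(y_1,x_2)\geq -\mathcal{L}(y_1,y_2)+\scal{x_2-y_2}{v_2}$. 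Summing all four, the $\mathcal{L}$ values cancel exactly and leave $0\geq\scal{y_1-x_1}{u_1-v_1}+\scal{y_2-x_2}{u_2-v_2}$, which is the monotonicity of $\boldsymbol{A}$ on $\HH_1\oplus\HH_2$.

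For maximality I verify the Minty range condition $\ran(\Id+\boldsymbol{A})=\HH_1\oplus\HH_2$. Fix $(z_1,z_2)\in\HH_1\oplus\HH_2$ and introduce the perturbed saddle function
\[
\Phi(x_1,x_2)=\mathcal{L}(x_1,x_2)
+\tfrac{1}{2}\|x_1-z_1\|^2-\tfrac{1}{2}\|x_2-z_2\|^2.
\]
For every $x_2$, $\Phi(\cdot,x_2)\in\Gamma_0(\HH_1)$ is $1$-strongly convex; for every $x_1$, $-\Phi(x_1,\cdot)\in\Gamma_0(\HH_2)$ is $1$-strongly convex. A classical saddle-point existence theorem for closed proper convex-concave functions with such uniform strong convexity/concavity in Hilbert spaces furnishes a saddle point $(x_1,x_2)$ of $\Phi$. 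Since the quadratic perturbations are finite and continuous, the subdifferential sum rule applies in each variable: the statements that $x_1$ minimizes $\Phi(\cdot,x_2)$ and $x_2$ minimizes $-\Phi(x_1,\cdot)$ translate, via Fermat's rule \eqref{e:fermat}, into $z_1-x_1\in\partial\mathcal{L}(\cdot,x_2)(x_1)$ and $z_2-x_2\in\partial(-\mathcal{L}(x_1,\cdot))(x_2)$. Hence $(z_1,z_2)\in(\Id+\boldsymbol{A})(x_1,x_2)$, and Theorem~\ref{t:minty1} yields $\boldsymbol{A}\in\ML(\HH_1\oplus\HH_2)$.

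Finally, $(x_1,x_2)\in\zer\boldsymbol{A}$ means, by the definition of $\boldsymbol{A}$, that $0\in\partial\mathcal{L}(\cdot,x_2)(x_1)$ and $0\in\partial(-\mathcal{L}(x_1,\cdot))(x_2)$, which Fermat's rule \eqref{e:fermat} immediately converts into the two identities $\mathcal{L}(x_1,x_2)=\inf\mathcal{L}(\HH_1,x_2)$ and $\mathcal{L}(x_1,x_2)=\sup\mathcal{L}(x_1,\HH_2)$ that define a saddle point. The main obstacle in the whole plan is the saddle-point existence step for $\Phi$: one must justify, under the somewhat weak closedness hypothesis allowed on $\mathcal{L}$, that the strongly convex/strongly concave perturbation $\Phi$ admits a saddle point. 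This is the technical core of the argument and precisely where Rockafellar's minimax existence machinery for closed convex-concave functions enters.
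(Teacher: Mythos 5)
The paper itself offers no proof of Theorem~\ref{t:rocky70} --- it is quoted from Rockafellar's papers --- so there is no internal argument to compare against; I will therefore assess your proposal on its own terms and against Rockafellar's original proof. Your monotonicity computation is complete and correct: note that the hypotheses as stated ($\mathcal{L}(\cdot,x_2)\in\Gamma_0(\HH_1)$ and $-\mathcal{L}(x_1,\cdot)\in\Gamma_0(\HH_2)$ for \emph{all} $x_1,x_2$) actually force $\mathcal{L}$ to be real-valued everywhere, so the four function values cancel with no $\infty-\infty$ ambiguity; even in the extended-real-valued setting the four subgradient inequalities themselves force finiteness of the cross terms $\mathcal{L}(y_1,x_2)$ and $\mathcal{L}(x_1,y_2)$, so the cancellation survives. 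The identification of $\zer\boldsymbol{A}$ with the saddle set via Fermat's rule is immediate, as you say, and the passage from a saddle point of $\Phi$ to $(z_1,z_2)\in(\Id+\boldsymbol{A})(x_1,x_2)$ is legitimate because the quadratic perturbations are finite and continuous, so the subdifferential sum rule applies unconditionally.

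The one load-bearing step you defer to ``minimax machinery'' --- the existence of a saddle point of $\Phi$ --- is indeed where all the work is, and it is not free. Under the stated hypotheses it can be filled: the marginals $x_1\mapsto\sup_{x_2}\Phi(x_1,x_2)$ and $x_2\mapsto\inf_{x_1}\Phi(x_1,x_2)$ are finite, semicontinuous in the right directions, and $1$-strongly convex (resp.\ concave), so their level sets are bounded, closed, and convex, hence weakly compact, and a Sion/Ky Fan-type minimax theorem in the weak topology produces the saddle point. But in the more general ``closed saddle function'' setting alluded to in the paper's parenthetical remark, separate semicontinuity alone does not guarantee the minimax equality --- Rockafellar's closedness condition is exactly what rescues it --- so your argument as written proves the theorem only in the everywhere-finite regime unless that existence theorem is cited at full strength. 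Rockafellar's own proof avoids minimax existence entirely: he forms the jointly convex parent $F\colon(x_1,u_2)\mapsto\sup_{x_2}\big(\mathcal{L}(x_1,x_2)+\scal{x_2}{u_2}\big)$, observes that $\gra\boldsymbol{A}$ is carried onto $\gra\partial F$ by a monotonicity-preserving permutation of coordinates, and imports maximality from Theorem~\ref{t:moreau1}. That route buys maximality from a theorem already in hand at the price of partial-conjugate bookkeeping; yours is more direct and self-contained in spirit, but shifts the entire difficulty onto a minimax existence theorem that must be proved or quoted at the same level of generality as the statement itself.
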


A geometrical interpretation of \eqref{e:rocky70} is that
$(u_1,u_2)\in \boldsymbol{A}(x_1,x_2)$ if and only if $(x_1,x_2)$
is a saddle point of the convex-concave function
$(x'_1,x'_2)\mapsto\mathcal{L}(x'_1,x'_2)-
\scal{x'_1}{u_1}+\scal{x'_2}{u_2}$. The maximally monotone
operator $\boldsymbol{A}$ of \eqref{e:rocky70} is deeply rooted
in convex optimization due to the foundational role it plays in
Lagrangian theory and duality schemes
\cite{Livre1,Rock70,Rock74,Roc76b}. Yet, as the following example
shows, it is not a subdifferential. 

\begin{example}
\label{ex:1}
In Theorem~\ref{t:rocky70}, set $\HH_1=\HH_2=\RR$ and 
$\mathcal{L}\colon(x_1,x_2)\mapsto x_1^2-x_1x_2$. 
Then \eqref{e:rocky70} yields
$\boldsymbol{A}\colon(x_1,x_2)\mapsto(2x_1-x_2,x_1)$. Thus,
$\boldsymbol{A}\in\BL(\RR^2)$ is positive and not 
self-adjoint. It follows from Lemma~\ref{l:1} that 
$\boldsymbol{A}\in\ML(\HH)\smallsetminus\SL(\HH)$.
\end{example}

The idea of using the proximal point algorithm \eqref{e:ppa} with
the operator $\boldsymbol{A}$ of \eqref{e:rocky70} to find a
saddle point of $\mathcal{L}$ was proposed by Rockafellar in
\cite{Roc76a}. In \cite{Roc76b}, he applied it to the concrete
problem of minimizing a convex function subject to convex
inequality constraints, using the ordinary Lagrangian as a saddle
function. The resulting algorithm is known as the proximal method
of multipliers.

\subsection{Spingarn's partial inverse operator}

Let $A\in\ML(\HH)$, let $V$ be a closed vector subspace of $\HH$, 
and let the partial inverse of $A$ with respect to $V$ be the 
operator $A_V\in\ML(\HH)$ defined in
\eqref{e:fiwejfweo}. As discussed in \cite{Spin83}, problems of 
the form
\begin{equation}
\label{e:cauq1}
\text{find}\;\;x\in V\;\;\text{and}\;\;
u\in V^\bot\;\;\text{such that}\;\;u\in Ax
\end{equation}
can be solved by applying the proximal point algorithm
\eqref{e:ppa} to  $A_V$; this method is known
as the method of partial inverses, and it has strong connections
with the Douglas-Rachford algorithm \cite{Ecks92,Lawr87,Mahe95}. 
For instance, if $A=\partial f$ for some $f\in\Gamma_0(\HH)$ such
that $f$ admits a minimizer over $V$ and 
$0\in\sri(V-\dom f)$, \eqref{e:cauq1} reduces to finding
a solution of the Fenchel dual pair
\begin{equation}
\label{e:cauq3}
\minimize{x\in V}{f(x)}\quad\text{and}\quad
\minimize{u\in V^\bot}{f^*(u)}.
\end{equation}
In this case, given $x_0\in V$ and $u_0\in V^\bot$, 
the method of partial inverses iterates
\begin{equation}
\label{e:cauq2}
\begin{array}{l}
\text{for}\;n=0,1,\ldots\\
\left\lfloor
\begin{array}{l}
y_n=\prox_f(x_n+u_n)\\
v_n=x_n+u_n-y_n\\
(x_{n+1},u_{n+1})=(\proj_Vy_n,\proj_{V^\bot}v_n),
\end{array}
\right.\\[2mm]
\end{array}
\end{equation}
and the sequences $(x_n)_{n\in\NN}$ and $(u_n)_{n\in\NN}$ 
converge weakly to a solution to the primal and dual problems
in \eqref{e:cauq3} \cite[Prop.~28.2]{Livre1}. 
This algorithm has many applications in convex optimization, e.g., 
\cite{Idri89,Lema89,Leno17,Spin83,Spin85,Spin87}. 
It also constitutes the basic
building block of the progressive hedging algorithm in stochastic
programming \cite{Rock91}. Thus, despite its apparent simplicity, 
this partial inverse approach is quite powerful and it can 
tackle the following primal-dual problem.

\begin{problem}
\label{prob:4}
Let $I$ be a nonempty finite set, and let $(\HH_i)_{i\in I}$ and 
$\GG$ be real Hilbert spaces. Let $r\in\GG$, let 
$g\in\Gamma_0(\GG)$, and, for every $i\in I$,
let $z_i\in\HH_i$, let $f_i\in\Gamma_0(\HH_i)$, 
and let $L_{i}\in\BL(\HH_i,\GG)$. 
Solve the primal problem
\begin{equation}
\minimize{(\forall i\in I)\;x_i\in\HH_i}
{\sum_{i\in I}\big(f_i(x_i)-\scal{x_i}{z_i}\big)
+g\Bigg(\sum_{i\in I}L_ix_i-r\Bigg)},
\end{equation}
together with the dual problem
\begin{equation}
\minimize{v\in\GG}{\sum_{i\in I}f_i^*\big(z_i-L_i^*v\big)
+g^*(v)+\scal{v}{r}}.
\end{equation}
\end{problem}

It is shown in \cite{Optl14} that, when applied to a version of 
\eqref{e:cauq1} suitably reformulated in a product space,
\eqref{e:cauq2} yields a proximal splitting algorithm that solves
Problem~\ref{prob:4} and employs the operators 
$(\prox_{f_i})_{i\in I}$, $\prox_g$, 
$(L_i)_{i\in I}$, and $(L^*_i)_{i\in I}$ 
separately.

The backbone of all the above-mentioned applications of the
method of partial inverses to convex optimization is the partial
inverse of an operator in $\SL(\HH)$. As seen in
Example~\ref{ex:21}, this maximally monotone operator is not in
$\SL(\HH)$ in general.

\subsection{Primal-dual algorithm for mixed composite minimization}
We re-examine through the lens of the maximally monotone 
saddle function operator \eqref{e:rocky70} a mixed
composite minimization problem proposed and studied in
\cite{Svva12} with different tools. 

\begin{problem}
\label{prob:2}
Let $f\in\Gamma_0(\HH)$, let $h\colon\HH\to\RR$ be convex and
differentiable with a $\mu$-Lipschitzian gradient for some
$\mu\in\RPP$, let $\GG$ be a real Hilbert space, let 
$g\in\Gamma_0(\GG)$, and let $\ell\in\Gamma_0(\GG)$ 
be $1/\nu$-strongly convex for some $\nu\in\RPP$. Suppose that 
$0\neq L\in\BL(\HH,\GG)$ and that 
\begin{equation}
\label{e:cq1}
0\in\ran\big(\partial f+L^*\circ(\partial g\infconv
\partial\ell)\circ L+\nabla h\big).
\end{equation}
Consider the problem
\begin{equation}
\label{e:primalvar}
\minimize{x\in\HH}{f(x)+(g\infconv\ell)(Lx)+h(x)}, 
\end{equation}
and the dual problem
\begin{equation}
\label{e:dualvar}
\minimize{v\in\GG}{\big(f^*\infconv h^*\big)
(-L^*v)+g^*(v)+\ell^*(v)}.
\end{equation}
\end{problem}

From a numerical standpoint, solving \eqref{e:primalvar} is
challenging as it involves five objects (four functions, three of 
which are nonsmooth, and a linear operator), while traditional 
proximal splitting techniques are limited to two objects; see
\eqref{e:fb1}--\eqref{e:dr}.
In \cite{Svva12}, Problem~\ref{prob:2} was analyzed and solved as 
an instance of a more general primal-dual inclusion problem 
involving monotone operators, which was reformulated as that of 
finding a zero of the sum of two operators in
$\ML(\HH\oplus\GG)$. Let us stress that, even in the special case 
of Problem~\ref{prob:2}, this inclusion problem still involves
operators which are not subdifferentials. To see this, we now
propose an alternative derivation of the results of
\cite[Sect.~4]{Svva12} using the saddle function formalism of
Theorem~\ref{t:rocky70}. Following the same pattern 
as in \cite[Examp.~11]{Rock74} (with the conventions of 
\cite[Prop.~19.20]{Livre1}), we define the Lagrangian of 
Problem~\ref{prob:2} as
\begin{align}
\label{e:Lagrangian11}
\mathcal{L}\colon\HH\oplus\GG&\to\RXX\nonumber\\
(x,v)
&\mapsto 
\begin{cases}
\minf, &\text{if}\;\;x\in\dom f\;\text{and}\;
v\notin\dom g^*\cap\dom\ell^*;\\
f(x)+h(x)+\scal{Lx}{v}-g^*(v)-\ell^*(v), 
&\text{if}\;\;x\in\dom f\;\text{and}\;v\in\dom g^*\cap\dom\ell^*;\\
\pinf, &\text{if}\;\;x\notin\dom f,
\end{cases}
\end{align}
and observe that it satisfies the assumptions of
Theorem~\ref{t:rocky70}. In turn, using standard subdifferential
calculus \cite{Livre1}, we deduce that the associated maximally
monotone operator $\boldsymbol{A}$ of \eqref{e:rocky70} is
\begin{align}
\label{e:svva12a}
\boldsymbol{A}\colon\HH\oplus\GG\to 2^{\HH\oplus\GG}\colon(x,v)
&\mapsto\partial\mathcal{L}(\cdot,v)(x)\times
\partial(-\mathcal{L}(x,\cdot))(v)\nonumber\\
&=\big(\partial f(x)+\nabla h(x)+L^*v,
\partial g^*(v)+\nabla\ell^*(v)-Lx\big).
\end{align}
It is noteworthy that this operator admits a Borwein-Wiersma 
decomposition \eqref{e:B-W}, namely
\begin{equation}
\label{e:B-W2}
\boldsymbol{A}=\partial\boldsymbol{\varphi}+\boldsymbol{S},
\quad\text{where}\quad
\begin{cases}
\boldsymbol{\varphi}\colon\HH\oplus\GG\to\RX\colon(x,v)\mapsto 
f(x)+h(x)+g^*(v)+\ell^*(v)\\
\boldsymbol{S}\colon\HH\oplus\GG\to\HH\oplus\GG
\colon(x,v)\mapsto(L^*v,-Lx).
\end{cases}
\end{equation}
Here $\boldsymbol{\varphi}\in\Gamma_0(\HH\oplus\GG)$ and 
$\boldsymbol{S}\in\BL(\HH\oplus\GG)$
is nonzero and skew, which shows that 
$\boldsymbol{A}\notin\SL(\HH\oplus\GG)$ 
by virtue of Lemma~\ref{l:1}.  
By Theorem~\ref{t:rocky70}, a zero $(x,v)$ of
$\boldsymbol{A}$ is a saddle point of $\mathcal{L}$, which
implies that $(x,v)$ solves Problem~\ref{prob:2}, i.e., $x$
solves \eqref{e:primalvar} and $v$ solves \eqref{e:dualvar}.
However, the decomposition \eqref{e:B-W2} does not lend itself
easily to splitting 
methods as they would require computing
$J_{\partial\boldsymbol{\varphi}}=\prox_{\boldsymbol{\varphi}}=
\prox_{f+h}\times\prox_{g^*+\ell^*}$, which does not admit a closed
form expression in general. A more judicious decomposition of
$\boldsymbol{A}$ is
\begin{equation}
\label{e:svva12f}
\boldsymbol{A}=\partial \boldsymbol{f}+\boldsymbol{B},
\quad\text{where}\quad
\begin{cases}
\boldsymbol{f}\colon\HH\oplus\GG\to\RX\colon(x,v)\mapsto 
f(x)+g^*(v)\\
\boldsymbol{B}\colon\HH\oplus\GG\to\HH\oplus\GG\colon(x,v)\mapsto
(\nabla h(x)+L^*v,\nabla\ell^*(v)-Lx).
\end{cases}
\end{equation}
Note that $\boldsymbol{f}\in\Gamma_0(\HH\oplus\GG)$ and that
computing $J_{\partial\boldsymbol{f}}=\prox_{\boldsymbol{f}}
=\prox_{f}\times\prox_{g^*}$ requires
only the ability to compute $\prox_{f}$ and 
$\prox_{g^*}=\Id-\prox_g$. Furthermore \cite{Svva12}, 
\begin{equation}
\label{e:lipB}
\boldsymbol{B}\in\ML(\HH\oplus\GG)\;
\text{is monotone and $\beta$-Lipschitzian with}\;
\beta=\text{max}\{\mu,\nu\}+\|L\|. 
\end{equation}
This structure makes the task of finding a zero of
$\boldsymbol{A}$ amenable to the
forward-backward-forward algorithm \eqref{e:fbf},
which requires one evaluation of $\prox_{\gamma \boldsymbol{f}}$
and two evaluations of $\boldsymbol{B}$ at each iteration. 
As seen in Section~\ref{sec:split}, given
$\varepsilon\in\left]0,1/(\beta+1)\right[$,
the forward-backward-forward algorithm constructs a sequence 
$(\boldsymbol{x}_n)_{n\in\NN}$ which converges weakly to 
a point in $\zer\boldsymbol{A}$ via the recursion
\begin{equation}
\label{e:FBF}
\begin{array}{l}
\text{for}\;n=0,1,\ldots\\
\left\lfloor
\begin{array}{l}
\varepsilon\leq\gamma_n\leq(1-\varepsilon)/\beta\\
\boldsymbol{y}_n=\boldsymbol{x}_n-\gamma_n 
\boldsymbol{B}\boldsymbol{x}_n\\
\boldsymbol{p}_n=\prox_{\gamma_n \boldsymbol{f}}\boldsymbol{y}_n\\
\boldsymbol{q}_n=\boldsymbol{p}_n-\gamma_n 
\boldsymbol{B}\boldsymbol{p}_n\\
\boldsymbol{x}_{n+1}=\boldsymbol{x}_n-\boldsymbol{y}_n
+\boldsymbol{q}_n.
\end{array}
\right.\\[2mm]
\end{array}
\end{equation}
Now set $(\forall n\in\NN)$
$\boldsymbol{x}_n=(x_n,v_n)$,
$\boldsymbol{y}_n=(y_{1,n},y_{2,n})$,
$\boldsymbol{p}_n=(p_{1,n},p_{2,n})$, and
$\boldsymbol{q}_n=(q_{1,n},q_{2,n})$.
Then, in view of \eqref{e:svva12f}, \eqref{e:FBF} assumes the
form of the primal-dual method of \cite[Sect.~4]{Svva12}, namely
\begin{equation}
\label{e:blackpagepart3}
\begin{array}{l}
\text{for}\;n=0,1,\ldots\\
\left\lfloor
\begin{array}{l}
\varepsilon\leq\gamma_n\leq(1-\varepsilon)/\beta\\
y_{1,n}=x_n-\gamma_n\big(\nabla h(x_n)+L^*v_{n}\big)\\
y_{2,n}=v_{n}+\gamma_n\big(Lx_n-\nabla\ell^{*}(v_{n})\big)\\
p_{1,n}=\prox_{\gamma_n f}y_{1,n}\\
p_{2,n}=\prox_{\gamma_n g^{*}}y_{2,n}\\
q_{1,n}=p_{1,n}-\gamma_n\big(\nabla h(p_{1,n})+
L^*p_{2,n}\big)\\
q_{2,n}=p_{2,n}+\gamma_n\big(Lp_{1,n}-\nabla\ell^{*}
(p_{2,n})\big)\\
x_{n+1}=x_n-y_{1,n}+q_{1,n}\\
v_{n+1}=v_{n}-y_{2,n}+q_{2,n}.
\end{array}
\right.\\
\end{array}
\end{equation}
We conclude that $(x_n)_{n\in\NN}$ converges weakly to a solution
$x$ to \eqref{e:primalvar} and that $(v_n)_{n\in\NN}$ converges 
weakly to a solution $v$ to \eqref{e:dualvar}. 

\begin{remark}
\label{r:1}
Let us make a few observations regarding Problem~\ref{prob:2}
and the iterative method \eqref{e:blackpagepart3}.
\begin{enumerate}
\item
\label{r:1i}
Algorithm~\eqref{e:blackpagepart3} achieves full splitting of the
functions and of the linear operators. In addition, all the smooth
functions are activated via explicit gradient steps, while the
nonsmooth ones are activated via their proximity operator.
\item
\label{r:1ii}
The special case when $\ell=\iota_{\{0\}}$ and $h=0$ leads to the
monotone+skew decomposition approach of \cite{Siop11}. As
discussed in \cite[Rem.~2.9]{Siop11}, in this case the use
Douglas-Rachford algorithm \eqref{e:dr} can also be contemplated 
since the resolvents $J_{\gamma\partial\boldsymbol{f}}$ and 
$J_{\gamma\boldsymbol{S}}$ of the operators in \eqref{e:B-W2} have
explicit forms.
\item
\label{r:1iii}
In \cite{Bang13}, Problem~\ref{prob:2} is also written as that of 
finding a zero of $\boldsymbol{A}$ in \eqref{e:svva12f}. However, 
it is then reformulated in a new Hilbert space obtained by
suitably renorming $\HH\times\GG$. This
formulation yields an equivalent inclusion problem for an
operator which can be 
decomposed as the sum of two maximally monotone operators amenable
to forward-backward splitting (see Problem~\ref{prob:1} and
\eqref{e:fb1}) and, \emph{in fine}, an algorithm which 
also achieves full splitting (see \cite{Icip14,Cond13,Xiao12}
for related work). A special case of this framework is the 
algorithm proposed in \cite{Cham11}.
\item
\label{r:1iv}
The construction of algorithm~\eqref{e:blackpagepart3} revolves
around the problem of finding a zero of the operator 
$\boldsymbol{A}\in\ML(\HH\oplus\GG)\smallsetminus
\SL(\HH\oplus\GG)$ of \eqref{e:svva12f}.
It is not clear how this, or any of the splitting algorithms
mentioned in \ref{r:1i}--\ref{r:1iii}, could have
been devised using only subdifferential tools. 
\end{enumerate}
\end{remark}

\subsection{Lagrangian formulations of composite problems}

We consider a special case of Problem~\ref{prob:2} which
corresponds to the standard Fenchel-Rockafellar duality framework.

\begin{problem}
\label{prob:3}
Let $f\in\Gamma_0(\HH)$, let $\GG$ be a real Hilbert space, and
let $g\in\Gamma_0(\GG)$. Suppose that $0\neq L\in\BL(\HH,\GG)$
and that $0\in\ran(\partial f+L^*\circ\partial g\circ L)$. 
The objective is to solve the primal problem
\begin{equation}
\label{e:primalafr}
\minimize{x\in\HH}{f(x)+g(Lx)}
\end{equation}
as well as the dual problem
\begin{equation}
\label{e:dualafr}
\minimize{v\in\GG}{f^*(-L^*v)+g^*(v)}.
\end{equation}
\end{problem}

We have already discussed in Remark~\ref{r:1}\ref{r:1ii}
monotone operator-based algorithms to solve 
\eqref{e:primalafr}--\eqref{e:dualafr}.
Alternatively, set $\HHH=\HH\oplus\GG$, 
$\boldsymbol{f}\colon\HHH\to\RX\colon(x,y)\mapsto f(x)+g(y)$, and 
$\boldsymbol{L}\colon\HHH\to\GG\colon(x,y)\mapsto Lx-y$. 
Then \eqref{e:primalafr} is equivalent to
minimizing $\boldsymbol{f}$ over $\ker\boldsymbol{L}$. 
The Lagrangian for this type of
problem is $\mathcal{L}\colon\HHH\oplus\GG\to\RX\colon 
(\boldsymbol{x},v)\mapsto
\boldsymbol{f}(\boldsymbol{x})+\scal{\boldsymbol{Lx}}{v}$
\cite[Examp.~4']{Rock74} (see also \cite[Prop.~19.21]{Livre1}) 
and the associated maximally monotone
operator $\boldsymbol{A}$ of \eqref{e:rocky70} is defined 
at $(\boldsymbol{x},v)\in\HHH\oplus\GG$ to be 
$\boldsymbol{A}(\boldsymbol{x},v)=
(\partial\boldsymbol{f}(\boldsymbol{x})+\boldsymbol{L}^*v,
-\boldsymbol{Lx})$. Thus, solving 
\eqref{e:primalafr}--\eqref{e:dualafr}
is equivalent to finding a zero of the operator 
$\boldsymbol{A}\in\ML(\HH\oplus\GG\oplus\GG)\smallsetminus
\SL(\HH\oplus\GG\oplus\GG)$ defined by
\begin{equation}
\label{e:r74}
(\forall x\in\HH)(\forall y\in\GG)(\forall v\in\GG)\quad
\boldsymbol{A}(x,y,v)=
\big(\partial f(x)+L^*v,\partial g(y)-v,-Lx+y\big).
\end{equation}
In \cite{Ecks94}, this problem is approached by splitting
$\boldsymbol{A}$ as 
\begin{equation}
\label{e:r75}
(\forall x\in\HH)(\forall y\in\GG)(\forall v\in\GG)\quad
\boldsymbol{A}(x,y,v)=
\big(\partial f(x)+L^*v,0,-Lx\big)+
\big(0,\partial g(y)-v,y\big).
\end{equation}
Given $\gamma\in\RPP$, $\mu_1\in\RR$, $\mu_2\in\RR$, $x_0\in\HH$,
$y_0\in\GG$, and $v_0\in\GG$, applying the Douglas-Rachford 
algorithm \eqref{e:dr} to this decomposition leads to the algorithm
\cite{Ecks94}
\begin{equation}
\label{e:bp5}
\begin{array}{l}
\text{for}\;n=0,1,\ldots\\
\left\lfloor
\begin{array}{l}
x_{n+1}\in\Argmind{x\in\HH}{\bigg(f(x)+\scal{Lx}{v_n}+
\dfrac{1}{2\gamma}\|Lx-y_n\|^2+\dfrac{\gamma\mu_1^2}{2}
\|x-x_n\|^2\bigg)}\\[3mm]
y_{n+1}=\underset{y\in\GG}{\text{argmin}}\;\bigg(g(y)+\scal{y}{v_n}
+\dfrac{1}{2\gamma}\|Lx_{n+1}-y\|^2+\dfrac{\gamma\mu_2^2}{2}
\|y-y_n\|^2\bigg)\\[3mm]
v_{n+1}=v_n+\gamma^{-1}\big(Lx_{n+1}-y_{n+1}\big).
\end{array}
\right.\\
\end{array}
\end{equation}
When $\mu_1=\mu_2=0$, this scheme corresponds to the alternating 
direction method of multipliers (ADMM) 
\cite{Banf11,Glow89,Glow75,Glow16} and, just like it, requires a 
potentially complex minimization involving $f$ and $L$ jointly
to construct $x_{n+1}$ (see \cite{Ecks92,Gaba83} for connections
between ADMM and the Douglas-Rachford algorithm). To circumvent
this issue and obtain a method that does split $f$, $g$, and $L$,
let us decompose $\boldsymbol{A}$ as
$\boldsymbol{A}=\boldsymbol{M}+\boldsymbol{S}$, where
\begin{equation}
\label{e:r76}
(\forall x\in\HH)(\forall y\in\GG)(\forall v\in\GG)\quad
\begin{cases}
\boldsymbol{M}(x,y,v)=
\big(\partial f(x),\partial g(y),0\big)\\
\boldsymbol{S}(x,y,v)=\big(L^*v,-v,-Lx+y\big).
\end{cases}
\end{equation}
Applying \eqref{e:fbf} to this subdifferential+skew decomposition
in $\HH\oplus\GG\oplus\GG$, we obtain the following algorithm, 
which employs $\prox_f$, $\prox_g$, $L$, and $L^*$.

\begin{proposition}
\label{p:i}
Consider the setting of Problem~\ref{prob:3} and let
$(x_0,y_0,v_0)\in\HH\oplus\GG\oplus\GG$. Iterate
\begin{equation}
\label{e:blackpagepart5}
\begin{array}{l}
\text{for}\;n=0,1,\ldots\\
\left\lfloor
\begin{array}{l}
\varepsilon\leq\gamma_n\leq(1-\varepsilon)/\sqrt{1+\|L\|^2}\\
r_n=\gamma_n(Lx_n-y_n)\\
p_{n}=\prox_{\gamma_n f}\big(x_n-\gamma_nL^*v_{n}\big)\\[1mm]
q_{n}=\prox_{\gamma_n g}\big(y_{n}+\gamma_nv_n\big)\\[1mm]
x_{n+1}=p_n-\gamma_nL^*r_n\\[1mm]
y_{n+1}=q_n+\gamma_nr_n\\[1mm]
v_{n+1}=v_n+\gamma_n\big(Lp_n-q_n\big).
\end{array}
\right.\\
\end{array}
\end{equation}
Then $(x_n)_{n\in\NN}$ and $(v_n)_{n\in\NN}$ converge weakly to 
solutions to \eqref{e:primalafr} and \eqref{e:dualafr},
respectively.
\end{proposition}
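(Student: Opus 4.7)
The plan is to cast Problem~\ref{prob:3} as an instance of Problem~\ref{prob:1} in the product space $\HHH=\HH\oplus\GG\oplus\GG$ via the decomposition $\boldsymbol{A}=\boldsymbol{M}+\boldsymbol{S}$ of~\eqref{e:r76}, and then to unfold Tseng's forward-backward-forward algorithm~\eqref{e:fbf} applied to this splitting and match it term by term with~\eqref{e:blackpagepart5}. First, $\boldsymbol{M}=\partial\boldsymbol{\varphi}$, where $\boldsymbol{\varphi}\colon(x,y,v)\mapsto f(x)+g(y)$ belongs to $\Gamma_0(\HHH)$, so $\boldsymbol{M}\in\ML(\HHH)$ by Theorem~\ref{t:moreau1}. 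A direct adjoint calculation yields $\boldsymbol{S}^*(x,y,v)=(-L^*v,v,Lx-y)=-\boldsymbol{S}(x,y,v)$, so $\boldsymbol{S}\in\BL(\HHH)$ is skew, hence monotone by~\eqref{e:zarantonello7} and, being continuous, maximally monotone.

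Next I would compute the Lipschitz constant of $\boldsymbol{S}$. Writing $\boldsymbol{S}$ in block form with respect to the splitting $(\HH\oplus\GG)\oplus\GG$, we have $\boldsymbol{S}\colon((x,y),v)\mapsto(-M^*v,M(x,y))$, where $M\colon(x,y)\mapsto y-Lx$ satisfies $MM^*=\Id_{\!\!\GG}+LL^*$; hence $\|\boldsymbol{S}\|^2=\|M\|^2=\|MM^*\|=1+\|L\|^2$, so $\boldsymbol{S}$ is $\beta$-Lipschitzian with $\beta=\sqrt{1+\|L\|^2}$. The existence of a zero of $\boldsymbol{A}$ follows from the qualification $0\in\ran(\partial f+L^*\circ\partial g\circ L)$: picking $x$ and $v\in\partial g(Lx)$ with $-L^*v\in\partial f(x)$ and setting $y=Lx$ produces $(x,y,v)\in\zer\boldsymbol{A}$, so all the assumptions of Problem~\ref{prob:1} are met.

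The heart of the argument is to check that~\eqref{e:fbf} applied to this splitting, with $\boldsymbol{x}_n=(x_n,y_n,v_n)$, coincides coordinate-wise with~\eqref{e:blackpagepart5}. The key simplification is that $\boldsymbol{M}$ is trivial on the third coordinate, so that $J_{\gamma_n\boldsymbol{M}}(a,b,c)=(\prox_{\gamma_n f}a,\prox_{\gamma_n g}b,c)$. Introducing the auxiliary quantity $r_n=\gamma_n(Lx_n-y_n)$, which is the negative of the third coordinate of $\gamma_n\boldsymbol{S}\boldsymbol{x}_n$, the forward step gives $\boldsymbol{y}_n=(x_n-\gamma_n L^*v_n,\,y_n+\gamma_n v_n,\,v_n+r_n)$, the resolvent step produces $\boldsymbol{p}_n=(p_n,q_n,v_n+r_n)$ with $p_n,q_n$ as in~\eqref{e:blackpagepart5}, and the correction $\boldsymbol{x}_{n+1}=\boldsymbol{x}_n-\boldsymbol{y}_n+\boldsymbol{q}_n$ collapses coordinate by coordinate to the three stated updates. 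Getting this bookkeeping right, and identifying the auxiliary variable $r_n$ as the vehicle for the skew action, is the main (if routine) obstacle.

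Finally, since $\varepsilon\leq\gamma_n\leq(1-\varepsilon)/\sqrt{1+\|L\|^2}=(1-\varepsilon)/\beta$, the weak convergence theorem for~\eqref{e:fbf} recalled in Section~\ref{sec:split} delivers $\boldsymbol{x}_n\weakly(x,y,v)\in\zer\boldsymbol{A}$. Decoding $0\in\boldsymbol{A}(x,y,v)$ from~\eqref{e:r76} yields $y=Lx$, $v\in\partial g(Lx)$, and $-L^*v\in\partial f(x)$; by Fermat's rule~\eqref{e:fermat} applied to~\eqref{e:primalafr} and its dual~\eqref{e:dualafr}, $x$ is a primal solution and $v$ is a dual solution. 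Continuity of the coordinate projections then gives the weak convergence of $(x_n)_{n\in\NN}$ and $(v_n)_{n\in\NN}$ asserted in the proposition.
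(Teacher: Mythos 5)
Your proposal is correct and follows essentially the same route as the paper: apply Tseng's forward-backward-forward iteration \eqref{e:fbf} to the subdifferential$+$skew decomposition \eqref{e:r76} in $\HH\oplus\GG\oplus\GG$, identify $J_{\gamma_n\boldsymbol{M}}=\prox_{\gamma_n f}\times\prox_{\gamma_n g}\times\Id$ and $\|\boldsymbol{S}\|=\sqrt{1+\|L\|^2}$, and unfold the iteration coordinatewise. The only cosmetic difference is that the paper delegates the convergence and the primal--dual decoding of the limit to \cite[Thm.~2.5(ii)]{Siop11}, whereas you carry out the existence of a zero, the bookkeeping, and the decoding of $\zer\boldsymbol{A}$ explicitly --- all of which you do correctly.
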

\begin{proof}
This is an application of \cite[Thm.~2.5(ii)]{Siop11} to the
maximally monotone operator $\boldsymbol{M}$ and the monotone and 
Lipschitzian operator $\boldsymbol{S}$ of \eqref{e:r76}. Note that
the Lipschitz constant of $\boldsymbol{S}$ is 
$\|\boldsymbol{S}\|=\sqrt{1+\|L\|^2}$ and that
$(\forall n\in\NN)$ $J_{\gamma_n\boldsymbol{M}}=
\prox_{\gamma_n f}\times\prox_{\gamma_n g}\times\Id$. Thus, using
elementary algebraic manipulations, 
\eqref{e:fbf} reduces to \eqref{e:blackpagepart5}. 
\end{proof}

Let us note that \eqref{e:blackpagepart5} bears a certain
resemblance with the algorithm 
\begin{equation}
\label{e:ct94}
\begin{array}{l}
\text{for}\;n=0,1,\ldots\\
\left\lfloor
\begin{array}{l}
\varepsilon\leq\gamma_n\leq(1-\varepsilon)\text{min}\{1,1/\|L\|\}/2\\
p_{n}=v_n+\gamma_n(Lx_n-y_n)\\[1mm]
x_{n+1}=\prox_{\gamma_n f}\big(x_n-\gamma_nL^*p_{n}\big)\\[1mm]
y_{n+1}=\prox_{\gamma_n g}\big(y_{n}+\gamma_np_n\big)\\[1mm]
v_{n+1}=v_n+\gamma_n\big(Lx_{n+1}-y_{n+1}\big),
\end{array}
\right.\\
\end{array}
\end{equation}
proposed in a finite-dimensional setting in \cite{Chen94}. 

\section{Closing remarks}

The constant interactions between convex optimization and monotone 
operator theory have greatly benefited both fields. On
the numerical side, spectacular advances have been made in the last
years in the area of splitting algorithms to solve complex
structured problems. While many methods have been obtained by
recasting classical algorithms in product spaces, often with the
help of duality arguments, recent proposals such as that of
\cite{MaPr18} rely on different paradigms and make asynchronous and
block-iterative implementations possible. Despite the relative
maturity of the field, there remain
plenty of exciting open problems, and we can mention only a
few here. For instance, on the theoretical side, duality for monotone
inclusions is based on rather rudimentary principles, whereby dual
solutions exist if and only if primal solution exist, and it does
not match the more subtle results from Fenchel-Rockafellar duality
in classical convex optimization. On the algorithmic front,
splitting based on Bregman distances is still in its infancy.  This
framework is motivated by the need to solve problems in Banach
spaces, where standard notions of resolvent and proximity
operators
are no longer appropriate, but also by numerical considerations in
basic Euclidean spaces since some proximity operators may be easier
to implement in Bregman form or some functions may have more
exploitable properties when examined through Bregman
distances \cite{Jero17,Joca16,Nguy17}. As a final word, let us
emphasize that a monumental achievement of Browder, 
Ka{\v{c}}urovski{\u\i},
Minty, Moreau, Rockafellar, and Zarantonello was to build, within
the unchartered field of nonlinear analysis, structured
and fertile areas that extended ideas from classical linear
functional analysis. It remains a huge challenge to delimit and
construct such areas in the vast world of nonconvex/nonmonotone 
problems, that would preserve enough structure to support a 
solid and meaningful theory and, at the same time, lend itself 
to the development of powerful algorithms that would produce 
more than just local solutions.


\begin{thebibliography}{999}  
\setlength{\itemsep}{-1pt}
\small

\bibitem{Optl14}
M. A. Alghamdi, A. Alotaibi, P. L. Combettes, and N. Shahzad,
A primal-dual method of partial inverses for composite inclusions,
{\em Optim. Lett.,}
vol. 8, pp. 2271--2284, 2014.

\bibitem{Aspl70}
E. Asplund,
A monotone convergence theorem for sequences of nonlinear mappings,
in {\em Nonlinear Functional Analysis, Part 1,} 
(F. E. Browder, ed.), pp. 1--9. 
AMS, Providence, RI, 1970.

\bibitem{Sico10}
H. Attouch, L. M. Brice\~no-Arias, and P. L. Combettes,
A parallel splitting method for coupled monotone inclusions,
{\em SIAM J. Control Optim.},
vol. 48, pp. 3246--3270, 2010. 

\bibitem{Atto18}
H. Attouch, J. Peypouquet, and P. Redont,
Backward-forward algorithms for structured monotone inclusions 
in Hilbert spaces,
{\em J. Math. Anal. Appl.,}
vol. 457, pp. 1095--1117, 2018.

\bibitem{Jfan12} 
J.-B. Baillon, P. L. Combettes, and R. Cominetti,
There is no variational characterization of the cycles in the 
method of periodic projections,
{\em J. Funct. Anal.,}
vol. 262, pp. 400--408, 2012.

\bibitem{Bail77} 
J.-B. Baillon and G. Haddad, 
Quelques propri\'et\'es des op\'erateurs angle-born\'es et 
$n$-cycliquement monotones,
{\em Israel J. Math.,}
vol. 26, pp. 137--150, 1977.

\bibitem{Bart07} 
S. Bartz, H. H. Bauschke, J. M. Borwein, S. Reich, and X. Wang,
Fitzpatrick functions, cyclic monotonicity and Rockafellar's
antiderivative, 
{\em Nonlinear Anal.,}
vol. 66, pp. 1198--1223, 2007.

\bibitem{Jero17}
H. H. Bauschke, J. Bolte, and M. Teboulle,
A descent lemma beyond Lipschitz gradient continuity: First-order
methods revisited and applications,
{\em Math. Oper. Res.,}
vol. 42, pp. 330--348, 2017.

\bibitem{Buim17}
H. H. Bauschke, M. N. Bui, and X. Wang,
Projecting onto the intersection of a cone and a sphere, 2017.\\
\url{https://arxiv.org/pdf/1708.00585}

\bibitem{Buim18}
H. H. Bauschke, M. N. Bui, and X. Wang,
On the sum of projectors onto convex sets, 2018.\\
\url{https://arxiv.org/pdf/1802.02287}

\bibitem{Paci08} 
H. H. Bauschke and P. L. Combettes, 
A Dykstra-like algorithm for two monotone operators,
{\em Pac. J. Optim.,}
vol. 4, pp. 383--391, 2008.

\bibitem{Joca10} 
H. H. Bauschke and P. L. Combettes, 
The Baillon-Haddad theorem revisited,
{\em J. Convex Anal.,} 
vol. 17, pp. 781--787, 2010.

\bibitem{Livre1} 
H. H. Bauschke and P. L. Combettes, 
{\em Convex Analysis and Monotone Operator Theory in Hilbert 
Spaces,} 2nd ed. 
Springer, New York, 2017.

\bibitem{Baus04} 
H. H. Bauschke, E. Matou{\v{s}}kov{\'a}, and S. Reich, 
Projection and proximal point methods: Convergence results and
counterexamples,
{\em Nonlinear Anal.,}
vol. 56, pp. 715--738, 2004. 

\bibitem{Baus12}
H. H. Bauschke, S. M. Moffat, and X. Wang,
Firmly nonexpansive mappings and maximally monotone operators:
Correspondence and duality,
{\em Set-Valued Var. Anal.,}
vol. 20, pp. 131--153, 2012.

\bibitem{Baus18} 
H. H. Bauschke, J. Schaad, and X. Wang,
On Douglas-Rachford operators that fail to be proximal mappings,
{\em Math. Program.,}
vol. B168, pp. 55--61, 2018.

\bibitem{Baus10} 
H. H. Bauschke, X. Wang, and L. Yao,
On Borwein-Wiersma decompositions of monotone linear relations,
{\em SIAM J. Optim.,}
vol. 20, pp. 2636--2652, 2010.

\bibitem{Beck09} 
A. Beck and M. Teboulle,
A fast iterative shrinkage-thresholding algorithm for linear 
inverse problems,
{\em SIAM J. Imaging Sci.,}
vol. 2, pp. 183--202, 2009.

\bibitem{Bell66} 
R. Bellman, R. E. Kalaba, and J. A. Lockett,
{\em Numerical Inversion of the Laplace Transform: Applications
to Biology, Economics Engineering, and Physics.}
Elsevier, New York, 1966.

\bibitem{Borw06} 
J. M. Borwein,
Maximal monotonicity via convex analysis,
{\em J. Convex Anal.,}
vol. 13, pp. 561--586, 2006.

\bibitem{Bor07a} 
J. M. Borwein,
Asplund decompositions of monotone operators,
{\em ESAIM: Proc.,}
vol. 17, pp. 19--25, 2007. 

\bibitem{Bor07b} 
J. M. Borwein and H. Wiersma,
Asplund decomposition of monotone operators,
{\em SIAM J. Optim.,}
vol. 18, pp. 946--960, 2007.

\bibitem{Botr14}
R. I. Bo{\c{t}} and C. Hendrich,
Convergence analysis for a primal-dual monotone + skew splitting
algorithm with applications to total variation minimization,
{\em J. Math. Imaging Vision,}
vol. 49, pp. 551--568, 2014.

\bibitem{Boyl86} 
J. P. Boyle and R. L. Dykstra, 
A method for finding projections onto the intersection of convex 
sets in Hilbert spaces,
{\em Lect. Notes Stat.}
vol. 37, pp. 28--47, 1986.

\bibitem{Brez73} 
H. Br\'ezis,
{\em Op\'erateurs Maximaux Monotones et Semi-Groupes de
Contractions dans les Espaces de Hilbert.}
North-Holland/Elsevier, New York, 1973.

\bibitem{Brez78} 
H. Br\'ezis and P. L. Lions,
Produits infinis de r\'esolvantes,
{\em Israel J. Math.,}
vol. 29, pp. 329--345, 1978.

\bibitem{Nmtm09}
L. M. Brice\~no-Arias and P. L. Combettes,
Convex variational formulation with smooth coupling for 
multicomponent signal decomposition and recovery, 
{\em Numer. Math. Theory Methods Appl.,}
vol. 2, pp. 485--508, 2009.

\bibitem{Siop11}
L. M. Brice\~{n}o-Arias and P. L. Combettes, 
A monotone+skew splitting model for composite monotone
inclusions in duality,
{\em SIAM J. Optim.,}
vol. 21, pp. 1230--1250, 2011.

\bibitem{Bro63a}
F. E. Browder,
The solvability of non-linear functional equations,
{\em Duke Math. J.,}
vol. 30, pp. 557--566, 1963.

\bibitem{Bro63b}
F. E. Browder,
Variational boundary value problems for quasi-linear elliptic
equations of arbitrary order,
{\em Proc. Natl. Acad. Sci. USA,} 
vol. 50, pp. 31--37, 1963.

\bibitem{Brow65}
F. E. Browder,
Multi-valued monotone nonlinear mappings and duality mappings in 
Banach spaces,
{\em Trans. Amer. Math. Soc.,}
vol. 118, pp. 338--351, 1965.

\bibitem{Brow67}
F. E. Browder,
Convergence theorems for sequences of nonlinear operators in 
Banach spaces,
{\em Math Z.,}
vol. 100, pp. 201--225, 1967.

\bibitem{Byrn14} 
C. L. Byrne,
{\em Iterative Optimization in Inverse Problems.}
CRC Press, Boca Raton, FL, 2014.

\bibitem{Cham15}
A. Chambolle and C. Dossal, 
On the convergence of the iterates of the ``Fast iterative
shrinkage/thresholding algorithm'', 
{\em J. Optim. Theory Appl.,}
vol. 166, pp. 968--982, 2015.

\bibitem{Cham11}
A. Chambolle and T. Pock, 
A first-order primal-dual algorithm for convex problems with 
applications to imaging,
{\em J. Math. Imaging Vision,}
vol. 40, pp. 120--145, 2011.

\bibitem{Chau13}
C. Chaux, M. El-Gheche, J. Farah, J.-C. Pesquet, and B. 
Pesquet-Popescu, 
A parallel proximal splitting method for disparity estimation from 
multicomponent images under illumination variation,
{\em J. Math. Imaging Vision,}
vol. 47, pp. 167--178, 2013. 

\bibitem{Chen94}
G. Chen and M. Teboulle,
A proximal-based decomposition method for convex minimization 
problems,
{\em Math. Program.,}
vol. 64, pp. 81--101, 1994.

\bibitem{Sign94} 
P. L. Combettes, 
Inconsistent signal feasibility problems:
Least-squares solutions in a product space,
{\em IEEE Trans. Signal Process.,}
vol. 42, pp. 2955--2966, 1994.

\bibitem{Opti04}
P. L. Combettes, 
Solving monotone inclusions via compositions of nonexpansive
averaged operators,
{\em Optimization,} 
vol. 53, pp. 475--504, 2004.

\bibitem{Joca09}
P. L. Combettes, Iterative construction of the resolvent of a
sum of maximal monotone operators,
{\em J. Convex Anal.,} 
vol. 16, pp. 727--748, 2009.

\bibitem{Icip14}
P. L Combettes, L. Condat, J.-C. Pesquet, and B. C. V\~u,
A forward-backward view of some primal-dual optimization 
methods in image recovery,
{\em Proc. IEEE Int. Conf. Image Process.}, 
Paris, France, Oct. 27--30, 2014, pp. 4141--4145.

\bibitem{Svva10}
P. L. Combettes, Dinh D\~ung, and B. C. V\~u, 
Dualization of signal recovery problems,
{\em Set-Valued Var. Anal.,}
vol. 18, pp. 373--404, 2010.

\bibitem{MaPr18}
P. L. Combettes and J. Eckstein,
Asynchronous block-iterative primal-dual decomposition methods
for monotone inclusions,
{\em Math. Program.,}
vol. B168, pp. 645--672, 2018.

\bibitem{Jmaa18}
P. L. Combettes and C. L. M\"uller, 
Perspective functions: Proximal calculus and applications 
in high-dimensional statistics,
{\em J. Math. Anal. Appl.,}
vol. 457, pp. 1283--1306, 2018.

\bibitem{Joca16}
P. L. Combettes and Q. V. Nguyen,
Solving composite monotone inclusions in reflexive Banach spaces 
by constructing best Bregman approximations from their 
Kuhn-Tucker set,
{\em J. Convex Anal.,}
vol. 23, pp. 481--510, 2016. 

\bibitem{Siop07}
P. L. Combettes and J.-C. Pesquet,
Proximal thresholding algorithm for minimization over 
orthonormal bases,
{\em SIAM J. Optim.,}
vol. 18, pp. 1351--1376, 2007.

\bibitem{Jsts07}
P. L. Combettes and J.-C. Pesquet,
A Douglas-Rachford splitting approach to nonsmooth 
convex variational signal recovery,
{\em IEEE J. Selected Topics Signal Process.,}
vol. 1, pp. 564--574, 2007.

\bibitem{Banf11}
P. L. Combettes and J.-C. Pesquet, 
Proximal splitting methods in signal processing,
in \emph{Fixed-Point Algorithms for Inverse Problems in Science 
and Engineering,}
pp. 185--212. Springer, New York, 2011.

\bibitem{Svva12}
P. L. Combettes and J.-C. Pesquet, 
Primal-dual splitting algorithm for solving
inclusions with mixtures of composite, Lipschitzian, and 
parallel-sum type monotone operators,
{\em Set-Valued Var. Anal.},
vol. 20, pp. 307--330, 2012.

\bibitem{Save18}
P. L. Combettes, S. Salzo, and S. Villa, 
Consistent learning by composite proximal thresholding,
{\em Math. Program.,}
vol. B167, pp. 99--127, 2018. 

\bibitem{Opti14}
P. L. Combettes and B. C. V\~u,
Variable metric forward-backward splitting with applications 
to monotone inclusions in duality,
{\em Optimization,}
vol. 63, pp. 1289--1318, 2014.

\bibitem{Smms05}
P. L. Combettes and V. R. Wajs, 
Signal recovery by proximal forward-backward splitting,
{\em Multiscale Model. Simul.,} 
vol. 4, pp. 1168--1200, 2005.

\bibitem{Jmaa15}
P. L. Combettes and I. Yamada, 
Compositions and convex combinations of averaged nonexpansive
operators,
{\em J. Math. Anal. Appl.,}
vol. 425, pp. 55--70, 2015.

\bibitem{Cond13}
L. Condat, 
A primal-dual splitting method for convex optimization involving
Lipschitzian, proximable and linear composite terms,
{\em J. Optim. Theory Appl.,}
vol. 158, pp. 460--479, 2013. 

\bibitem{Davi17}
D. Davis and W. Yin, 
A three-operator splitting scheme and its optimization
applications,
{\em Set-Valued Var. Anal.,}
vol. 25, pp, 829--858, 2017.

\bibitem{Duch09}
J. Duchi and Y. Singer, 
Efficient online and batch learning using forward backward 
splitting,
{\em J. Mach. Learn. Res.,}
vol. 10, pp. 2899--2934, 2009.

\bibitem{Ecks94}
J. Eckstein,
Some saddle-function splitting methods for convex programming,
{\em Optim. Methods Softw.,}
vol. 4, pp. 75--83, 1994.

\bibitem{Ecks92} 
J. Eckstein and D. P. Bertsekas, 
On the Douglas-Rachford splitting method and the proximal 
point algorithm for maximal monotone operators,
{\em Math. Program.,}
vol. 55, pp. 293--318, 1992.

\bibitem{Gaba83} 
D. Gabay,
Applications of the method of multipliers to variational 
inequalities, in: M. Fortin and R. Glowinski (eds.), 
{\em Augmented Lagrangian Methods: Applications to the 
Numerical Solution of Boundary Value Problems,}
pp. 299--331. North-Holland, Amsterdam, 1983.

\bibitem{Ghou09}
N. Ghoussoub, 
{\em Self-Dual Partial Differential Systems and Their 
Variational Principles.}
Springer, New York, 2009.

\bibitem{Glow89} 
R. Glowinski and P. Le Tallec,
{\em Augmented Lagrangian and Operator-Splitting Methods in 
Nonlinear Mechanics.}
Philadelphia: SIAM, 1989.

\bibitem{Glow75} 
R. Glowinski and A. Marrocco, 
Sur l'approximation, par \'el\'ements finis d'ordre un, 
et la r\'esolution, par p\'enalisation-dualit\'e, d'une classe de 
probl\`emes de Dirichlet non lin\'eaires,
{\em RAIRO Anal. Numer.,}
vol. 2, pp. 41--76, 1975.

\bibitem{Glow16} 
R. Glowinski, S. J. Osher, and W. Yin (eds.),
{\em Splitting Methods in Communication, Imaging, Science, and 
Engineering.}
Springer, New York, 2016.

\bibitem{Golo35} 
M. Golomb,
Zur Theorie der nichtlinearen Integralgleichungen, 
Integralgleichungssysteme und allgemeinen Funktionalgleichungen,
{\em Math. Z.,}
vol. 39, pp. 45--75, 1935.

\bibitem{Gule91} 
O. G\"uler,
On the convergence of the proximal point algorithm for convex 
minimization,
{\em SIAM J. Control Optim.}, 
vol. 29, pp. 403--419, 1991.

\bibitem{Xiao12}
B. He and X. Yuan, 
Convergence analysis of primal-dual algorithms for a saddle-point
problem: From contraction perspective,
{\em SIAM J. Imaging Sci.,}
vol. 5, pp. 119--149, 2012.

\bibitem{Hund04} 
H. S. Hundal,
An alternating projection that does not converge in norm,
{\em Nonlinear Anal.}, 
vol. 57, pp. 35--61, 2004.

\bibitem{Idri89} 
H. Idrissi, O. Lefebvre, and C. Michelot, 
Applications and numerical convergence of the partial inverse
method, 
{\em Lecture Notes in Math.,}
vol. 1405, pp. 39--54, 1989. 

\bibitem{Jena11} 
R. Jenatton, J. Mairal, G. Obozinski, and F. Bach,
Proximal methods for hierarchical sparse coding,
{\em J. Machine Learning Res.,}
vol. 12, pp. 2297--2334, 2011.

\bibitem{Jens06}
J. L. W. V. Jensen,
Sur les fonctions convexes et les in\'egalit\'es entre les 
valeurs moyennes,
{\em Acta Math.,}
vol. 30, pp. 175--193, 1906.

\bibitem{Kach60}  
R. I. Ka{\v{c}}urovski{\u\i},
Monotone operators and convex functionals,
{\em Uspekhi Mat. Nauk,}
vol. 15, pp. 213--215, 1960.

\bibitem{Kach68}  
R. I. Ka{\v{c}}urovski{\u\i},
Nonlinear monotone operators in Banach spaces,
{\em Uspekhi Mat. Nauk,}
vol. 23, pp. 121--168, 1968;  
English translation: {\em Russian Math. Surveys,}
vol. 23, pp. 117--165, 1968.

\bibitem{Komu67} 
Y. K\={o}mura,
Nonlinear semi-groups in Hilbert space,
{\em J. Math. Soc. Japan,}
vol. 19, pp. 493--507, 1967.

\bibitem{Lawr87} 
J. Lawrence and J. E. Spingarn,
On fixed points of non-expansive piecewise isometric mappings,
{\em Proc. London Math. Soc.,}
vol. 55, pp. 605--624, 1987. 

\bibitem{Lema89} 
B. Lemaire, 
The proximal algorithm, in:
{\em New methods in Optimization and Their Industrial Uses,}
(J. P. Penot, Ed.), {International Series of Numerical Mathematics,}
vol. 87, pp.~73--87. Birkh\"{a}user, Boston, MA, 1989.

\bibitem{Leno17} 
A. Lenoir and Ph. Mahey,
A survey on operator splitting and decomposition of convex
programs,
{\em RAIRO-Oper. Res.,}
vol. 51, pp. 17--41, 2017.

\bibitem{Lera65}
J. Leray and J.-L. Lions, 
Quelques r\'esultats de Vi\v{s}ik sur les probl\`emes elliptiques
nonlin\'eaires par les m\'ethodes de Minty-Browder,
{\em Bull. Soc. Math. France,}
vol. 93, pp. 97--107, 1965.

\bibitem{Levi06}
B. Levi, 
Sul principio di Dirichlet,
{\em Rend. Circ. Mat. Palermo,}
vol. 22, pp. 293--359, 1906.

\bibitem{Levi66}
E. S. Levitin and B. T. Polyak, 
Constrained minimization methods,
{\em U.S.S.R. Comput. Math. Math. Phys.},
vol. 6, pp. 1--50, 1966.

\bibitem{Lion79} 
P.-L. Lions and B. Mercier, 
Splitting algorithms for the sum of two nonlinear operators,
{\em SIAM J. Numer. Anal.},
vol. 16,  pp. 964--979, 1979.

\bibitem{Mahe95}
Ph. Mahey, S. Oualibouch, and Pham Dinh Tao,
Proximal decomposition on the graph of a maximal monotone operator,
{\em SIAM J. Optim.,}
vol. 5, pp. 454--466, 1995.

\bibitem{Mart70} 
B. Martinet, 
R\'egularisation d'in\'equations variationnelles par
approximations successives,
{\em Rev. Fr. Inform. Rech. Oper.,}
vol. 4, pp. 154--158, 1970.

\bibitem{Mart72} 
B. Martinet, 
D\'etermination approch\'ee d'un point fixe d'une application 
pseudo-contractante. Cas de l'application prox,
{\em C. R. Acad. Sci. Paris,}
vol. A274, pp. 163--165, 1972.

\bibitem{Merc79} 
B. Mercier, 
{\em Topics in Finite Element Solution of Elliptic Problems}
(Lectures on Mathematics, no. 63).
Tata Institute of Fundamental Research, Bombay, 1979.

\bibitem{Mint62}
G. J. Minty, 
Monotone (nonlinear) operators in Hilbert space,
{\em Duke Math. J.},
vol. 29, pp. 341--346, 1962. 

\bibitem{Mint63}
G. J. Minty, 
On a ``monotonicity'' method for the solution of nonlinear
equations in Banach spaces, 
{\em Proc. Natl. Acad. Sci. USA,} 
vol. 50, pp. 1038--1041, 1963.

\bibitem{Mint64}
G. J. Minty, 
On the monotonicity of the gradient of a convex
function,
{\em Pac. J. Math.,}
vol. 14, pp. 243--247, 1964.

\bibitem{Mor62a} 
J. J. Moreau, 
D\'ecomposition orthogonale d'un
espace hilbertien selon deux c\^ones mutuellement polaires,
{\em C. R. Acad. Sci. Paris S\'er. A Math.,}
vol. 255, pp. 238--240, 1962.

\bibitem{Mor62b} 
J. J. Moreau, 
Fonctions convexes duales et points proximaux dans un 
espace hilbertien,
{\em C. R. Acad. Sci. Paris,}
vol. A255, pp. 2897--2899, 1962.

\bibitem{Mor63m} J. J. Moreau, 
Les liaisons unilat\'erales et le principe de Gauss,
{\em C. R. Acad. Sci. Paris,}
vol. A256, pp. 871--874, 1963.

\bibitem{Mor63a} J. J. Moreau, 
Propri\'et\'es des applications ``prox'',
{\em C. R. Acad. Sci. Paris,}
vol. A256, pp. 1069--1071, 1963.

\bibitem{Mor63c} 
J. J. Moreau, 
Fonctionnelles sous-diff\'erentiables,
{\em C. R. Acad. Sci. Paris,}
vol. A257, pp. 4117--4119, 1963.

\bibitem{Mor64m} 
J. J. Moreau, 
Sur la naissance de la cavitation dans une conduite,
{\em C. R. Acad. Sci. Paris,}
vol. A259, pp. 3948--3950, 1964.

\bibitem{More65} 
J. J. Moreau,
Proximit\'e et dualit\'e dans un espace hilbertien,
{\em Bull. Soc. Math. France,}
vol. 93, pp. 273--299, 1965.

\bibitem{More66} 
J. J. Moreau,
Quadratic programming in mechanics: Dynamics of one-sided 
constraints,
{\em SIAM J. Control,}
vol. 4, pp. 153--158, 1966.

\bibitem{Nguy17}
Q. V. Nguyen,
Forward-backward splitting with Bregman distances,
{\em Vietnam J. Math.,}
vol. 45, pp. 519--539, 2017. 

\bibitem{Opia67} 
Z. Opial, 
Weak convergence of the sequence of successive approximations for 
nonexpansive mappings,
{\em Bull. Amer. Math. Soc.,}
vol. 73, pp. 591--597, 1967.

\bibitem{Papa14} 
N. Papadakis, G. Peyr\'e, and E. Oudet,
Optimal transport with proximal splitting,
{\em SIAM J. Imaging Sci.,} 
vol. 7, pp. 212--238, 2014.

\bibitem{Phel93}
R. R. Phelps,
{\em Convex Functions, Monotone Operators and Differentiability,}
2nd ed. Springer-Verlag, Berlin, 1993.

\bibitem{Ragu13} 
H. Raguet, J. Fadili, and G. Peyr\'e, 
A generalized forward-backward splitting,
{\em SIAM J. Imaging Sci.,}
vol. 6, pp. 1199--1226, 2013.

\bibitem{Rock63} 
R. T. Rockafellar, 
{\em Convex Functions and Dual Extremum Problems.}
Ph.D. thesis, Harvard University, 1963.

\bibitem{Rock66} 
R. T. Rockafellar, 
Characterization of the subdifferentials of convex functions,
{\em Pac. J. Math.,}
vol. 17, pp. 497--510. 1966.

\bibitem{Rock70} 
R. T. Rockafellar, 
Monotone operators associated with saddle-functions and minimax
problems, in:
{\em Nonlinear Functional Analysis, Part 1,} 
(F. E. Browder, ed.), pp. 241--250.
AMS, Providence, RI, 1970.

\bibitem{Roc71d} 
R. T. Rockafellar, 
Saddle points and convex analysis, 
in: {\em Differential Games and Related Topics,}
H. W. Kuhn and G. P. Szeg\"o (eds.), pp. 109--127.
North-Holland, New York, 1971.

\bibitem{Rock74} 
R. T. Rockafellar,
{\em Conjugate Duality and Optimization.}
SIAM, Philadelphia, PA 1974.

\bibitem{Roc76a} 
R. T. Rockafellar, 
Monotone operators and the proximal point algorithm,
{\em SIAM J. Control Optim.,}
vol. 14, pp. 877--898, 1976.

\bibitem{Roc76b}
R. T. Rockafellar,
Augmented Lagrangians and applications of the proximal point 
algorithm in convex programming,
{\em Math. Oper. Res.,}
vol. 1, pp. 97--116, 1976.


\bibitem{Rock91} 
R. T. Rockafellar and R. J. B. Wets, 
Scenarios and policy aggregation in optimization under
uncertainty,
{\em Math. Oper. Res.,}
vol. 16, pp. 1--29, 1991.

\bibitem{Show97} 
R. E. Showalter, 
{\em Monotone Operators in Banach Space and Nonlinear Partial 
Differential Equations}.  
Amer. Math. Soc., Providence, RI, 1997.

\bibitem{Simo98}
S. Simons,
{\em From Hahn-Banach to Monotonicity,} 2nd ed.
Springer-Verlag, Berlin, 2008.

\bibitem{Spin83}
J. E. Spingarn,
Partial inverse of a monotone operator, 
{\em Appl. Math. Optim.},
vol. 10, pp. 247--265, 1983.

\bibitem{Spin85}
J. E. Spingarn,
Applications of the method of partial inverses to convex 
programming: Decomposition,
{\em Math. Program.}
32, pp. 199--223, 1985.

\bibitem{Spin87} 
J. E. Spingarn, A projection method for least-squares 
solutions to overdetermined systems of linear inequalities,
{\em Linear Algebra Appl.,}
vol. 86, pp. 211--236, 1987.

\bibitem{Wrig12}
S. Sra, S. Nowozin, and S. J. Wright (eds.),
{\em Optimization for Machine Learning.}
MIT Press, Cambridge, MA, 2012.

\bibitem{Tsen91} 
P. Tseng, 
Applications of a splitting algorithm to decomposition in convex
programming and variational inequalities,
{\em SIAM J. Control Optim.,}
vol. 29, pp. 119--138, 1991.

\bibitem{Tsen00} 
P. Tseng, 
A modified forward-backward splitting method for maximal monotone 
mappings, 
{\em SIAM J. Control Optim.}, 
vol. 38, pp. 431--446, 2000.

\bibitem{Vain72}  
M. M. Va\u{\i}nberg,
{\em Variational Method and Method of Monotone Operators in the 
Theory of Nonlinear Equations.}
Nauka, Moskow, 1972. English translation: Wiley, NY, 1973.

\bibitem{Vain59}  
M. M. Va\u{\i}nberg and R. I. Ka{\v{c}}urovski{\u\i},
On the variational theory of nonlinear operators and equations,
{\em Dokl. Akad. Nauk SSSR,}
vol. 129, pp. 1199--1202, 1959.

\bibitem{Vait18} 
S. Vaiter, G. Peyr\'e, and J. Fadili,
Model consistency of partly smooth regularizers,
{\em IEEE Trans. Inform. Theory,}
vol. 64, pp. 1725--1737, 2018.

\bibitem{Bang13}
B. C. V\~u, 
A splitting algorithm for dual monotone inclusions involving 
cocoercive operators, 
{\em Adv. Comput. Math.,} 
vol. 38, pp. 667--681, 2013.

\bibitem{Yuyl13}
Y.-L. Yu, 
On decomposing the proximal map,
{\em Adv. Neural Inform. Process. Syst.,}
vol. 16, pp. 91--99, 2013.

\bibitem{Zara60} 
E. H. Zarantonello, 
Solving functional equations by contractive averaging,
Mathematical Research Center technical summary report no. 160, 
University of Wisconsin, Madison, 1960. 

\bibitem{Zara71} 
E. H. Zarantonello (ed.), 
{\em Contributions to Nonlinear Functional Analysis.}
Academic Press, New York, 1971.

\bibitem{Zara73} 
E. H. Zarantonello, 
The product of commuting conical projections is a 
conical projection,
{\em Proc. Amer. Math. Soc.,}
vol. 38, pp. 591--594, 1973.

\bibitem{Zara74} 
E. H. Zarantonello, 
L'alg\`ebre des projecteurs coniques,
{\em Lecture Notes in Econom. and Math. Systems,}
vol. 102, pp. 232--243, 1974.

\bibitem{Zara88} 
E. H. Zarantonello, 
La structure des op\'erateurs monotones,
{\em Rev. Un. Mat. Argentina,}
vol. 34, pp. 90--96, 1988.

\bibitem{Zei90B} 
E. Zeidler,
{\em Nonlinear Functional Analysis and Its Applications II/B --
Nonlinear Monotone Operators}.
Springer-Verlag, New York, 1990.

\bibitem{Zeng14} 
X. Zeng and M. Figueiredo,
Solving OSCAR regularization problems by fast approximate
proximal splitting algorithms,
{\em Digital Signal Process.,} 
vol. 31, pp. 124--135, 2014.

\end{thebibliography}
\end{document}